\theoremstyle{definition}
\numberwithin{equation}{section}
\theoremstyle{plain}
\newtheorem{lemma}{Lemma}[section]
\newtheorem{proposition}[lemma]{Proposition}
\newtheorem{theorem}[lemma]{Theorem}
\newtheorem{corollary}[lemma]{Corollary}
\theoremstyle{definition}
\newtheorem{definition}[lemma]{Definition}
\newenvironment{assumption}[1]{\assumptionalt}{\endassumptionalt}
\newtheorem{remark}[lemma]{Remark}
\newtheorem{example}[lemma]{Example}
\DeclareMathOperator{\supp}{supp}
\DeclareMathOperator{\Var}{Var}
\DeclareMathOperator{\E}{{\mathbb E}}
\DeclareMathOperator{\R}{{\mathbb R}}
\DeclareMathOperator{\N}{{\mathbb N}}
\DeclareMathOperator{\PP}{{\mathbb P}}
\providecommand{\eps}{\varepsilon}
\renewcommand{\phi}{\varphi}
\renewcommand{\theta}{\vartheta}
\renewcommand{\subset}{\subseteq}
\providecommand{\abs}[1]{\lvert #1 \rvert}
\providecommand{\norm}[1]{\lVert #1 \rVert}
\providecommand{\babs}[1]{{\Bigl\lvert #1 \Bigr\rvert}}
\providecommand{\scapro}[2]{\langle #1,#2 \rangle}
\let\scr\mathscr     
\begin{document}


\title{Parameter estimation for the stochastic \\ heat equation with multiplicative noise\\ from local measurements}

\author{Josef Jan\'ak\\ Karlsruhe Institut f\"ur Technologie,  Germany\\ Email: josefjanak@seznam.cz \and Markus Rei\ss\\ Humboldt-Universit\"at zu Berlin, Germany\\Email: mreiss@math.hu-berlin.de}

\maketitle

\begin{abstract}
For the stochastic heat equation with multiplicative noise we consider the problem of estimating the diffusivity parameter in front of the Laplace operator. Based on local observations in space, we first study an estimator, derived in \cite{AR} for additive noise. A stable central limit theorem shows that this estimator is consistent and asymptotically mixed normal. By taking into account the quadratic variation, we propose two new estimators. Their limiting distributions exhibit a smaller (conditional) variance and the last estimator also works for vanishing noise levels. The proofs are based on local approximation results to overcome the intricate nonlinearities and on a stable central limit theorem for stochastic integrals with respect to cylindrical Brownian motion. Simulation results illustrate the theoretical findings.
\end{abstract}

\noindent {\it Keywords:}
Local measurements; stochastic partial differential equation;
 multiplicative noise;  drift estimation; augmented MLE; martingale representation theorem; stable limit theorem.\\
2020 MSC: Primary 60H15, 60F05; secondary 62G05, 35J15.



\section{Introduction}

We consider estimation of the diffusivity parameter $\theta>0$ in the stochastic heat equation with multiplicative noise
\begin{equation} \label{equation SPDE}
\begin{cases}
dX(t) &= \vartheta \Delta X(t) \, dt + \sigma (X(t)) \, dW(t), \quad 0 < t \leq T, \\
X(0) &= X_0, \\
X(t)|_{\partial \Lambda} &= 0, \quad 0 < t \leq T.
\end{cases}
\end{equation}
Here, $W$ is a cylindrical Brownian motion with values in $L^2(\Lambda)$, where $\Lambda$ is an open bounded interval in $\R$, $dW(t)/dt$ is also referred to as space-time white noise. The function $\sigma:\R\to\R_+$ generates a multiplicative noise, see Section \ref{sec:model} below for precise assumptions. Multiplicative noise appears naturally in sto\-chas\-tic partial differential equations (SPDEs) as a scaling limit or to ensure positivity of the solution, see e.g. the examples given in \cite{MP}, \cite{PT} or \cite{BHN}.

Diffusivity estimation has emerged as a benchmark inference problem for SPDEs. The spectral estimation approach, initiated by \cite{HR}, has been shown to give reliable estimation results even for more general semi-linear equations like the stochastic Navier-Stokes equation \cite{CG}, yet always assuming additive noise. In \cite{CL} a specific case of multiplicative noise has been treated which leads to geometric Brownian motions in the spectral decomposition of the Laplacian. In \cite{CCG} also Bayesian estimators have been developed and analysed in this setting.

Similarly, discrete observations of the solution $X$ in time and space give rise to realised $p$-variation estimators for quite general classes of SPDEs. Most notably, in \cite{PT} a precise convergence analysis of $p$-variation of $X(t,x)$ in space $x$ with $p=2$ and in time $t$ with $p=4$ is given, which leads to a consistent diffusivity estimator in the multiplicative noise case, while convergence rates or asymptotic normality are not considered. Estimation of the multiplicative noise function $\sigma(\cdot)$ from discrete observations is treated in \cite{C} with intriguing phenomena arising in central limit theorems for $p$-variations.

Recently, methods for local observations in space  have provided a new methodology for linear and semi-linear  SPDEs with additive noise \cite{AR,ACP}. This has enabled the estimation of diffusivity in a stochastic cell motility model from experimental data \cite{ABJR}. The underlying SPDE with additive noise describes chemical concentrations, for which, however, a multiplicative noise structure might be more natural as well as more in line with the empirical data than additive noise.

Starting point of our work is the question whether the additive noise estimator (ANE) derived in \cite{AR} for local observations of a stochastic heat equation with additive noise is robust against a multiplicative noise misspecification the same way, as it is against nonlinear reaction terms \cite{ACP}. Technically, we cannot use a splitting technique to separate the nonlinear from the linear part and we must derive new  tools to analyse the estimation error. This is achieved by a stepwise disentanglement and localisation of the statistics, carried out in Proposition~\ref{prop:K} below. The result is that the estimator has the same rate as for additive noise, but it is asymptotically mixed normal under stable convergence with a suboptimal conditional variance for varying $\sigma(\cdot)$.

Therefore we improve the ANE by taking into account the varying quadratic variation of the martingale term in the ANE. The multiplicative noise estimator (MNE) obtained this way satisfies a central limit theorem with smaller variance provided the multiplicative noise $\sigma(\cdot)$ is bounded away from zero. Since in many cases it is natural that $\sigma(\cdot)$ vanishes at some boundary values, we improve the MNE to the stabilised multiplicative noise estimator (SMNE), satisfying a stable central limit theorem with small conditional variance  even when $\sigma(\cdot)$ vanishes sometimes.

The exact setting is introduced in Section \ref{sec:model}. The construction of the estimators, the main asymptotic results and an application to confidence intervals are presented in Section \ref{sec:mainresults}. In Section \ref{sec:simulation} we discuss the implementation of the estimators and their behaviour for three fundamentally different  noise specifications. The detailed proofs are delegated to Section \ref{sec:proofs}. The stable convergence results require a martingale representation theorem in terms of cylindrical Brownian motion and rely on asymptotic orthogonality of martingales by spatial localisation, which might be of independent interest. This material is therefore gathered in Section \ref{sec:stableCLT}.

\section{The model}\label{sec:model}

\subsection{Notations}

We write $\mathbb R_+ := [0, \infty)$, $a \wedge b := \min (a,b)$ and $a \vee b := \max (a,b)$. By $A_\delta \lesssim B_\delta$ we mean that there exists some constant $C > 0$ such that $A_\delta \leq C B_\delta$ for all values $\delta$ under consideration. Here, we work with $\delta\in(0,1)$ or with the convergence $\delta\to 0$. Convergence in probability and convergence in distribution are denoted by $\stackrel{\mathbb P}{\rightarrow}$ and $\stackrel{d}{\rightarrow}$, respectively. The symbol $\xrightarrow{stably}$ denotes  stable convergence, see e.g. \cite{JS}, Chapter VIII. Section 5. We say that $X_\delta\xrightarrow{stably} X$ holds on an event $G$ if $X_\delta{\bf 1}_G \xrightarrow{stably} X{\bf 1}_G$. The symbol $A_\delta = O_{\mathbb P}(B_\delta)$ for random variables $A_\delta,B_\delta$  means that $A_\delta / B_\delta$ is tight, that is, $\sup_\delta \mathbb P (|A_\delta| > C |B_\delta|) \rightarrow 0$ as $C \rightarrow \infty$. The notation $A_\delta = o_{\mathbb P}(B_\delta)$ stands for $A_\delta / B_\delta \stackrel{\mathbb P}{\rightarrow} 0$ as $\delta\to 0$.

Let $\Lambda$ be an open bounded interval in $\mathbb R$ and consider the space $L^2(\Lambda)$ equipped with the usual $L^2$-norm $\| \cdot \| := \| \cdot \|_{L^2(\Lambda)}$ and the scalar product $\left\langle \cdot, \cdot \right\rangle := \left\langle \cdot, \cdot \right\rangle_{L^2(\Lambda)}$. The space $C_b(\R)$ of all continuous bounded functions on $\R$ is equipped with the supremum norm $\| \cdot \|_{\infty}$. $H^s(\Lambda)$ denotes the $L^2$-Sobolev space of order $s$ on $\Lambda$ and $H_0^1(\Lambda)$ the space of all $f\in H^1(\Lambda)$ with $f(x)=0$ for $x\in\partial\Lambda$. We use the standard Laplace operator notation $\Delta z = z''$ for $z\in H^2(\R)$, even in the simple one-dimensional case.

\subsection{The stochastic heat equation}

Let $(\Omega, \scr F, (\scr F_t)_{0 \leq t \leq T}, \mathbb P)$ be a stochastic basis equipped with the cylindrical Brownian motion $W$ taking values in $L^2(\Lambda)$. The filtration $(\scr F_t)_{0 \leq t \leq T}$ is assumed to be generated by the cylindrical Brownian motion and augmented by $\mathbb P$--null sets.
We study the stochastic heat equation \eqref{equation SPDE} with  multiplicative noise.
The initial value $X_0 \in L^2(\Lambda)$ is supposed to be deterministic and continuous on $\bar\Lambda$. We require throughout the following two assumptions.

\begin{assumption}{(S)}\label{ass:sigma}
The function $\sigma : \mathbb R \rightarrow \mathbb R_+$ is continuous.
\end{assumption}


The stochastic term $\sigma (X(t)) \, dW(t)$ is therefore understood as $B(X(t))dW(t)$ with the multiplicative Nemytskii operators $B(u): L^2(\Lambda) \rightarrow L^2(\Lambda)$, defined by
$$
\left( B(u) v \right)(x) = \sigma (u(x)) v(x), \quad x \in \Lambda, \, u\in C(\bar\Lambda),\, v \in L^2(\Lambda),
$$
noting that $\sigma(u(\cdot))\in L^\infty(\Lambda)$ holds for continuous and thus bounded functions $u\in C(\bar\Lambda)$.

\begin{assumption}{(X)}\label{ass:X}
The stochastic partial differential equation \eqref{equation SPDE} admits a unique weak solution $(X(t), 0 \leq t \leq T)$ taking values in $L^2(\Lambda)$, which  is continuous in both (time and space) variables, i.e., $X \in C([0,T]; C(\bar\Lambda; \mathbb R))$ $\mathbb P$--a.s., and satisfies for any $z \in H_0^1(\Lambda) \cap H^2(\Lambda)$ and $t \in [0,T]$
\begin{equation}
\left\langle X(t), z \right\rangle = \left\langle X_0, z \right\rangle + \vartheta \int_0^t \left\langle X(s), \Delta z \right\rangle \, ds + \int_0^t \left\langle z, \sigma(X(s)) \, dW(s) \right\rangle, \quad \mathbb P-a.s.
\end{equation}
\end{assumption}

Sufficient conditions for Assumption \ref{ass:X}  will be discussed  in Example \ref{ex:XSprime} below.
In our setup a weak solution is also a mild solution (see e.g. Theorem 6.5 in \cite{DPZ}). If $(S_{\vartheta}(t), t \geq 0)$ is the strongly continuous semigroup on $L^2(\Lambda)$ generated by $\vartheta \Delta$, then the solution $(X(t), 0 \leq t \leq T)$ to equation \eqref{equation SPDE} is given by the variation-of-constants formula
\begin{equation} \label{mild solution}
X(t) = S_{\vartheta}(t) X_0 + \int_0^t S_{\vartheta}(t-s) \sigma(X(s)) \, dW(s), \quad \mathbb P-a.s.
\end{equation}

\subsection{The observation scheme}

As motivated in \cite{AR,ABJR}, we observe the solution process $(X(t,x),\,t\in[0,T],x\in\Lambda)$ only locally in space around some point $x_0\in\Lambda$. That point $x_0$ as well as the terminal time $T\in(0,\infty)$ remain fixed.

More precisely, the observations are given by a spatial convolution of the solution process with a kernel $K_{\delta,x_0}$, localising at $x_0$ as the resolution $\delta$ tends to zero. This kernel might for instance model the {\it point spread function} in microscopy.

For $z \in L^2(\mathbb R)$ and $\delta \in(0,1)$ introduce the scalings
\begin{align*}
\Lambda_{\delta, x_0} &:= \delta^{-1} \left( \Lambda - x_0 \right) = \{ \delta^{-1} (x - x_0) : x \in \Lambda \}, \\
z_{\delta, x_0}(x) &:= \delta^{-1/2} z\left( \delta^{-1} (x - x_0) \right), \quad x \in \mathbb R.
\end{align*}
Throughout this paper, $K \in H^2(\mathbb R)$ denotes a fixed function of compact support in $\Lambda_{1, x_0}$, called kernel. The compact support ensures that $K_{\delta, x_0}$ is localising around $x_0$ as $\delta\to 0$ and that $K_{\delta, x_0} \in H_0^1(\Lambda) \cap H^2(\Lambda)$. The scaling with $\delta^{-1/2}$ simplifies calculations due to $\| K_{\delta, x_0} \| = \| K \|_{L^2(\mathbb R)}$, while  the basic estimators are invariant with respect to kernel scaling.

Local measurements of \eqref{equation SPDE} at the point $x_0$ with  resolution level $\delta\in(0,1)$ are described by the real-valued processes $(X_{\delta, x_0}(t), 0 \leq t \leq T)$ and $(X_{\delta, x_0}^{\Delta}(t), 0 \leq t \leq T)$ given by
\begin{align}
X_{\delta, x_0}(t) &= \left\langle X(t), K_{\delta, x_0} \right\rangle, \label{X-delta} \\
X_{\delta, x_0}^{\Delta}(t) &= \left\langle X(t), \Delta K_{\delta, x_0} \label{X-delta-delta} \right\rangle.
\end{align}
 The process $(X_{\delta, x_0}(t), 0 \leq t \leq T)$ satisfies $X_{\delta, x_0} (0) = \left\langle X_0, K_{\delta, x_0} \right\rangle$ and by partial integration
\begin{align}
dX_{\delta, x_0} (t) &= \vartheta X_{\delta,x_0}^\Delta(t) \, dt + \left\langle \sigma (X(t)) K_{\delta, x_0} , \, dW(t) \right\rangle. \label{dX-delta}
\end{align}

\section{Estimation methods and main results} \label{sec:mainresults}

\subsection{The additive noise estimator}

We study first the augmented maximum likelihood estimator  $\hat{\vartheta}_{\delta}$ from \cite{AR}, derived for the stochastic heat equation with additive space-time white noise.

\begin{definition}
The additive noise estimator (ANE) $\hat\vartheta_{\delta}$ of the parameter $\vartheta > 0$ is defined as
\begin{equation}
\hat\vartheta_{\delta} = \frac{\int_0^T X_{\delta, x_0}^{\Delta} (t) \, dX_{\delta, x_0} (t)}{\int_0^T ( X_{\delta, x_0}^{\Delta} (t) )^2 \, dt}.
\end{equation}
\end{definition}

According to \eqref{dX-delta}, the numerator $\int_0^T X_{\delta, x_0}^{\Delta} (t) \, dX_{\delta, x_0} (t)$ equals
\[
  \vartheta \int_0^T \left( X_{\delta, x_0}^{\Delta} (t) \right)^2 \, dt
+ \int_0^T X_{\delta, x_0}^{\Delta} (t) \left\langle \sigma (X(t)) K_{\delta, x_0}, dW(t) \right\rangle
\]
and the fundamental error decomposition  is given by
\begin{equation} \label{eq:errorforhat}
\delta^{-1} ( \hat{\vartheta_{\delta}} - \vartheta ) = \frac{\mathcal M_{\delta}}{\mathcal I_{\delta}^{1/2}} \cdot \frac{\left( \delta^2 \mathcal I_{\delta} \right)^{1/2}}{\delta^2 \mathcal J_{\delta}},
\end{equation}
where
\begin{align*}
\mathcal M_{\delta} &= \int_0^T X_{\delta, x_0}^{\Delta} (t) \left\langle \sigma (X(t)) K_{\delta, x_0}, dW(t) \right\rangle, \\
\mathcal I_{\delta} &= \int_0^T \| \sigma (X(t)) K_{\delta, x_0} \|^2 \left( X_{\delta, x_0}^{\Delta} (t) \right)^2 \, dt, \\
\mathcal J_{\delta} &= \int_0^T \left( X_{\delta, x_0}^{\Delta} (t) \right)^2 \, dt.
\end{align*}
The term $\mathcal I_{\delta}$ is incorporated because it gives the quadratic variation of the martingale $\mathcal M_{\delta}$ in time. It turns out that $\delta^2{\mathcal I_{\delta}}$  converges in probability to the
limit $(2\theta)^{-1}\| K' \|_{L^2(\mathbb R)}^2 \| K \|_{L^2(\mathbb R)}^2 \int_0^T \sigma^4 (X(t, x_0)) \, dt$ as $\delta\to 0$, while
$\delta^2\mathcal J_{\delta}$ converges to $(2\theta)^{-1}\| K' \|_{L^2(\mathbb R)}^2 \int_0^T \sigma^2 (X(t, x_0)) \, dt$, compare Proposition \ref{convergence of IJ_delta} below for bounded $\sigma(\cdot)$. Since the quadratic variation $\mathcal I_{\delta}$ does not become asymptotically deterministic, we cannot rely on a standard martingale central limit theorem (e.g. Theorem 1.19 in \cite{Ku}) to prove asymptotic normality of $\mathcal M_{\delta}/\mathcal I_{\delta}^{1/2}$.

Therefore we employ the concept of stable convergence, which is stronger than convergence in distribution and allows to formulate mixed normal limits and to derive feasible confidence intervals, see e.g. \cite{JS} for a general introduction. In Section \ref{sec:stableCLT} we prove a general martingale representation theorem and a stable limit theorem for martingales with respect to cylindrical Brownian motion filtrations. As a consequence, we obtain the following result, when specialising Corollary \ref{coroll:supp} to our setting involving the kernels $K_{\delta,x_0}$:

\begin{proposition}\label{prop:StableCLT}
Let $(Y_\delta(t), 0\le t\le T)$ for $\delta\in(0,1)$ be  $L^2(\Lambda)$-valued processes, progressively measurable with respect to the cylindrical Brownian filtration $({\scr F}_t)_{0\le t\le T}$ and satisfying $\int_0^T \| Y_\delta(t) \|^2 \, dt < \infty$. If
\begin{itemize}
\item[(C1)] $\int_0^T \| Y_\delta(t) \|^2 \, dt \stackrel{\mathbb P}{\rightarrow} \int_0^T s^2(t) \, dt$ as $\delta \rightarrow 0$ for some progressively measurable real-valued process $(s(t),0\le t\le T)$ with $\int_0^T s^2(t) \, dt<\infty$,
\item[(C2')] the support inclusion $\supp(Y_\delta(t))\subset \supp(K_{\delta,x_0})$ holds Lebesgue-almost everywhere in $\Lambda$ for all $t\in[0,T]$,
\end{itemize}
then a stable limit theorem for the stochastic integrals holds as $\delta\to 0$:
$$
\int_0^T \left\langle Y_\delta(t), dW(t) \right\rangle \xrightarrow{stably}  \int_0^T s(t) \, dB(t)
$$
with an independent scalar Brownian motion $(B(t), 0\le t\le T)$ (on an extension of the original filtered probability space).
\end{proposition}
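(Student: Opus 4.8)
The plan is to deduce this from the general stable limit theorem for stochastic integrals with respect to the cylindrical Brownian motion $W$ announced as Corollary~\ref{coroll:supp} in Section~\ref{sec:stableCLT}; the work here is to verify that its hypotheses are implied by (C1) and (C2'). Recall that such a theorem typically requires two ingredients: a convergence of the quadratic variation $\int_0^T \|Y_\delta(t)\|^2\,dt$ to a limiting variance process, and an \emph{asymptotic orthogonality} (or asymptotic negligibility of correlation) condition ensuring the limit is conditionally Gaussian and $W$-independent. The first ingredient is exactly (C1). For the second, the key observation is that the spatial localisation encoded in (C2') forces the increments of $\mathcal M_\delta := \int_0^T\langle Y_\delta(t),dW(t)\rangle$ to be carried by the shrinking spatial window $\supp(K_{\delta,x_0})$, whose Lebesgue measure is $O(\delta)\to 0$.

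First I would fix notation: write $M_\delta := \int_0^T\langle Y_\delta(t),dW(t)\rangle$, a continuous $L^2$-martingale (in a one-parameter sense after integrating against $W$) with $\langle M_\delta\rangle_T = \int_0^T\|Y_\delta(t)\|^2\,dt$. To invoke the cylindrical stable CLT I must check that $M_\delta$ is asymptotically uncorrelated, in the appropriate martingale sense, with $\langle g,W(\cdot)\rangle$ for every fixed $g\in L^2(\Lambda)$ (or for $g$ ranging over a dense, e.g. smooth compactly supported, subset). The relevant bracket is
\[
\langle M_\delta, \langle g, W\rangle\rangle_T \;=\; \int_0^T \langle Y_\delta(t), g\rangle\,dt.
\]
By (C2') the function $Y_\delta(t)$ vanishes a.e. outside $\supp(K_{\delta,x_0})$, so by Cauchy--Schwarz
\[
\Bigl|\int_0^T \langle Y_\delta(t), g\rangle\,dt\Bigr|
\;\le\; \Bigl(\int_0^T \|Y_\delta(t)\|^2\,dt\Bigr)^{1/2}
\Bigl(T\cdot \|g\mathbf{1}_{\supp(K_{\delta,x_0})}\|^2\Bigr)^{1/2}.
\]
The first factor is $O_{\mathbb P}(1)$ by (C1), and $\|g\mathbf{1}_{\supp(K_{\delta,x_0})}\|^2\to 0$ as $\delta\to0$ by dominated convergence, since $|\supp(K_{\delta,x_0})|=\delta\,|\supp(K)|\to 0$ and $g\in L^2(\Lambda)$. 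Hence $\langle M_\delta,\langle g,W\rangle\rangle_T\stackrel{\mathbb P}{\to}0$ for each fixed $g$, which is precisely the asymptotic-orthogonality input needed. Together with (C1) this lets me apply Corollary~\ref{coroll:supp} to conclude $M_\delta \xrightarrow{stably} \int_0^T s(t)\,dB(t)$ for an independent Brownian motion $B$ on an extension, possibly after a standard localisation/stopping-time argument to reduce to the case where $\int_0^T\|Y_\delta(t)\|^2\,dt$ and $\int_0^T s^2(t)\,dt$ are uniformly bounded.

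I expect the main obstacle to be bookkeeping rather than conceptual: making the reduction to Corollary~\ref{coroll:supp} precise requires matching the exact form of its hypotheses (in particular whether it is stated with a pointwise-in-time support condition as in (C2'), or with a fixed deterministic support, and whether the orthogonality must be checked along a countable dense family to get the joint stable convergence with $W$ itself). If Corollary~\ref{coroll:supp} is already phrased with the support inclusion $\supp(Y_\delta(t))\subset\supp(K_{\delta,x_0})$, then Proposition~\ref{prop:StableCLT} is essentially a restatement and the proof reduces to quoting it; the only genuine step is then checking the measurability and integrability qualifications and the localisation. The subtle point to be careful about is that (C2') is an \emph{almost everywhere} statement separately for each $t$, so one should either assume (or arrange, via the progressive measurability) joint measurability of $(t,x)\mapsto Y_\delta(t)(x)$ so that Fubini applies to $\int_0^T\langle Y_\delta(t),g\rangle\,dt$, which is implicit in the hypothesis $\int_0^T\|Y_\delta(t)\|^2\,dt<\infty$ together with progressive measurability.
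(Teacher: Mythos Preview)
Your overall strategy is right, and so is your concluding instinct: in the paper's organisation Proposition~\ref{prop:StableCLT} really is a direct specialisation of Corollary~\ref{coroll:supp}. One simply takes $A(\delta)=[0,T]\times\supp(K_{\delta,x_0})$ (or an enclosing interval around $x_0$), observes that these sets are nested in $\delta$ with $\lambda(A(\delta))=O(\delta)\to 0$, and quotes the Corollary. The progressive measurability and integrability qualifications carry over verbatim.

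The central calculation you present, however, has a genuine gap. You assert that for the stable CLT it suffices to check $\langle M_\delta,\langle g,W\rangle\rangle_T\stackrel{\mathbb P}{\to}0$ for every \emph{fixed deterministic} $g\in L^2(\Lambda)$. That is not enough: stable convergence with an \emph{independent} limiting Brownian motion (via Jacod--Shiryaev IX.7.3, as invoked in Theorem~\ref{thm:stclt}) requires $\langle M_\delta,N\rangle_T\stackrel{\mathbb P}{\to}0$ for \emph{all} bounded martingales $N$ on the original space. By the cylindrical martingale representation of Proposition~\ref{prop:CBL}, such $N$ are stochastic integrals $\int_0^\cdot\langle F(t),dW(t)\rangle$ with $F$ progressively measurable and \emph{random}; this is condition~(C2) of Theorem~\ref{thm:stclt}. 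Checking only constant $F(t)\equiv g$ would already fail to force independence in the scalar case $H=\R$. The fix is immediate and is exactly what the paper does in proving Corollary~\ref{coroll:supp}: your Cauchy--Schwarz estimate extends verbatim to random $F$, namely
\[
\Bigl|\int_0^T\langle Y_\delta(t),F(t)\rangle\,dt\Bigr|\le\Bigl(\int_0^T\|Y_\delta(t)\|^2\,dt\Bigr)^{1/2}\Bigl(\int_0^T\|F(t)\mathbf{1}_{\supp(K_{\delta,x_0})}\|^2\,dt\Bigr)^{1/2},
\]
and the second factor tends to zero $\omega$-wise by dominated convergence since $\int_0^T\|F(t)\|^2\,dt<\infty$. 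So the content of your argument is salvageable, but the claim about which class of test martingales must be handled needs correction.
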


The main point of this result is that the limiting Brownian motion $B$ becomes independent because the support of $Y_\delta(t)$ shrinks asymptotically to the point $x_0$. Here we shall apply the proposition with $Y_\delta(t)= \delta X_{\delta, x_0}^{\Delta} (t) \sigma (X(t)) K_{\delta, x_0}$. Our first main result is that the additive noise estimator $\hat{\vartheta}_{\delta}$ satisfies a stable central limit theorem with rate $\delta$.

\begin{theorem} \label{thm:hatvartheta}
Grant Assumptions \ref{ass:sigma} and \ref{ass:X}. Then the ANE $\hat{\vartheta}_{\delta}$ satisfies on the event $\{\int_0^T\sigma^2(X(t,x_0))\,dt>0\}$
\begin{equation}\label{EqANECLT}
\delta^{-1} ( \hat{\vartheta}_{\delta} - \vartheta ) \xrightarrow{stably} \frac{(2 \vartheta)^{1/2} \| K \|_{L^2(\mathbb R)}}{\| K' \|_{L^2(\mathbb R)}} \cdot \frac{(\int_0^T \sigma^4 (X(t, x_0)) \, dt )^{1/2}}{\int_0^T \sigma^2 (X(t, x_0)) \, dt} \cdot Z
\end{equation}
as $\delta \rightarrow 0$, where $Z \sim N(0,1)$ is independent of $\scr F_T$.
\end{theorem}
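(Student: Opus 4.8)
The plan is to start from the fundamental error decomposition \eqref{eq:errorforhat},
\[
\delta^{-1} ( \hat{\vartheta}_{\delta} - \vartheta ) = \frac{\mathcal M_{\delta}}{\mathcal I_{\delta}^{1/2}} \cdot \frac{\left( \delta^2 \mathcal I_{\delta} \right)^{1/2}}{\delta^2 \mathcal J_{\delta}},
\]
and treat the three factors separately. For the second and third factors I would invoke Proposition \ref{convergence of IJ_delta} (referenced in the excerpt) to get $\delta^2 \mathcal I_\delta \xrightarrow{\PP} (2\vartheta)^{-1}\|K'\|_{L^2(\R)}^2\|K\|_{L^2(\R)}^2\int_0^T\sigma^4(X(t,x_0))\,dt$ and $\delta^2\mathcal J_\delta \xrightarrow{\PP}(2\vartheta)^{-1}\|K'\|_{L^2(\R)}^2\int_0^T\sigma^2(X(t,x_0))\,dt$. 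On the event $\{\int_0^T\sigma^2(X(t,x_0))\,dt>0\}$ the denominator limit is strictly positive, so by the continuous mapping theorem the product $(\delta^2\mathcal I_\delta)^{1/2}/(\delta^2\mathcal J_\delta)$ converges in probability to
\[
\frac{(2\vartheta)^{1/2}\|K\|_{L^2(\R)}}{\|K'\|_{L^2(\R)}}\cdot\frac{\bigl(\int_0^T\sigma^4(X(t,x_0))\,dt\bigr)^{1/2}}{\int_0^T\sigma^2(X(t,x_0))\,dt}.
\]
Since $\sigma$ need not be bounded a priori, a preliminary localisation step is needed: introduce stopping times $\tau_R = \inf\{t: \|X(t)\|_\infty \ge R\}\wedge T$; on $\{\tau_R = T\}$ the solution stays in a compact set where $\sigma$ is bounded, and by Assumption \ref{ass:X} $\PP(\tau_R = T)\to 1$ as $R\to\infty$, so it suffices to prove the statement on each such event, using a truncated $\sigma$.

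For the first factor $\mathcal M_\delta/\mathcal I_\delta^{1/2}$ I would apply the stable CLT, Proposition \ref{prop:StableCLT}, with $Y_\delta(t) = \delta X_{\delta,x_0}^\Delta(t)\,\sigma(X(t))K_{\delta,x_0}$. This is progressively measurable and square-integrable in time (using the pathwise continuity of $X$ and the boundedness on the localising event). Condition (C2') holds because $\supp(Y_\delta(t))\subset\supp(K_{\delta,x_0})$ by construction. Condition (C1) is exactly $\int_0^T\|Y_\delta(t)\|^2\,dt = \delta^2\mathcal I_\delta \xrightarrow{\PP} \int_0^T s^2(t)\,dt$ with $s(t)^2 = (2\vartheta)^{-1}\|K'\|_{L^2(\R)}^2\|K\|_{L^2(\R)}^2\sigma^4(X(t,x_0))$, which is again Proposition \ref{convergence of IJ_delta}. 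Hence $\mathcal M_\delta = \delta^{-1}\int_0^T\langle Y_\delta(t),dW(t)\rangle \xrightarrow{stably}\delta^{-1}\int_0^T s(t)\,dB(t)$ — more precisely, writing $\delta\mathcal M_\delta = \int_0^T\langle Y_\delta(t),dW(t)\rangle \xrightarrow{stably}\int_0^T s(t)\,dB(t)$ for an $\scr F_T$-independent Brownian motion $B$. Then $\mathcal M_\delta/\mathcal I_\delta^{1/2} = (\delta\mathcal M_\delta)/(\delta^2\mathcal I_\delta)^{1/2}$, and since $(\delta^2\mathcal I_\delta)^{1/2}$ converges in probability to $(\int_0^T s^2(t)\,dt)^{1/2}$, a joint-stable-convergence argument gives $\mathcal M_\delta/\mathcal I_\delta^{1/2}\xrightarrow{stably} (\int_0^T s^2\,dt)^{-1/2}\int_0^T s\,dB = Z$ with $Z\sim N(0,1)$ independent of $\scr F_T$ (the Brownian integral $\int_0^T s\,dB$ conditionally on $\scr F_T$ is $N(0,\int_0^T s^2)$ since $s$ is $\scr F_T$-measurable and $B$ is independent).

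Finally I would combine the two pieces: the first factor converges stably to $Z\sim N(0,1)$ independent of $\scr F_T$, and the product of the second and third factors converges in probability to an $\scr F_T$-measurable constant (the deterministic-looking expression involving the $\sigma$-integrals). Stable convergence is preserved under multiplication by a sequence converging in probability to an $\scr F_T$-measurable limit (this is the standard ``converging together'' property for stable convergence, e.g. as in \cite{JS}), so the product converges stably to the right-hand side of \eqref{EqANECLT}. Undoing the localisation ($R\to\infty$) then yields the claim on the full event $\{\int_0^T\sigma^2(X(t,x_0))\,dt>0\}$.

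The main obstacle is verifying condition (C1), i.e.\ the convergence $\delta^2\mathcal I_\delta \xrightarrow{\PP}\int_0^T s^2(t)\,dt$ (and the analogous one for $\mathcal J_\delta$), because this is where the nonlinearity $\sigma(X(\cdot))$ has to be disentangled from the rescaled kernel mass; this is precisely the content of Proposition \ref{convergence of IJ_delta} and ultimately of the stepwise localisation in Proposition \ref{prop:K} advertised in the introduction. Here one must replace $\sigma(X(t,x))$ by $\sigma(X(t,x_0))$ inside the spatial integral against the rapidly concentrating kernel $K_{\delta,x_0}$, control the resulting error uniformly in time using the spatial continuity of $X$, and identify the limiting kernel constants via the scaling of $K_{\delta,x_0}$ and the known short-time/high-frequency behaviour of the heat semigroup $S_\vartheta$. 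Given that proposition, the rest is a fairly routine assembly of stable-convergence facts; the only subtlety beyond it is the careful joint-stable-convergence argument needed to divide $\mathcal M_\delta$ by $\mathcal I_\delta^{1/2}$ rather than by its deterministic limit.
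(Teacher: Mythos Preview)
Your proposal is correct and follows essentially the same approach as the paper: the paper's proof likewise starts from the decomposition \eqref{eq:errorforhat}, invokes Proposition~\ref{convergence of IJ_delta}(i),(ii) for the limits of $\delta^2\mathcal I_\delta$ and $\delta^2\mathcal J_\delta$, applies Proposition~\ref{prop:StableCLT} with the same $Y_\delta(t)=\delta X_{\delta,x_0}^\Delta(t)\sigma(X(t))K_{\delta,x_0}$, and concludes via Slutsky. The only minor refinement is in the localisation: the paper phrases it via the events $\Omega_R=\{\max_{t,x}|X(t,x)|\le R\}$ and explicitly invokes pathwise uniqueness of the SPDE so that $X$ coincides on $\Omega_R$ with the solution $X_R$ driven by the bounded $\sigma_R=\sigma\wedge R$, which is what actually licenses applying Proposition~\ref{convergence of IJ_delta} (proved under \eqref{Eqbarsigma}) to the truncated problem---your stopping-time formulation is equivalent, but you should make this uniqueness step explicit.
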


\begin{proof}
The detailed proof is deferred to Section \ref{app:stableconv}.
\end{proof}

This result establishes a very desirable robustness property of the ANE $\hat{\vartheta}_{\delta}$: Even though it was designed for estimation in the stochastic heat equation with additive noise, the ANE still converges with the same rate $\delta$ to the true pa\-ra\-me\-ter under multiplicative noise. This is analogous to the ordinary least squares estimator in linear regression with heteroskedastic noise, which still attains optimal rates, yet loses in the variance due to the variability in noise levels. An efficient regression estimator is  obtained by a noise-level weighted least squares method, which provides an analogy for our next estimators.

\subsection{The multiplicative noise estimator}

We aim at improving the ANE by adjusting the estimator  in such a way that the denominator contains already the quadratic variation of the martingale part in the numerator. To that end, we need to incorporate the term $\| \sigma (X(t)) K_{\delta, x_0} \|^2$ that is not observed directly, but  is still attainable from the data.
The quadratic variation of the observed semi-martingale $(X_{\delta, x_0}(t), 0 \leq t \leq T)$ equals
\begin{equation} \label{eq:QV}
\left\langle X_{\delta, x_0} \right\rangle_t = \int_0^t \| \sigma (X(s)) K_{\delta, x_0} \|^2 \, ds,\quad t\in[0,T].
\end{equation}
So we have access to $\| \sigma (X(t)) K_{\delta, x_0} \|^2$ by differentiation of the realized quadratic variation. For discrete time data, sampled at high-frequency, some spot volatility estimators from the field of mathematical finance can be used to access this term, see Section \ref{sec:simulation} below. This way we obtain a second estimator, taking into account the multiplicative noise in the stochastic heat equation.

\begin{definition} \label{def:tilde estimator}
The multiplicative noise estimator (MNE) $\tilde{\vartheta}_{\delta}$ of the parameter $\vartheta > 0$ is defined as
\begin{equation} \label{eq:tilde estimator}
\tilde{\vartheta}_{\delta} = \frac{\int_0^T \frac{X_{\delta, x_0}^{\Delta} (t)}{\| \sigma(X(t)) K_{\delta, x_0} \|^2} \, dX_{\delta, x_0} (t)}{\int_0^T \frac{( X_{\delta, x_0}^{\Delta} (t) )^2}{\| \sigma(X(t)) K_{\delta, x_0} \|^2} \, dt}.
\end{equation}
\end{definition}

Let us remark that the MNE $\tilde{\vartheta}_{\delta}$ can also be derived like the ANE $\hat{\vartheta}_{\delta}$ in \cite{AR}, maximising a corresponding pseudo-likelihood in the multiplicative noise case. An alternative interpretation is that we regress the increment $dX_{\delta, x_0} (t)$ on $X_{\delta, x_0}^{\Delta} (t)$ and weight it by the inverse squared noise level $\| \sigma(X(t)) K_{\delta, x_0} \|^{-2}$, exactly as in weighted least squares for regression.  Since this is done under the correct model specification, we expect better estimation properties.

Using the representation of $dX_{\delta, x_0} (t)$ from \eqref{dX-delta}, we obtain
\begin{equation} \label{eq:errorfortilde}
\tilde{\vartheta}_{\delta} - \vartheta = \frac{\int_0^T \frac{X_{\delta, x_0}^{\Delta} (t)}{\| \sigma(X(t)) K_{\delta, x_0} \|^2} \, \langle \sigma(X(t)) K_{\delta, x_0}, dW(t) \rangle}{\int_0^T \frac{( X_{\delta, x_0}^{\Delta} (t) )^2}{\| \sigma(X(t)) K_{\delta, x_0} \|^2} \, dt} =: \frac{\tilde{\mathcal M}_{\delta}}{\tilde{\mathcal I}_{\delta}}.
\end{equation}
Since $\| \sigma (X(t)) K_{\delta, x_0} \|^2$ appears in the denominators, we require a lower bound on $\sigma(\cdot)$ in the following theorem.

\begin{theorem} \label{thm:tildevartheta}
Grant Assumptions \ref{ass:sigma}, \ref{ass:X}  and assume $\underline{\sigma}= \inf_{x \in \mathbb R} \sigma(x) > 0$. Then as $\delta \rightarrow 0$
\begin{equation}
\delta^{-1} ( \tilde{\vartheta}_{\delta} - \vartheta ) \stackrel{d}{\rightarrow} N \Big( 0, \frac{2 \vartheta \| K \|_{L^2(\mathbb R)}^2}{T \| K' \|_{L^2(\mathbb R)}^2} \Big).
\end{equation}
\end{theorem}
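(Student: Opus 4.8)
The plan is to start from the error decomposition \eqref{eq:errorfortilde}, namely $\delta^{-1}(\tilde{\vartheta}_\delta-\vartheta) = \delta^{-1}\tilde{\mathcal M}_\delta/\tilde{\mathcal I}_\delta = (\delta^{-1}\tilde{\mathcal M}_\delta/\tilde{\mathcal I}_\delta^{1/2})\cdot\tilde{\mathcal I}_\delta^{-1/2}$, and to analyse numerator and denominator separately. The key observation is that the weighting by $\|\sigma(X(t))K_{\delta,x_0}\|^{-2}$ makes the quadratic variation of the numerator martingale equal to exactly $\tilde{\mathcal I}_\delta$ (up to the $\delta$ scaling): the predictable quadratic variation of $\tilde{\mathcal M}_\delta$ is $\int_0^T \frac{(X_{\delta,x_0}^\Delta(t))^2}{\|\sigma(X(t))K_{\delta,x_0}\|^2}\,dt = \tilde{\mathcal I}_\delta$, so that $\tilde{\mathcal M}_\delta/\tilde{\mathcal I}_\delta^{1/2}$ is a self-normalised martingale. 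This is the structural reason the limit is a plain (not mixed) normal.

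The first main step is to establish the deterministic limit $\delta^2\tilde{\mathcal I}_\delta \xrightarrow{\mathbb P} \frac{T\|K'\|_{L^2(\mathbb R)}^2}{2\vartheta\|K\|_{L^2(\mathbb R)}^2}$. This should follow from the same local approximation machinery as Proposition~\ref{convergence of IJ_delta}: since $\sigma$ is bounded below by $\underline\sigma>0$ and (being continuous, hence locally bounded) effectively bounded on the range of $X$, the integrand $(X_{\delta,x_0}^\Delta(t))^2/\|\sigma(X(t))K_{\delta,x_0}\|^2$ can be compared, via the localisation argument of Proposition~\ref{prop:K}, to $(X_{\delta,x_0}^\Delta(t))^2/(\sigma^2(X(t,x_0))\|K_{\delta,x_0}\|^2)$; one then uses that $\delta^2(X_{\delta,x_0}^\Delta(t))^2$ behaves locally like a stationary Ornstein–Uhlenbeck-type quantity whose time-integral, weighted by $\sigma^{-2}(X(t,x_0))$, has the $\sigma^4/\sigma^2 = \sigma^2$-type cancellation built in — in fact the $\sigma^2(X(t,x_0))$ from the numerator's local variance cancels the $\sigma^2(X(t,x_0))$ in the denominator, leaving the clean deterministic constant $\frac{T\|K'\|^2_{L^2(\mathbb R)}}{2\vartheta\|K\|^2_{L^2(\mathbb R)}}$ with no dependence on $\sigma$ or on the random path. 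This cancellation is precisely what kills the conditional variance present in Theorem~\ref{thm:hatvartheta}.

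The second main step is the central limit theorem for $\delta^{-1}\tilde{\mathcal M}_\delta/\tilde{\mathcal I}_\delta^{1/2}$. Write the numerator as $\int_0^T\langle \tilde Y_\delta(t), dW(t)\rangle$ with $\tilde Y_\delta(t) = \delta X_{\delta,x_0}^\Delta(t)\sigma(X(t))K_{\delta,x_0}/\|\sigma(X(t))K_{\delta,x_0}\|^2$. Its quadratic variation is $\int_0^T\|\tilde Y_\delta(t)\|^2\,dt = \delta^2\tilde{\mathcal I}_\delta$, which by Step 1 converges in probability to the \emph{deterministic} constant $s^2 := \frac{T\|K'\|^2}{2\vartheta\|K\|^2}$. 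Hence condition (C1) of Proposition~\ref{prop:StableCLT} holds with constant $s(t)\equiv s/\sqrt{T}$, and the support condition (C2') holds because $\tilde Y_\delta(t)$ is supported in $\supp(K_{\delta,x_0})$. Applying Proposition~\ref{prop:StableCLT} gives $\delta^{-1}\tilde{\mathcal M}_\delta = \delta^{-1}\int_0^T\langle\delta^{-1}\tilde Y_\delta(t),dW(t)\rangle \cdot \delta$ — more carefully, $\int_0^T\langle\tilde Y_\delta(t),dW(t)\rangle = \delta\cdot(\text{something}) $; reorganising, $\delta^{-1}\tilde{\mathcal M}_\delta$ converges stably to $s\cdot Z$ with $Z\sim N(0,1)$ independent of $\mathcal F_T$. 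Since the limit is deterministic times $Z$, stable convergence collapses to ordinary convergence in distribution. Combining with Step 1 via Slutsky, $\delta^{-1}(\tilde{\vartheta}_\delta-\vartheta) = \delta^{-1}\tilde{\mathcal M}_\delta/(\delta^2\tilde{\mathcal I}_\delta)\cdot\delta \xrightarrow{d} sZ/s^2 = Z/s$, and $1/s^2 = 2\vartheta\|K\|^2/(T\|K'\|^2)$ gives the claimed variance $N(0, 2\vartheta\|K\|_{L^2(\mathbb R)}^2/(T\|K'\|_{L^2(\mathbb R)}^2))$.

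I expect the main obstacle to be Step 1, specifically justifying the replacement of the path-dependent integrand by its value frozen at $x_0$ uniformly enough to get convergence in probability of $\delta^2\tilde{\mathcal I}_\delta$, together with controlling the denominators $\|\sigma(X(t))K_{\delta,x_0}\|^2$ from below and above — the lower bound $\underline\sigma>0$ is used exactly here, and without it the integrand could blow up. The disentanglement-and-localisation argument of Proposition~\ref{prop:K} is designed for this, but one must check that dividing by $\|\sigma(X(t))K_{\delta,x_0}\|^2$ (rather than multiplying by $\|\sigma(X(t))K_{\delta,x_0}\|^2$, as in $\mathcal I_\delta$) does not spoil the uniform estimates; the boundedness of $\sigma$ on the (random but a.s.\ bounded) range of the continuous process $X$ and the uniform lower bound $\underline\sigma$ together make the weight $\|\sigma(X(t))K_{\delta,x_0}\|^{-2}$ uniformly bounded above and below by constants times $\delta$, which should suffice. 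The CLT in Step 2 is then essentially a direct application of the already-proved Proposition~\ref{prop:StableCLT}, so no genuinely new difficulty arises there.
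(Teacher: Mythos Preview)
Your approach is essentially correct and follows the same overall structure as the paper: the error decomposition $\delta^{-1}(\tilde\vartheta_\delta-\vartheta)=\tfrac{\tilde{\mathcal M}_\delta}{\tilde{\mathcal I}_\delta^{1/2}}\cdot(\delta^2\tilde{\mathcal I}_\delta)^{-1/2}$, convergence of $\delta^2\tilde{\mathcal I}_\delta$ via Proposition~\ref{convergence of IJ_delta}(iii), and Slutsky's lemma. Two points of divergence are worth noting.

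First, for the martingale term you invoke the stable limit theorem of Proposition~\ref{prop:StableCLT}. This works, but is heavier than needed: since $\delta^2\tilde{\mathcal I}_\delta$ converges to a \emph{deterministic} constant, a standard continuous-martingale central limit theorem (e.g.\ Theorem~1.19 in \cite{Ku}) already gives $\tilde{\mathcal M}_\delta/\tilde{\mathcal I}_\delta^{1/2}\stackrel{d}{\to}N(0,1)$ directly. The paper exploits this simplification; your route recovers the same conclusion after noting that stable convergence to a deterministic multiple of an independent Gaussian collapses to ordinary convergence in distribution, but the detour is unnecessary.

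Second, your handling of unbounded $\sigma$ is informal (``effectively bounded on the range of $X$''). The paper makes this rigorous: Proposition~\ref{prop:K} and Proposition~\ref{convergence of IJ_delta}(iii) are proved under the additional hypothesis $\overline\sigma<\infty$, and the extension to unbounded continuous $\sigma$ is done afterwards by a localisation argument on the events $\Omega_R=\{\max_{t,x}|X(t,x)|\le R\}$, replacing $\sigma$ by $\sigma_R=\sigma\wedge R$ and letting $R\to\infty$. Your sketch implicitly relies on this, but the order of the argument (first bounded $\sigma$, then localise) should be spelled out. A small slip: the weight $\|\sigma(X(t))K_{\delta,x_0}\|^{-2}$ is bounded above and below by constants independent of $\delta$ (since $\|K_{\delta,x_0}\|=\|K\|_{L^2(\mathbb R)}$), not ``constants times $\delta$''.
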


\begin{proof} The proof is deferred to Section \ref{app:stableconv}.
\end{proof}

\subsection{The stabilised multiplicative noise estimator}

The lower bound $\underline\sigma>0$ on $\sigma(\cdot)$ required for the MNE $\tilde\theta_\delta$ can be restrictive. For instance, when the random field $X(t,x)$ shall not take negative values, models usually require that $\lim_{x\downarrow 0}\sigma(x)=0$. To cover this case as well, we stabilise the denominators in the integrands of equation \eqref{eq:tilde estimator} by  adding a number $\varepsilon_{\delta}^2$ which tends to zero slowly as $\delta \rightarrow 0$.

\begin{definition} \label{def:star estimator}
Let $\varepsilon_{\delta} = \varepsilon(\delta)$ be a real function satisfying for any $\eta>0$
\begin{equation}\label{eq:epsilon}
\varepsilon_{\delta} \rightarrow 0, \quad
\varepsilon_{\delta}^{-1} \delta^{\eta} \rightarrow 0
\end{equation}
as $\delta\to 0$. Then the stabilised multiplicative noise estimator (SMNE) $\vartheta_{\delta}^{\star}$ of the parameter $\vartheta > 0$ is defined as
\begin{equation} \label{eq:star estimator}
\vartheta_{\delta}^{\star} = \frac{\int_0^T \frac{X_{\delta, x_0}^{\Delta} (t)}{\| \sigma(X(t)) K_{\delta, x_0} \|^2 + \varepsilon_{\delta}^2} \, dX_{\delta, x_0} (t)}{\int_0^T \frac{( X_{\delta, x_0}^{\Delta} (t) )^2}{\| \sigma(X(t)) K_{\delta, x_0} \|^2 + \varepsilon_{\delta}^2} \, dt}.
\end{equation}
\end{definition}

Condition \eqref{eq:epsilon}  says that $\eps_\delta$ tends to zero more slowly than any polynomial. It is satisfied for $\varepsilon_{\delta} = \frac{1}{\log(\delta^{-1})}$.
To analyse the asymptotic properties of the SMNE $\vartheta_{\delta}^{\star}$, we need to strengthen Assumptions \ref{ass:sigma} and \ref{ass:X}  slightly.

\begin{assumption}{(S')}\label{ass:sigma'}
The function $\sigma(\cdot)$ is $\beta_{\sigma}$--H\"older continuous, i.e., for some $\beta_{\sigma} \in (0,1]$ there exists a constant $C>0$ such that
$$
\forall x,y\in\Lambda:\;|\sigma(x) - \sigma(y)| \leq C |x-y|^{\beta_{\sigma}}.
$$
\end{assumption}

\begin{assumption}{(X')}\label{ass:X'}
Assumption \ref{ass:X} is satisfied and moreover the solution $X$ is in quadratic mean $\beta_x$--H\"older continuous in the space variable and $\beta_t$--H\"older continuous in the time variable, i.e., for some $\beta_x, \beta_t \in (0,1]$ there is a constant $C>0$ with
\begin{equation}\label{Eq:XHoelder}
\forall t,s\in[0,T],\,x,y\in\Lambda:\;\mathbb E (X(t,x) - X(s,y) )^2 \leq C \left( |t-s|^{2 \beta_t} + |x-y|^{2 \beta_x} \right).
\end{equation}
\end{assumption}

\begin{example} \label{ex:XSprime}
If $\sigma(\cdot)$ is Lipschitz continuous and the initial condition $X_0$ is continuous, then standard contraction arguments for the stochastic convolution and the regularity of the Green function for the heat equation yield Assumption \ref{ass:X}, see e.g. \cite{Ce}. Even Assumption \ref{ass:X'} holds with $\beta_x=1/2$ and $\beta_t=1/4$, provided $\sigma(\cdot)$ is Lipschitz continuous and the initial condition $X_0$ is $1/2$--H\"older continuous. In fact, standard proofs for pathwise H\"older regularity go via the Kolmogorov-Chentsov theorem and thus establish \eqref{Eq:XHoelder}, compare Theorem 2.1 in \cite{PT} or Corollary 3.4 in \cite{W} for a slightly more involved case on an unbounded domain.

The intriguing questions of weak existence, regularity and pathwise uniqueness for the stochastic heat equation with $\beta_\sigma$-H\"older continuous multiplicative noise $\sigma(\cdot)$ have so far only found partial answers. We refer to Theorem 1.3 in \cite{MP}, which yields our Assumption \ref{ass:X} in case $\beta_\sigma>3/4$ in case of an unbounded domain. For their continuity result the authors assert that the results in \cite{SS}, formulated for coloured noise in space, work analogously for the space-time white noise case. Equations (10) and (19) in \cite{SS} then establish H\"older regularity of $X$ in the sense of Assumption \ref{ass:X'}.
\end{example}

We turn to the analysis of the stabilised multiplicative noise estimator.
The error decomposition for the SMNE $\vartheta_{\delta}^{\star}$ follows from \eqref{eq:star estimator} and \eqref{dX-delta}:
\begin{equation} \label{eq:errorforstar}
\delta^{-1} \left(\vartheta_{\delta}^{\star} - \vartheta \right) = \frac{\mathcal M_{\delta}^{\star}}{(\mathcal I_{\delta}^{\star})^{1/2}} \cdot \frac{(\delta^2 \mathcal I_{\delta}^{\star})^{1/2}}{\delta^2 \mathcal J_{\delta}^{\star}}
\end{equation}
with
\begin{align*}
\mathcal M_{\delta}^{\star} &= \int_0^T \frac{X_{\delta, x_0}^{\Delta} (t)}{\| \sigma(X(t)) K_{\delta, x_0} \|^2 + \varepsilon_{\delta}^2} \, \left\langle \sigma(X(t)) K_{\delta, x_0}, dW(t) \right\rangle, \\
\mathcal I_{\delta}^{\star} &= \int_0^T \frac{\| \sigma (X(t)) K_{\delta, x_0} \|^2}{(\| \sigma (X(t)) K_{\delta, x_0} \|^2 + \varepsilon_{\delta}^2)^2} \left( X_{\delta, x_0}^{\Delta} (t) \right)^2 \, dt, \\
\mathcal J_{\delta}^{\star} &= \int_0^T \frac{( X_{\delta, x_0}^{\Delta} (t) )^2}{\| \sigma(X(t)) K_{\delta, x_0} \|^2 + \varepsilon_{\delta}^2} \, dt.
\end{align*}

The term $\mathcal I_{\delta}^{\star}$ is the quadratic variation of the martingale part $\mathcal M_{\delta}^{\star}$.
The limits of $\mathcal I_{\delta}^{\star}$ and $\mathcal J_{\delta}^{\star}$ for $\delta\to 0$ involve a (in general random) time length $T^\star$ during which $\sigma(X(t,x_0))$ does not vanish, compare Proposition \ref{convergence of IJ_delta} below. So, we use again the stable limit theorem of Proposition \ref{prop:StableCLT} and derive a central limit theorem for the SMNE $\vartheta_{\delta}^{\star}$ with rate $\delta$, without assuming a lower bound on $\sigma(\cdot)$.

\begin{theorem} \label{thm:starvartheta}
Grant Assumptions \ref{ass:sigma'} and \ref{ass:X'} with \eqref{eq:epsilon}. Introduce
$$
T^{\star} = \int_0^T \mathbf{1}\left(\sigma(X(t, x_0)) \neq 0 \right) \, dt.
$$
Then as $\delta \rightarrow 0$  on  the event $\{T^\star>0\}$
\begin{equation}
\delta^{-1} \left( \vartheta_{\delta}^{\star} - \vartheta \right) \xrightarrow{stably} \frac{(2 \vartheta)^{1/2} \| K \|_{L^2(\mathbb R)}}{(T^{\star})^{1/2} \| K' \|_{L^2(\mathbb R)}} \cdot Z,
\end{equation}
where $Z \sim N(0,1)$ is independent of $\scr F_T$.
\end{theorem}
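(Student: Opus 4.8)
The plan is to reduce the statement, via the error decomposition \eqref{eq:errorforstar}, to a stable central limit theorem for the martingale numerator and a law-of-large-numbers statement for the denominator. Since $\mathcal M_\delta^\star(\mathcal I_\delta^\star)^{-1/2}(\delta^2\mathcal I_\delta^\star)^{1/2}=\delta\mathcal M_\delta^\star$, the decomposition \eqref{eq:errorforstar} collapses to
\[
\delta^{-1}\bigl(\vartheta_\delta^\star-\vartheta\bigr)=\frac{\delta\mathcal M_\delta^\star}{\delta^2\mathcal J_\delta^\star}.
\]
Write $c:=\|K'\|_{L^2(\mathbb R)}^2\big/\bigl(2\vartheta\|K\|_{L^2(\mathbb R)}^2\bigr)$. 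Proposition~\ref{convergence of IJ_delta} supplies $\delta^2\mathcal I_\delta^\star\stackrel{\mathbb P}{\rightarrow}c\,T^\star$ and $\delta^2\mathcal J_\delta^\star\stackrel{\mathbb P}{\rightarrow}c\,T^\star$; should that proposition be phrased only for bounded $\sigma(\cdot)$, a preliminary localisation over the events $\{\sup_{t\in[0,T],x\in\bar\Lambda}|X(t,x)|\le R\}$, $R\to\infty$, reduces to that case, because $\sigma(\cdot)$, being continuous, is bounded on every compact interval.

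For the numerator I would apply Proposition~\ref{prop:StableCLT} with the integrand
\[
Y_\delta(t)=\delta\,\frac{X_{\delta,x_0}^\Delta(t)}{\|\sigma(X(t))K_{\delta,x_0}\|^2+\varepsilon_\delta^2}\,\sigma(X(t))K_{\delta,x_0},
\]
which is progressively measurable (since $X$ is), obeys $\int_0^T\langle Y_\delta(t),dW(t)\rangle=\delta\mathcal M_\delta^\star$ and $\int_0^T\|Y_\delta(t)\|^2\,dt=\delta^2\mathcal I_\delta^\star$, and is a pointwise multiple of $K_{\delta,x_0}$, hence supported in $\supp(K_{\delta,x_0})$ --- this is condition (C2'). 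Condition (C1) is precisely $\delta^2\mathcal I_\delta^\star\stackrel{\mathbb P}{\rightarrow}\int_0^T s^2(t)\,dt$ with $s(t)^2=c\,\mathbf 1(\sigma(X(t,x_0))\neq0)$, which is progressively measurable because $t\mapsto X(t,x_0)$ and $\sigma$ are continuous, and $\int_0^T s^2(t)\,dt=c\,T^\star$. Proposition~\ref{prop:StableCLT} then gives $\delta\mathcal M_\delta^\star\xrightarrow{stably}\int_0^T s(t)\,dB(t)$ with $B$ a scalar Brownian motion independent of $\scr F_T$. As $s$ is $\scr F_T$-measurable and $B$ is independent of $\scr F_T$, this integral is, conditionally on $\scr F_T$, centred Gaussian with variance $c\,T^\star$, so $\int_0^T s(t)\,dB(t)=(c\,T^\star)^{1/2}Z$ with $Z\sim N(0,1)$ independent of $\scr F_T$.

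It remains to combine the two. Since $\delta^2\mathcal J_\delta^\star$ converges in probability to the $\scr F_T$-measurable limit $c\,T^\star$, the pair $(\delta\mathcal M_\delta^\star,\delta^2\mathcal J_\delta^\star)$ converges jointly in the stable sense, multiplication by the $\scr F_T$-measurable indicator $\mathbf 1_{\{T^\star>0\}}$ preserves stable convergence, and on $\{T^\star>0\}$ the denominator limit $c\,T^\star$ is strictly positive, so the standard continuity properties of stable convergence (see \cite{JS}) apply to the quotient and yield, on $\{T^\star>0\}$,
\[
\delta^{-1}\bigl(\vartheta_\delta^\star-\vartheta\bigr)=\frac{\delta\mathcal M_\delta^\star}{\delta^2\mathcal J_\delta^\star}\xrightarrow{stably}\frac{(c\,T^\star)^{1/2}}{c\,T^\star}\,Z=\frac{(2\vartheta)^{1/2}\,\|K\|_{L^2(\mathbb R)}}{(T^\star)^{1/2}\,\|K'\|_{L^2(\mathbb R)}}\,Z,
\]
which is the claimed limit.

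The step I expect to be the main obstacle is the convergence $\delta^2\mathcal I_\delta^\star,\delta^2\mathcal J_\delta^\star\stackrel{\mathbb P}{\rightarrow}c\,T^\star$ packaged in Proposition~\ref{convergence of IJ_delta}. One has to show that the stabiliser $\varepsilon_\delta^2$ --- which by \eqref{eq:epsilon} decays more slowly than any power of $\delta$ --- dominates $\|\sigma(X(t))K_{\delta,x_0}\|^2$ exactly at the (random) times with $\sigma(X(t,x_0))=0$, where Assumptions~\ref{ass:sigma'} and \ref{ass:X'} force $\|\sigma(X(t))K_{\delta,x_0}\|^2$ to be of polynomial order in $\delta$, so that these times contribute $o_{\mathbb P}(1)$; at the remaining times $\|\sigma(X(t))K_{\delta,x_0}\|^2\to\sigma(X(t,x_0))^2\|K\|_{L^2(\mathbb R)}^2$ and $\varepsilon_\delta^2$ becomes asymptotically negligible. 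This needs to be fused with the time-averaging behaviour of $\delta^2(X_{\delta,x_0}^\Delta(t))^2$, whose integrals against bounded weights converge to $(2\vartheta)^{-1}\|K'\|_{L^2(\mathbb R)}^2$ times the corresponding weighted integral of $\sigma(X(t,x_0))^2$, and it is this combination that produces the factor $T^\star$. Once Proposition~\ref{convergence of IJ_delta} is available, the assembly above is routine.
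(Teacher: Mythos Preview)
Your proposal is correct and follows essentially the same route as the paper: apply Proposition~\ref{prop:StableCLT} to $Y_\delta(t)=\delta X_{\delta,x_0}^\Delta(t)\,\sigma(X(t))K_{\delta,x_0}\big/(\|\sigma(X(t))K_{\delta,x_0}\|^2+\varepsilon_\delta^2)$ for the martingale part, invoke Proposition~\ref{convergence of IJ_delta}(iv),(v) for $\delta^2\mathcal I_\delta^\star$ and $\delta^2\mathcal J_\delta^\star$, and remove the boundedness assumption on $\sigma(\cdot)$ by localising on $\{\sup_{t,x}|X(t,x)|\le R\}$. The only cosmetic difference is that the paper keeps the two-factor form $\frac{\mathcal M_\delta^\star}{(\mathcal I_\delta^\star)^{1/2}}\cdot\frac{(\delta^2\mathcal I_\delta^\star)^{1/2}}{\delta^2\mathcal J_\delta^\star}$ rather than collapsing to $\delta\mathcal M_\delta^\star/(\delta^2\mathcal J_\delta^\star)$, but this is immaterial.
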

\begin{proof}
The proof is deferred to Section \ref{app:stableconv}.
\end{proof}

\begin{remark}
From the series of inequalities
\begin{equation} \label{eq:asympvariances}
\frac{(\int_0^T \sigma^4 (X(t, x_0)) \, dt )^{1/2}}{\int_0^T \sigma^2 (X(t, x_0)) \, dt} \geq \frac{1}{\sqrt{T^\star}} \geq \frac{1}{\sqrt{T}}
\end{equation}
we infer that the (conditional) asymptotic variance of the SMNE lies between those of the ANE and the MNE. Remember, however, that the  asymptotics for the MNE were derived under the condition $\underline\sigma>0$, implying $T^\star=T$. The extreme case $\sigma(\cdot) \equiv 0$ leads to the deterministic heat equation, which for the initial condition $X_0=0$ remains zero all the time and does not allow for inference on $\theta$. This type of degeneracy is excluded for the SMNE by the condition $T^\star>0$.
\end{remark}

\subsection{Confidence intervals}

The asymptotic (mixed) normality of the three estimators allows us to prescribe asymptotic confidence intervals for the parameter $\theta$. The asymptotic conditional variances depend on quantities unknown to the statistician. Yet, in all three error decompositions \eqref{eq:errorforhat}, \eqref{eq:errorfortilde} and \eqref{eq:errorforstar} it is shown in the proofs (see Section \ref{app:stableconv} for the details) that the martingale term divided by the square root of its quadratic variation is asymptotically standard Gaussian. Dividing each error decomposition by the respective second factor on the right-hand side directly gives an asymptotic confidence statement.

\begin{corollary}\label{CorConfI}
Let $\alpha\in(0,1)$. Based on the three estimators $\hat{\vartheta}_{\delta}$, $\tilde{\vartheta}_{\delta}$ and $\vartheta_{\delta}^{\star}$ the  confidence intervals for $\theta$
\begin{align*}
\hat I_{1 - \alpha} &= \Big[ \hat{\vartheta}_{\delta} - \frac{\mathcal I_{\delta}^{1/2}}{\mathcal J_{\delta}} \cdot q_{1 - {\alpha}/2}, \hat{\vartheta}_{\delta} + \frac{\mathcal I_{\delta}^{1/2}}{\mathcal J_{\delta}} \cdot q_{1 - {\alpha}/2} \Big], \\
\tilde{I}_{1 - {\alpha}} &= \Big[ \tilde{\vartheta}_{\delta} - \frac{1}{\tilde{\mathcal I}_{\delta}^{1/2}} \cdot q_{1 - {\alpha}/2}, \tilde{\vartheta}_{\delta} + \frac{1}{\tilde{\mathcal I}_{\delta}^{1/2}} \cdot q_{1 - {\alpha}/2} \Big], \\
I^{\star}_{1 - {\alpha}} &= \Big[ {\vartheta}_{\delta}^\star - \frac{(\mathcal I_{\delta}^\star)^{1/2}}{\mathcal J_{\delta}^\star} \cdot q_{1 - {\alpha}/2}, {\vartheta}_{\delta}^\star + \frac{\mathcal (\mathcal I_{\delta}^\star)^{1/2}}{\mathcal J_{\delta}^\star} \cdot q_{1 - {\alpha}/2} \Big]
\end{align*}
with the standard Gaussian $(1-\alpha/2)$-quantile $q_{1-\alpha/2}$ have each asymptotic coverage $1-\alpha$ as $\delta\to 0$ under the assumptions of Theorems \ref{thm:hatvartheta}, \ref{thm:tildevartheta} and \ref{thm:starvartheta}, respectively.
\end{corollary}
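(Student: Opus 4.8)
The plan is to read off the confidence statement directly from the three error decompositions \eqref{eq:errorforhat}, \eqref{eq:errorfortilde} and \eqref{eq:errorforstar}, using that in each case the \emph{first} factor — the martingale divided by the square root of its quadratic variation — converges stably to a standard Gaussian $Z$ independent of $\scr F_T$, which is exactly what is established in the proofs of Theorems \ref{thm:hatvartheta}, \ref{thm:tildevartheta} and \ref{thm:starvartheta} via Proposition \ref{prop:StableCLT}. The point is that the \emph{observable} quantities $\mathcal I_\delta, \mathcal J_\delta$ (resp. $\tilde{\mathcal I}_\delta$, resp. $\mathcal I_\delta^\star,\mathcal J_\delta^\star$) are precisely the pieces appearing in the second factor of the decomposition, so no further unknown normalisation is needed.

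Concretely, for the ANE, \eqref{eq:errorforhat} gives $\frac{\mathcal J_\delta}{\mathcal I_\delta^{1/2}}(\hat\vartheta_\delta-\vartheta)=\frac{\mathcal M_\delta}{\mathcal I_\delta^{1/2}}\xrightarrow{stably}Z$ on the event $\{\int_0^T\sigma^2(X(t,x_0))\,dt>0\}$. Since $\mathcal I_\delta,\mathcal J_\delta$ are $\scr F_T$-measurable and the limit $Z$ is independent of $\scr F_T$, $\PP$-a.s. positivity of $\delta^2\mathcal I_\delta$ and $\delta^2\mathcal J_\delta$ in the limit (from Proposition \ref{convergence of IJ_delta}) means the event is, up to null sets, the one on which the rescaled statistics do not degenerate; hence
\[
\PP\Big(\theta\in\hat I_{1-\alpha}\Big)=\PP\Big(\big|\tfrac{\mathcal J_\delta}{\mathcal I_\delta^{1/2}}(\hat\vartheta_\delta-\vartheta)\big|\le q_{1-\alpha/2}\Big)\longrightarrow \PP(|Z|\le q_{1-\alpha/2})=1-\alpha .
\]
The identical argument applies to the MNE, where \eqref{eq:errorfortilde} gives $\tilde{\mathcal I}_\delta^{1/2}(\tilde\vartheta_\delta-\vartheta)=\tilde{\mathcal M}_\delta/\tilde{\mathcal I}_\delta^{1/2}\xrightarrow{d}Z$ under $\underline\sigma>0$, and to the SMNE, where \eqref{eq:errorforstar} gives $\frac{\mathcal J_\delta^\star}{(\mathcal I_\delta^\star)^{1/2}}(\vartheta_\delta^\star-\vartheta)=\mathcal M_\delta^\star/(\mathcal I_\delta^\star)^{1/2}\xrightarrow{stably}Z$ on $\{T^\star>0\}$.

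The main technical point — and the only step that is not completely routine — is the passage from \emph{stable} convergence of the first factor to convergence of the coverage probability, because the normaliser multiplying $(\hat\vartheta_\delta-\vartheta)$ is itself random and $\scr F_T$-measurable. This is handled by the standard fact that if $U_\delta\xrightarrow{stably}Z$ with $Z$ independent of $\scr F_T\supset\sigma(\text{normalisers})$, then $\PP(|U_\delta|\le q)\to\PP(|Z|\le q)$ for every continuity point $q$ of the law of $|Z|$; stable convergence is exactly designed to make this work even though we are not simply composing with a fixed continuous function. One should also note that on the relevant events $\{\int_0^T\sigma^2(X(t,x_0))\,dt>0\}$, $\{\underline\sigma>0\}$, $\{T^\star>0\}$ the denominators $\mathcal J_\delta$, $\tilde{\mathcal I}_\delta$, $\mathcal J_\delta^\star$ are $\PP$-a.s. strictly positive for each fixed $\delta$ (so the intervals are well defined) and that $q_{1-\alpha/2}$ is a continuity point of the $N(0,1)$-modulus law. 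Assembling these observations for each of the three estimators yields the claimed asymptotic coverage $1-\alpha$.
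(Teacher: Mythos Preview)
Your argument is correct and follows the same route the paper indicates in the paragraph preceding the corollary: rewrite each error decomposition so that the observable normaliser cancels the second factor, leaving $\mathcal M_\delta/\mathcal I_\delta^{1/2}$ (resp.\ $\tilde{\mathcal M}_\delta/\tilde{\mathcal I}_\delta^{1/2}$, $\mathcal M_\delta^\star/(\mathcal I_\delta^\star)^{1/2}$), which by the proofs of Theorems \ref{thm:hatvartheta}--\ref{thm:starvartheta} converges (stably) to a standard normal. Two minor remarks: once you have reduced to $U_\delta:=\mathcal M_\delta/\mathcal I_\delta^{1/2}\to Z$, ordinary weak convergence already gives $\PP(|U_\delta|\le q_{1-\alpha/2})\to 1-\alpha$, so the emphasis on stable convergence is unnecessary at this final step; and $\underline\sigma>0$ is a deterministic assumption rather than an event.
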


Note that the confidence intervals only rely on the observation processes $(X_{\delta, x_0}^{\Delta}(t), 0 \leq t \leq T)$, $(X_{\delta, x_0}(t), 0 \leq t \leq T)$ and the quadratic variation of the latter. Even the kernel $K$ and the resolution level $\delta$ need not be known. In the next section we shall see how the estimation methods can be implemented when only data is available that is discretely sampled in time.

\section{Implementation and simulation results} \label{sec:simulation}

We illustrate the main results in a  setting similar to the experimental setup in \cite{ABJR}, where  the diffusivity parameter $\vartheta$ was estimated in a concrete stochastic model for cell repolarisation.

Consider the stochastic heat equation \eqref{equation SPDE} with $\Lambda = (0, L)$ for $L=20$, $T=30$, $\vartheta = 0.05$. The initial condition $X_0$ is a smooth approximation of the function $f(x) = 4 \times \mathbf{1}_{[L/4,3L/4]}(x) + 2 \times \mathbf{1}_{(0,L/4) \cup (3L/4,L)}(x)$. We present the results for three different functions $\sigma$:
\begin{align*}
\sigma_1(x) &= 0.20, \\
\sigma_2 (x) &= (0.20 \times |x|^{0.80} + 0.01), \\
\sigma_3 (x) &= 10 \times \exp(-10 \times |x-2|) + 10 \times \exp(-10 \times |x-4|).
\end{align*}
We have chosen $\sigma_2(\cdot)$ to have Hölder regularity 0.8 in line with Example \ref{ex:XSprime} and not to vanish completely at zero so that all three estimators are applicable. $\sigma_3(\cdot)$ generates strong noise level fluctuations so that the quality of the estimators should differ significantly.

An approximate solution is computed on a regular time-space grid $\{ (t_j, y_k): t_j = Tj/N, y_k = Lk/M, j = 0, \ldots, N, k = 0, \ldots, M \}$ with $N = 48 \, 000$ and $M = 800$ by the Euler-Maruyama scheme. For the drift part, we use the finite difference approximation of $\Delta$ that is applied implicitly, while  $\sigma(\cdot)$ in the stochastic term is applied to the current state of the solution explicitly, compare Algorithm 10.8 in \cite{LPS}. The mesh sizes fulfill $T/N \asymp (L/M)^2$, ensuring the Courant-Friedrichs-Lewy (CFL) condition for stable simulations \cite{LPS}. Heat maps for typical realisations with multiplicative noise $\sigma_2(X(t))$ and $\sigma_3(X(t))$ are displayed in Figure \ref{fig:1+2}. Under $\sigma_2(\cdot)$ we see that fluctuations are larger for higher temperature levels, while at the boundary it cools down to zero almost deterministically. Under $\sigma_3(\cdot)$ excitations by strong noise at the interface values 2 and 4 are counteracted by the diffusion, which leads to almost noiseless inner regions with strong fluctuations of the interfaces in time. The spatial gradient at the interfaces is very large, which is no numerical artefact, but due to expulsion by noise.

\begin{figure}
\includegraphics[width=0.5\textwidth]{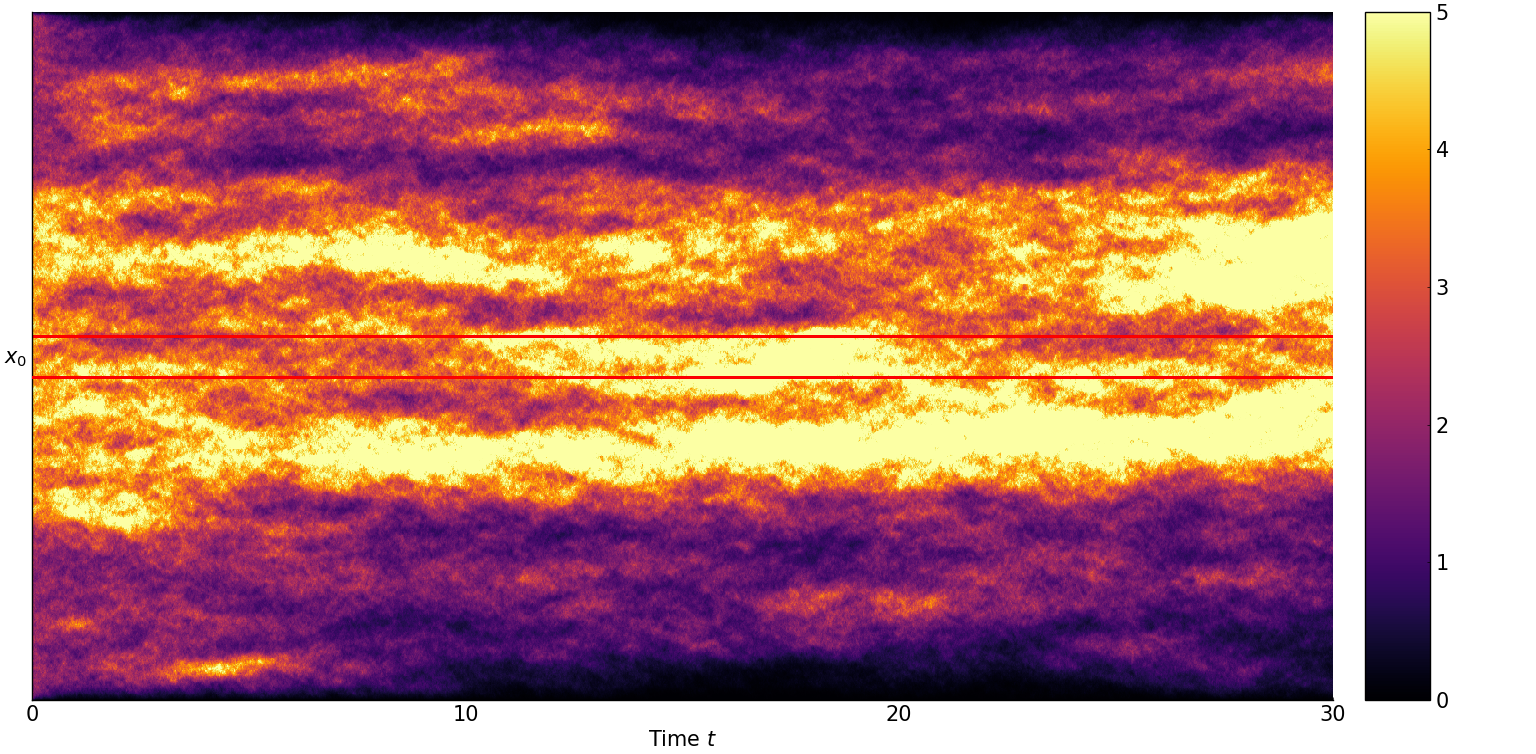}\includegraphics[width=0.5\textwidth]{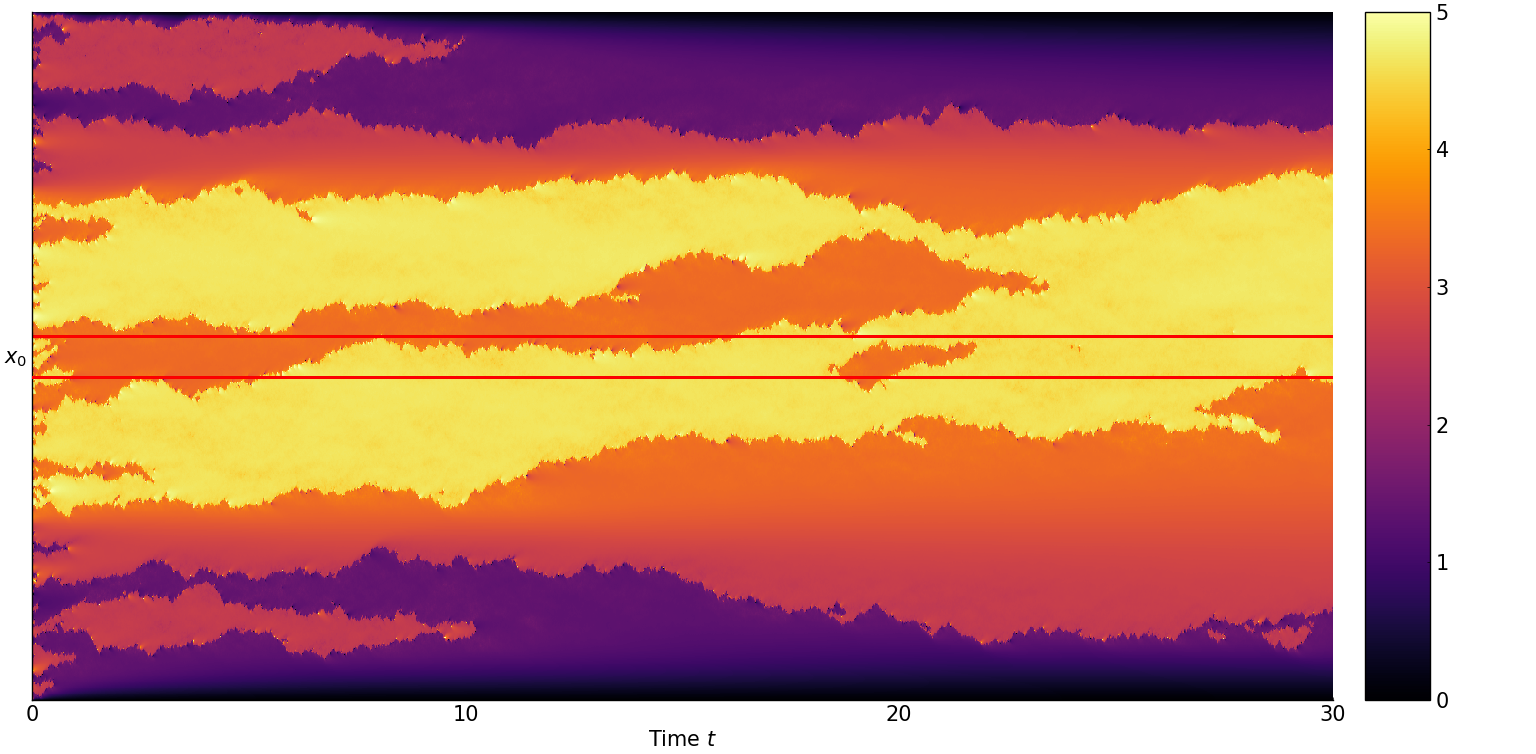}
\caption{Realisation of the stochastic heat equation with  multiplicative noise\\ $\sigma_2 (x) = 0.20 \times |x|^{0.8} + 0.01$ (left) and $\sigma_3 (x) = 10e^{-10 |x-2|} + 10 e^{-10 |x-4|}$ (right).\\ The horizontal lines indicate the support of the kernel $K_{\delta,x_0}$.}
\label{fig:1+2}
\end{figure}

As in \cite{ABJR} we employ the smooth compactly supported kernel
$$
K(x) = \frac{\tilde{K}(x) }{ \| \tilde{K} \|_{L^2(\mathbb R)}}\quad\text{ with }\quad\tilde{K}(x) = \exp \Big( - \frac{10}{1-x^2} \Big) \mathbf{1}_{[-1,1]}(x), \quad x \in \mathbb R,
$$
and we localise around the central point $x_0 = L/2$ with $\delta = 0.03 \times L$. Based on these local measurements, the estimators $\hat{\vartheta}_{\delta}$ (ANE), $\tilde{\vartheta}_{\delta}$ (MNE) and $\vartheta_{\delta}^{\star}$ (SMNE) are computed.

The term $Y(t):=\| \sigma (X(t)) K_{\delta, x_0} \|^2$ is accessed by the following procedure. In view of \eqref{eq:QV}, $Y(t)$ presents the spot squared volatility of $X_{\delta,x_0}$ at time $t$, which we estimate by
$$
\hat{Y}(t_n) = \frac{1}{n \wedge D} \sum_{j=(n-D+1) \vee 1}^{n} \frac{N}{T} (X_{\delta, x_0}(t_j) - X_{\delta, x_0}(t_{j-1}))^2, \quad n = 1, \ldots, N,
$$
i.e., by taking the average disintegrated realised quadratic variation over the past $D = 800$ values ($=0.5$ time units). It is a kernel type estimator of spot squared volatility that follows classical methods, see e.g.  \cite{FJZZ}, Section 2, for a description  and further references. The one-sided estimation kernel is employed so that only historical data are used in the construction and the averaging acts as a smoothing, putting the same weights on the past $D$ values.

Finally, we choose the stabilising value $\varepsilon_{\delta}^2 = \frac{0.001}{\log(10/\delta)}$ such  that it satisfies condition \eqref{eq:epsilon} and lies within the range of typical values of $\| \sigma (X(\cdot)) K_{\delta, x_0} \|^2$. The possible issue could be that if the term $\varepsilon_{\delta}^2$ is much smaller than $\| \sigma (X(\cdot)) K_{\delta, x_0} \|^2$, the SMNE would practically become the MNE. On the other hand, if the term $\varepsilon_{\delta}^2$  dominated $\| \sigma (X(\cdot)) K_{\delta, x_0} \|^2$, then the SMNE would practically coincide with the ANE. So, in practice we recommend to estimate the spot volatility first and then to adjust $\varepsilon_{\delta}^2$ accordingly.

\begin{figure}[t]
	\includegraphics[width=\textwidth]{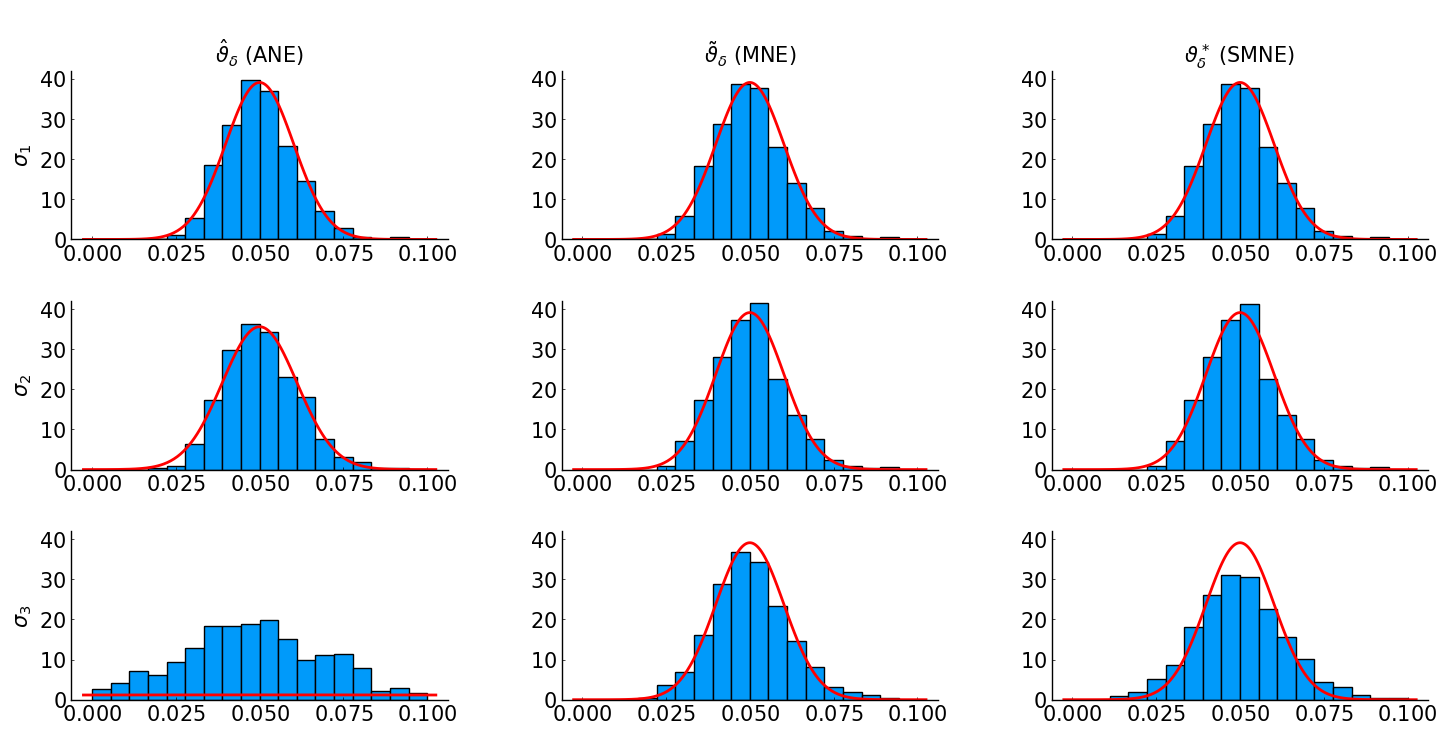}
	\caption{Histograms of the estimators with red lines depicting  the asymptotic densities. \\
From left to right: ANE $\hat{\vartheta}_{\delta}$; MNE $\tilde{\vartheta}_{\delta}$; SMNE $\vartheta_{\delta}^{\star}$. From top to bottom: $\sigma_1$; $\sigma_2$; $\sigma_3$.}
\label{fig:hist}
\end{figure}

\begin{table}[b]
\centering
\begin{tabular}{c|ccc}
\toprule
Mean (SD) & ANE $\hat{\vartheta}_{\delta}$ & MNE $\tilde{\vartheta}_{\delta}$  & SMNE $\vartheta_{\delta}^{\star}$  \\
\hline
$\sigma_1$ & 0.050183 (0.0104) & 0.050057 (0.0104) & 0.050059 (0.0104) \\
$\sigma_2$ & 0.050632 (0.0111) & 0.050163 (0.0104) & 0.050163 (0.0104) \\
$\sigma_3$ & 0.052085 (0.0522) & 0.050115 (0.0115) & 0.050232 (0.0135) \\
\bottomrule
\end{tabular}
\caption{Monte Carlo mean and standard deviation of estimators for different $\sigma(\cdot)$.}
\label{tbl:1}
\end{table}

Figure \ref{fig:hist} displays simulation results for the estimators of the parameter $\vartheta$ obtained after $1 \, 000$ Monte Carlo runs for each of the functions $\sigma_1$, $\sigma_2$ and $\sigma_3$. The red lines in the histograms indicate the  asymptotic distribution, obtained as a mixture of $1 \, 000$ Gaussian densities that (individually, for each run) follow the theoretical results established in Theorems \ref{thm:hatvartheta}, \ref{thm:tildevartheta} and \ref{thm:starvartheta}. Monte Carlo mean and standard deviation for every case are stated in Table \ref{tbl:1}.

In the additive noise case $\sigma_1$ all three estimators perform similarly well. In this case we have  equalities in \eqref{eq:asympvariances} and the resulting asymptotic distributions coincide.
In the ``H\"older'' multiplicative noise case $\sigma_2$, the estimator ANE performs slightly worse than the two alternatives. Since $\sigma_2(\cdot) \geq  0.01 > 0$, we have $T^{\star} = T$ and the estimators MNE and SMNE deliver similar results.

For $\sigma_3$ the histogram of the ANE in Figure \ref{fig:hist} (bottom, left) is much more spread out, but has not yet entered the asymptotic regime with a very flat asymptotic density. There are, however, quite a few outliers (12.6 \%) outside the interval $[0,0.1]$, which are not shown and which are caught pretty well by the tails of the asymptotic density. Note also that the corresponding empirical standard deviation in Table \ref{tbl:1} is very high with about  half the length of the interval $[0,0.1]$. The estimators MNE and SMNE give a significant improvement here with an error distribution that is almost unchanged with respect to the cases $\sigma_1$ and $\sigma_2$. It is worth noting that the assumption $\underline{\sigma} > 0$ from Theorem~\ref{thm:tildevartheta} for the MNE is violated by $\sigma_3$ and we also had $T^{\star} < T$, but with a minor difference only. In the discrete numerical setting we use the threshold $10^{-6}$  to determine whether $\sigma(X(t, x_0))$ is zero or not.

\begin{figure}
\centering
	\includegraphics[width=0.5\textwidth]{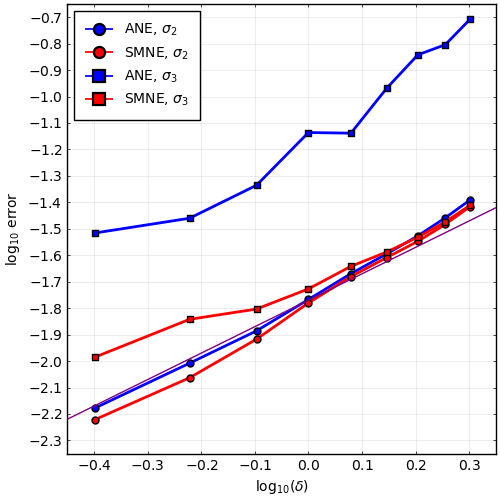}
	\caption{$\log_{10}$-$\log_{10}$ plot of RMSE for the estimators ANE $\hat{\vartheta}_{\delta}$ and SMNE $\vartheta_{\delta}^{\star}$ under multiplicative noise $\sigma_2(\cdot)$ and $\sigma_3(\cdot)$. The purple line with slope $1$ is added as reference.}
\label{fig:log}
\end{figure}

Simulation results for varying $\delta$ confirm the convergence rate $\delta$ as $\delta \rightarrow 0$. Figure \ref{fig:log} shows a $\log_{10}$-$\log_{10}$ plot of root mean squared estimation errors for the estimators ANE $\hat{\vartheta}_{\delta}$ and SMNE $\vartheta_{\delta}^{\star}$ obtained after $100$ Monte Carlo simulations for each $\delta$ based on the multiplicative noise $\sigma_2$ and $\sigma_3$. The estimator MNE $\tilde{\vartheta}_{\delta}$ is omitted here, because under the noise $\sigma_2$ the differences between MNE and SMNE are minimal and the assumption $\underline{\sigma} > 0$ for the MNE is again violated by $\sigma_3$. The estimation errors are significantly smaller in the ``H\"older'' multiplicative noise case $\sigma_2$ and the SMNE provides a very substantial improvement in the $\sigma_3$ case. The errors are very well aligned with the asymptotic standard error as predicted by Theorems \ref{thm:hatvartheta} and \ref{thm:starvartheta}. The  reference line with slope $1$ is added to compare with the theoretical convergence rate $\delta$. Note that the spatial discretisation for reliable simulations must always be much finer than $\delta$. In our setup with $\delta = 0.6$ and $L/M = 0.025$, the localised kernel $K_{\delta, x_0}$ was evaluated discretely on $48$ grid points.

Further unreported simulations show that the performance of the estimators is not influenced by the location of the central observation point $x_0$, unless $x_0$ is located very close to the boundary. In fact, if  several local measurements (localised around points $\{ x_0^j: j = 1, \ldots J \}$) are available, it is possible to combine local estimators, see \cite{ABJR}, where such an approach is explained and used.

In conclusion, the two newly proposed estimators MNE and SMNE performed as well as the ANE or even better than the ANE and their asymptotic distribution matches the results obtained in Section \ref{sec:mainresults}. The ANE provides good estimation accuracy also under multiplicative noise, but its accuracy suffers under strongly varying $\sigma(\cdot)$.

\section{Proofs}\label{sec:proofs}

First we shall establish all results under the additional condition
\begin{equation}\label{Eqbarsigma}
\overline{\sigma} := \sup_{x \in \mathbb R} \sigma (x) < \infty.
\end{equation}
Using the continuity of the solution $X(t,x)$, we shall then get rid of this assumption in the last step of the proofs of central limit theorems in Section \ref{app:stableconv}.

\subsection{Fundamental asymptotics}

We need some properties of the rescaled operators and semigroups. Let $(S_{\vartheta, \delta, x_0}(t), t \geq 0)$ be the strongly continuous semigroup generated by $\vartheta \Delta$ with Dirichlet boundary conditions on $L^2(\Lambda_{\delta, x_0})$ and note that both semigroups $(S_{\vartheta}(t), t \geq 0)$ and $(S_{\vartheta, \delta, x_0}(t), t \geq 0)$, are self-adjoint. We cite Lemma 3.1 from \cite{AR}:

\begin{lemma} \label{scaling properties}
For $\delta \in(0,1)$ the following holds:
\begin{itemize}
\item[(i)] If $z \in H_0^1(\Lambda_{\delta, x_0}) \cap H^2(\Lambda_{\delta, x_0})$, then $\Delta z_{\delta, x_0} = \delta^{-2} \left( \Delta z \right)_{\delta, x_0}$.
\item[(ii)] If $z \in L^2(\Lambda_{\delta, x_0})$, then $S_{\vartheta}(t) z_{\delta, x_0} = \left( S_{\vartheta, \delta, x_0}(\delta^{-2} t) z \right)_{\delta, x_0}$, $t \geq 0$.
\end{itemize}
\end{lemma}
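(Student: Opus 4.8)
The whole lemma rests on the observation that the rescaling $z \mapsto z_{\delta,x_0}$ is nothing but the pullback along the affine diffeomorphism $y \mapsto x = x_0 + \delta y$ from $\Lambda_{\delta,x_0}$ onto $\Lambda$, renormalised so as to become unitary. So the plan is: (a) prove (i) by the chain rule, taking care that the change of variables maps $H_0^1\cap H^2$ onto $H_0^1\cap H^2$ with the stated factor; (b) deduce (ii) from (i) by identifying generators and invoking uniqueness of $C_0$-semigroups.

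For part (i), fix $z\in H_0^1(\Lambda_{\delta,x_0})\cap H^2(\Lambda_{\delta,x_0})$ and write $y=\delta^{-1}(x-x_0)$, so that $z_{\delta,x_0}(x)=\delta^{-1/2}z(y)$. Since $x\mapsto y$ is a bi-Lipschitz affine bijection $\Lambda\to\Lambda_{\delta,x_0}$, composition with it preserves $L^2$-Sobolev regularity and maps the boundary $\partial\Lambda$ onto $\partial\Lambda_{\delta,x_0}$; hence $z_{\delta,x_0}\in H_0^1(\Lambda)\cap H^2(\Lambda)$, in particular $z_{\delta,x_0}\in H_0^1(\Lambda)\cap H^2(\Lambda)$ as claimed for $K_{\delta,x_0}$ elsewhere. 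Differentiating twice in $x$ gives $(z_{\delta,x_0})'(x)=\delta^{-3/2}z'(y)$ and $(z_{\delta,x_0})''(x)=\delta^{-5/2}z''(y)$, while by definition $(\Delta z)_{\delta,x_0}(x)=\delta^{-1/2}(z''(y))$. Comparing the two expressions yields $\Delta z_{\delta,x_0}=\delta^{-5/2}z''(\cdot)=\delta^{-2}(\Delta z)_{\delta,x_0}$, which is (i). (For $H^2$ functions these identities hold in the weak sense, or equivalently Lebesgue-a.e.)

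For part (ii), introduce the operator $U\colon L^2(\Lambda_{\delta,x_0})\to L^2(\Lambda)$, $Uz=z_{\delta,x_0}$. The change of variables $y=\delta^{-1}(x-x_0)$ shows $\|Uz\|_{L^2(\Lambda)}^2=\int_{\Lambda}\delta^{-1}|z(\delta^{-1}(x-x_0))|^2\,dx=\int_{\Lambda_{\delta,x_0}}|z(y)|^2\,dy=\|z\|_{L^2(\Lambda_{\delta,x_0})}^2$, so $U$ is unitary (its inverse is the analogous rescaling with $\delta$ replaced by $\delta^{-1}$). By part (i), $U$ maps $\dom(\vartheta\Delta)=H_0^1(\Lambda_{\delta,x_0})\cap H^2(\Lambda_{\delta,x_0})$ onto $H_0^1(\Lambda)\cap H^2(\Lambda)=\dom(\vartheta\Delta)$ and satisfies $U(\vartheta\Delta z)=(\vartheta\Delta z)_{\delta,x_0}=\delta^2\vartheta\Delta (Uz)$, i.e. $U(\vartheta\Delta)U^{-1}=\delta^2\,\vartheta\Delta$ as operators on $L^2(\Lambda)$. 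Consequently the family $T(t):=U\,S_{\vartheta,\delta,x_0}(\delta^{-2}t)\,U^{-1}$ is a $C_0$-semigroup on $L^2(\Lambda)$ (unitary conjugation and the time dilation $t\mapsto\delta^{-2}t$ preserve the semigroup and strong-continuity properties), whose generator is $\delta^{-2}\,U(\vartheta\Delta)U^{-1}=\vartheta\Delta$ on $H_0^1(\Lambda)\cap H^2(\Lambda)$. Since $(S_\vartheta(t))$ has the same generator, uniqueness of $C_0$-semigroups gives $S_\vartheta(t)=U\,S_{\vartheta,\delta,x_0}(\delta^{-2}t)\,U^{-1}$, and applying this to $z_{\delta,x_0}=Uz$ yields $S_\vartheta(t)z_{\delta,x_0}=\big(S_{\vartheta,\delta,x_0}(\delta^{-2}t)z\big)_{\delta,x_0}$. (Alternatively, one may expand $z$ in the Dirichlet eigenbasis $(e_k)$ of $\Lambda_{\delta,x_0}$; by (i) each $(e_k)_{\delta,x_0}$ is a Dirichlet eigenfunction on $\Lambda$ with eigenvalue scaled by $\delta^{-2}$, and the identity follows termwise.)

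The only genuinely delicate point is the bookkeeping in step (a): checking that the affine pullback really does preserve the domain $H_0^1\cap H^2$ together with the Dirichlet boundary condition, and tracking the exponents of $\delta$ through the two differentiations and the $\delta^{-1/2}$ normalisation; once this is in place, step (b) is a routine application of semigroup uniqueness. This reproduces Lemma 3.1 of \cite{AR}.
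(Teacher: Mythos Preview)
Your proof is correct. The paper itself does not prove this lemma at all but merely cites it as Lemma~3.1 of \cite{AR}, so there is no approach to compare against; your chain-rule computation for (i) and the unitary-conjugation plus semigroup-uniqueness argument for (ii) are exactly the standard way to establish such scaling identities and would serve as a self-contained replacement for the citation.
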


The deterministic flow of the initial condition will become negligible due to the next result.

\begin{lemma} \label{lemma:ini}
For an initial condition $X_0\in C(\bar\Lambda)$ we have
$$
\int_0^T \left\langle S_{\vartheta} (t) X_0, \Delta K_{\delta, x_0} \right\rangle^2 \, dt =O(\delta^{-11/6})=o(\delta^{-2})\text{  as $\delta \rightarrow 0$.}
$$
\end{lemma}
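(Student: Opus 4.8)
The plan is to reduce the bound to a kernel estimate via the scaling relations of Lemma~\ref{scaling properties}, and then to quantify the smoothing effect of the heat semigroup on the (merely continuous, hence merely bounded) initial condition. First I would write $\langle S_\vartheta(t)X_0,\Delta K_{\delta,x_0}\rangle = \langle X_0, S_\vartheta(t)\Delta K_{\delta,x_0}\rangle$ by self-adjointness, and then use part~(i) of Lemma~\ref{scaling properties}, $\Delta K_{\delta,x_0} = \delta^{-2}(\Delta K)_{\delta,x_0}$, together with part~(ii), to express $S_\vartheta(t)\Delta K_{\delta,x_0}$ in terms of the rescaled semigroup $S_{\vartheta,\delta,x_0}(\delta^{-2}t)$ acting on $\Delta K$. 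After the substitution $t = \delta^2 u$ the time integral becomes $\delta^2\int_0^{T\delta^{-2}}\langle X_0,(S_{\vartheta,\delta,x_0}(u)\Delta K)_{\delta,x_0}\rangle^2\,du$ with an extra $\delta^{-4}$ from the two Laplacians, so that the whole quantity equals $\delta^{-2}\int_0^{T\delta^{-2}} \langle X_0,(S_{\vartheta,\delta,x_0}(u)\Delta K)_{\delta,x_0}\rangle^2\,du$.

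Next I would estimate the inner product. Since $K$ has compact support in $\Lambda_{1,x_0}$ and $\Delta K\in L^2$ with mean zero (it is a second derivative of a compactly supported $H^2$ function), the function $S_{\vartheta,\delta,x_0}(u)\Delta K$ is well localised near $x_0$ on the scale $\delta$ in the original variable, and $X_0$ being bounded and continuous with $X_0(x_0)$ finite gives, after the $\delta^{-1/2}$ rescaling in $z_{\delta,x_0}$, a bound of the form $|\langle X_0,(S_{\vartheta,\delta,x_0}(u)\Delta K)_{\delta,x_0}\rangle| \lesssim \delta^{1/2}\|S_{\vartheta,\delta,x_0}(u)\Delta K\|_{L^1}$. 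One should then split the time integral: for small $u$ (say $u\le 1$) one uses $\|S_{\vartheta,\delta,x_0}(u)\Delta K\|_{L^1}\lesssim\|\Delta K\|_{L^1}$, which is harmless; for large $u$ one uses the heat kernel decay. On the whole line the Gaussian heat kernel gives $\|S(u)\Delta K\|_{L^1}\lesssim u^{-1/2}\|K''\|$-type bounds, but on the bounded interval $\Lambda_{\delta,x_0}$ the spectral gap produces exponential decay $e^{-c u}$ for $u$ large. The crossover of these two regimes, together with the factor $\delta$ coming from $(\delta^{1/2})^2$, is what produces a gain over the trivial $\delta^{-2}$; optimising the split (and using that $\Lambda_{\delta,x_0}$ has length of order $\delta^{-1}$, so the gap is of order $\delta^{2}$) is where the exponent $-11/6$ emerges.

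The main obstacle will be getting the decay rate in $u$ sharp enough and uniform in $\delta$, because the domain $\Lambda_{\delta,x_0}$ itself grows like $\delta^{-1}$: for $u$ up to order $\delta^{-2}$ the rescaled semigroup behaves like the full-space heat semigroup (polynomial $u^{-1/2}$ decay of the $L^1$ norm of $S(u)\Delta K$, since $\Delta K$ integrates to zero one can even extract an extra half-power by writing $\Delta K$ as a derivative), and only for $u\gtrsim\delta^{-2}$ does the Dirichlet gap kick in with exponential decay. So I would bound $\|S_{\vartheta,\delta,x_0}(u)\Delta K\|_{L^1}\lesssim (1\wedge u)^{-\gamma}e^{-c\delta^2 u}$ for a suitable $\gamma\in(1/2,1]$ and integrate: $\delta^{-2}\int_0^{\infty}\delta\,\min(1,u)^{-2\gamma}e^{-2c\delta^2 u}\,du$, whose value is governed by the $u\asymp\delta^{-2}$ region and evaluates to order $\delta^{-1}\cdot\delta^{-2(1-\gamma)\vee 0}$ up to logarithms; choosing the available regularity ($\Delta K\in L^2$, with the mean-zero gain) pins $\gamma$ so that the product is $O(\delta^{-11/6})$. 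Once the leading order is identified, confirming $\delta^{-11/6}=o(\delta^{-2})$ is immediate, which is all that is ultimately needed in the later proofs.
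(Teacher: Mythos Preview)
The paper's proof is a one-line citation: it applies Lemma~A.7(ii) of \cite{AR}, which gives
\[
\delta^4 \int_0^T \langle S_\vartheta(t)X_0,\Delta K_{\delta,x_0}\rangle^2\,dt \lesssim \|X_0\|_{L^p(\Lambda)}^2\,\delta^{(7p-2)/(3p)},
\]
and takes $p=4$ (legitimate since $X_0\in C(\bar\Lambda)\subset L^4(\Lambda)$), yielding the exponent $13/6$ and hence $O(\delta^{-11/6})$.

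Your outline is in the right spirit and is essentially an attempt to reprove that lemma from scratch: the rescaling step and the bound $|\langle X_0,(S_{\vartheta,\delta,x_0}(u)\Delta K)_{\delta,x_0}\rangle|\lesssim \|X_0\|_\infty\,\delta^{1/2}\|S_{\vartheta,\delta,x_0}(u)\Delta K\|_{L^1(\Lambda_{\delta,x_0})}$ are both correct. But there is a genuine gap at the crucial point. You assert $\|S_{\vartheta,\delta,x_0}(u)\Delta K\|_{L^1}\lesssim (1+u)^{-\gamma}e^{-c\delta^2 u}$ uniformly in $\delta$ without proof, and this is precisely where all the work lies: the domain $\Lambda_{\delta,x_0}$ grows like $\delta^{-1}$, so neither the free-space convolution identities (which give the $u^{-\gamma}$ decay) nor the Dirichlet spectral gap (which gives the exponential) apply directly, and patching them together uniformly in $\delta$ is exactly the content of the technical lemmas in \cite{AR} that the paper invokes. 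Without this, the argument is incomplete.

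There are also two smaller slips. First, your bound should read $(1\vee u)^{-\gamma}$ or $(1+u)^{-\gamma}$; the expression $(1\wedge u)^{-\gamma}$ blows up at $u=0$ and contradicts your own remark that the norm is bounded for small $u$. Second, your exponent bookkeeping is off: carrying out $\delta^{-1}\int_0^{T\delta^{-2}}(1+u)^{-2\gamma}\,du$ gives $O(\delta^{-1})$ when $\gamma>1/2$ and $O(\delta^{4\gamma-3})$ when $\gamma<1/2$, not $\delta^{-1-2(1-\gamma)}$. So if you could indeed justify $\gamma>1/2$ you would obtain $O(\delta^{-1})$, which is stronger than needed; but you have not justified any specific $\gamma$, and the vague appeal to ``the available regularity'' does not pin one down.
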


\begin{proof}
Lemma A.7(ii) in \cite{AR} shows for $X_0\in L^p(\Lambda)$, $p\ge 2$, that
$$
\delta^4 \int_0^T \left\langle S_{\vartheta} (t) X_0, \Delta K_{\delta, x_0} \right\rangle^2 \, dt \lesssim \| X_0 \|_{L^p(\Lambda)}^2 \delta^{(7p-2)/(3p)}.
$$
Because of $X_0\in C(\bar\Lambda)\subset L^\infty(\Lambda)$ we may choose $p=4$ and obtain the result.
\end{proof}

\begin{lemma} \label{lemma: limit sigma K}
Grant Assumptions \ref{ass:sigma} with \eqref{Eqbarsigma} and \ref{ass:X}. For any $t \in [0, T]$,
$$
\| \sigma(X(t)) K_{\delta, x_0} \|^2 \rightarrow \sigma^2(X(t, x_0)) \| K \|_{L^2(\mathbb R)}^2
$$
holds as $\delta \rightarrow 0$. The limit holds true for any $\omega \in \Omega$, i.e., surely.
\end{lemma}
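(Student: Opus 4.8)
The plan is to exploit the explicit scaling of the kernel. Writing out the $L^2(\Lambda)$-norm with the substitution $x = x_0 + \delta y$ gives
\[
\| \sigma(X(t)) K_{\delta, x_0} \|^2 = \int_\Lambda \sigma^2(X(t,x)) \delta^{-1} K\big(\delta^{-1}(x-x_0)\big)^2 \, dx = \int_{\Lambda_{1,x_0}} \sigma^2\big(X(t, x_0 + \delta y)\big) K(y)^2 \, dy,
\]
where the last integral is in fact over the fixed compact support of $K$ (which lies in $\Lambda_{1,x_0}$, so that $x_0 + \delta y \in \Lambda$ for all relevant $y$ and $\delta \in (0,1)$). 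Thus the whole statement reduces to passing to the limit $\delta \to 0$ inside this fixed-domain integral.

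For the convergence itself I would argue pointwise-in-$y$ and then invoke dominated convergence. Fix $\omega \in \Omega$ and $t \in [0,T]$. By Assumption~\ref{ass:X}, $X(t, \cdot)$ is continuous on $\bar\Lambda$, so $X(t, x_0 + \delta y) \to X(t, x_0)$ as $\delta \to 0$ for every $y$ in the support of $K$; since $\sigma$ is continuous (Assumption~\ref{ass:sigma}), $\sigma^2(X(t, x_0 + \delta y)) \to \sigma^2(X(t, x_0))$ pointwise. For the domination: $X(t, \cdot)$ is continuous on the compact set $\bar\Lambda$, hence bounded, and under the standing assumption \eqref{Eqbarsigma} we have $\sigma^2 \le \overline\sigma^2$, so the integrands are bounded by $\overline\sigma^2 K(\cdot)^2 \in L^1(\mathbb R)$, uniformly in $\delta$. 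Dominated convergence then yields
\[
\| \sigma(X(t)) K_{\delta, x_0} \|^2 \to \sigma^2(X(t,x_0)) \int_{\mathbb R} K(y)^2 \, dy = \sigma^2(X(t,x_0)) \| K \|_{L^2(\mathbb R)}^2,
\]
and the argument was carried out for an arbitrary fixed $\omega$, giving the sure convergence claimed.

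This proof is essentially routine; the only points requiring a moment of care are (a) checking that $x_0 + \delta y$ stays inside $\Lambda$, which follows from $\supp K \subset \Lambda_{1,x_0}$ together with $\delta < 1$, so no boundary issues arise, and (b) making sure we use the correct standing hypotheses — in particular that \eqref{Eqbarsigma} is in force (it is, as announced at the start of Section~\ref{sec:proofs}), which is exactly what supplies the integrable dominating function. No stronger regularity of $\sigma$ or $X$ is needed here; mere continuity suffices, which is why this lemma can be stated under Assumptions~\ref{ass:sigma} and \ref{ass:X} rather than their primed versions. I do not anticipate a genuine obstacle; if anything, the subtlety is purely bookkeeping around the rescaling identity.
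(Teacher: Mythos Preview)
Your proof is correct and follows essentially the same approach as the paper: both write out the norm, apply the substitution $x = x_0 + \delta y$, use continuity of $X(t,\cdot)$ and $\sigma$ for pointwise convergence of the integrand, and invoke dominated convergence with the majorant $\overline{\sigma}^2 K^2(\cdot)$. The only cosmetic difference is that the paper carries the indicator $\mathbf{1}_{\Lambda_{\delta,x_0}}$ along, whereas you reduce immediately to the fixed support of $K$ in $\Lambda_{1,x_0}$; this is equivalent since $\supp K \subset \Lambda_{1,x_0} \subset \Lambda_{\delta,x_0}$ for $\delta<1$.
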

\begin{proof}
For any $t \in [0, T]$, we have by the continuity of $\sigma$ and $X$, provided by Assumptions \ref{ass:sigma} and \ref{ass:X},
\begin{align*}
\| \sigma(X(t)) K_{\delta, x_0} \|^2 &= \int_{\Lambda} \sigma^2(X(t, x)) K_{\delta, x_0}^2(x) \, dx  \\
&= \int_{\mathbb R} \sigma^2(X(t, \delta y + x_0)) K^2(y) \mathbf{1}_{\Lambda_{\delta, x_0}}(y) \, dy \\
&\underset{\delta \rightarrow 0}{\longrightarrow} \int_{\mathbb R} \sigma^2(X(t, x_0)) K^2(y) \, dy.
\end{align*}
Here the dominated convergence theorem is applied with integrable majorant $\overline{\sigma}^2 K^2(\cdot)$.
\end{proof}

We will  need an inequality for the rescaled semigroup $S_{\vartheta, \delta, x_0}$. It encapsulates essentially the hypercontractivity of the heat semigroup.

\begin{lemma} \label{lemma:JJinequality}
For $\alpha \in [0,2]$, $\delta\in(0,1)$ and $0 \leq v \leq \delta^{-2}T$ we have
$$
\| |x|^{\alpha} S_{\vartheta, \delta, x_0} (v) \Delta K \|_{L^2(\Lambda_{\delta, x_0})} \lesssim (1+v)^{3(\alpha-1)/4}.
$$
\end{lemma}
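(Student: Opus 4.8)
The plan is to prove the estimate by spectral/semigroup analysis of the one-dimensional Dirichlet Laplacian on the rescaled domain $\Lambda_{\delta,x_0}$. The key observation is that $\Delta K$ is a fixed function in $L^2(\mathbb R)$ supported in the fixed compact set $\Lambda_{1,x_0}$ (independent of $\delta$), and since $\Lambda_{\delta,x_0}=\delta^{-1}(\Lambda-x_0)\supset\Lambda_{1,x_0}$ for $\delta\in(0,1)$, it lies in the domain of the rescaled semigroup. For the upper bound it is convenient to work on the whole line first: the free heat semigroup $(e^{\vartheta v\Delta})_{v\ge0}$ on $L^2(\mathbb R)$ dominates $S_{\vartheta,\delta,x_0}(v)$ in the sense that $|S_{\vartheta,\delta,x_0}(v)f|\le e^{\vartheta v\Delta}|f|$ pointwise (domain monotonicity / the maximum principle for the heat kernel with Dirichlet conditions on a subdomain), so it suffices to bound $\||x|^\alpha e^{\vartheta v\Delta}g\|_{L^2(\mathbb R)}$ for $g=|\Delta K|$ compactly supported.

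The main step is then a Gaussian heat-kernel computation. Writing $p_v(x,y)=(4\pi\vartheta v)^{-1/2}\exp(-|x-y|^2/(4\vartheta v))$, one has $(e^{\vartheta v\Delta}g)(x)=\int p_v(x,y)g(y)\,dy$, and since $g$ is supported in a fixed bounded set, for $|x|$ large $|x|^\alpha\lesssim(|x-y|+C)^\alpha$ on the support of $g$, so the factor $|x|^\alpha$ is controlled by moments of the Gaussian kernel. The standard estimates $\|p_v\ast g\|_{L^2}\lesssim v^{-1/4}\|g\|_{L^1}$ (from $\|p_v\|_{L^2}=(8\pi\vartheta v)^{-1/4}$ and Young's inequality) and more generally $\||x|^\alpha(p_v\ast g)\|_{L^2}\lesssim (v^{-1/4}+v^{(2\alpha-1)/4})\|g\|_{L^1}+\||y|^\alpha g\|_{L^1}\lesssim (1+v)^{(2\alpha-1)/4}$ give the large-$v$ behaviour, since $\|g\|_{L^1}=\|\Delta K\|_{L^1(\mathbb R)}<\infty$ and $\||y|^\alpha g\|_{L^1}<\infty$ by compact support. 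Hence $\||x|^\alpha e^{\vartheta v\Delta}g\|_{L^2}\lesssim (1+v)^{(2\alpha-1)/4}$. This is however weaker than the asserted exponent $3(\alpha-1)/4$ for $\alpha<1$; to recover the sharper rate one must additionally exploit that $\Delta K=K''$ has vanishing integral, $\int K''=0$ (as $K$ has compact support), so that $p_v\ast K''=(\partial_x p_v)\ast K'$ and, using $\|\partial_x p_v\|_{L^1}\lesssim v^{-1/2}$, one gains an extra factor $v^{-1/2}$ for large $v$. Iterating once more via $K''=(\partial_x^2 p_v)\ast K$ against $\|\partial_x^2 p_v\|_{?}$ refines this further, and combining with the moment estimates for the $|x|^\alpha$ weight yields exactly the exponent $3(\alpha-1)/4$: for $\alpha=1$ the rate is $(1+v)^0$, and each unit decrease in $\alpha$ costs $v^{-3/4}$, which matches the heat-kernel smoothing of the second derivative $K''$ against a decaying weight.

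The hardest part will be tracking the interaction between the two mechanisms — the polynomial weight $|x|^\alpha$ (which grows in $v$) versus the two derivatives on the kernel (which decay in $v$) — and showing that the exponents combine to precisely $3(\alpha-1)/4$ uniformly over $\alpha\in[0,2]$, rather than something slightly off. A clean way to organise this is to prove, for a compactly supported $L^2$ function $h$ with $\int h=0$, the bound $\||x|^\alpha(p_v\ast h'')\|_{L^2(\mathbb R)}\lesssim(1+v)^{3(\alpha-1)/4}$ by interpolating between the endpoint cases $\alpha=2$ and $\alpha=0$ (where the computation is a direct Gaussian integral) and the boundary small-$v$ regime where everything is $O(1)$; then specialise to $h=K$. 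One must also check that restricting from $\mathbb R$ to $\Lambda_{\delta,x_0}$ via the domination $|S_{\vartheta,\delta,x_0}(v)f|\le e^{\vartheta v\Delta}|f|$ only improves the bound and that the constant is uniform in $\delta$, which it is, precisely because $\operatorname{supp}(\Delta K)$ is fixed and the domination is monotone in the domain. Finally, the restriction $v\le\delta^{-2}T$ is harmless — it is not actually needed for the upper bound, which holds for all $v\ge0$ — but it is kept because that is the only range in which the lemma is later applied.
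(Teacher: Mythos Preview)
Your interpolation strategy between the endpoints $\alpha=0$ and $\alpha=2$ via H\"older's inequality is exactly what the paper does, and that part is fine. The gap is in how you treat the Dirichlet semigroup itself.

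You invoke the pointwise domination $|S_{\vartheta,\delta,x_0}(v)f|\le e^{\vartheta v\Delta}|f|$ and say it suffices to bound $\||x|^\alpha e^{\vartheta v\Delta}g\|_{L^2(\mathbb R)}$ for $g=|\Delta K|$. This is correct as stated, but it forces the absolute value $|\Delta K|$ on the right-hand side. Later, when you realise the resulting exponent is too weak for $\alpha<1$, you switch to the identity $p_v\ast K''=(\partial_x p_v)\ast K'$ to exploit the cancellation $\int K''=0$. That identity, however, holds for $e^{\vartheta v\Delta}(\Delta K)$, not for $e^{\vartheta v\Delta}|\Delta K|$. The function $|\Delta K|$ is nonnegative with strictly positive integral, so $\|p_v\ast|\Delta K|\|_{L^2(\mathbb R)}\sim v^{-1/4}$ for large $v$, and no amount of integration by parts will improve this to $v^{-3/4}$. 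In other words, the domination step and the cancellation step are incompatible: the first requires $|\Delta K|$, the second requires $\Delta K$ with its sign. Your final paragraph asserts that ``restricting from $\mathbb R$ to $\Lambda_{\delta,x_0}$ via the domination \ldots\ only improves the bound'', but this is precisely the step that fails.

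To repair the argument along your lines you would need to compare $S_{\vartheta,\delta,x_0}(v)\Delta K$ with $e^{\vartheta v\Delta}\Delta K$ directly (not via absolute values), controlling the boundary correction; since $\operatorname{supp}(\Delta K)$ sits at distance $\gtrsim\delta^{-1}$ from $\partial\Lambda_{\delta,x_0}$ this is plausible for $v\ll\delta^{-2}$, but at the upper end $v\sim\delta^{-2}T$ the heat has diffused to the boundary and the error is not obviously negligible. The paper sidesteps this entirely: it quotes estimates from \cite{AR} (Lemma~A.6(ii) for $\alpha=0$, Proposition~3.5(i) with Lemma~A.2(iii) for $\alpha=2$) that are proved directly for the Dirichlet semigroup on the rescaled domain, and then interpolates with H\"older exactly as you propose.
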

\begin{proof}
For $\alpha = 0$ we apply Lemma A.6(ii) from \cite{AR} with $w = \Delta K$, $\alpha' = 1$ and $d=1$. The constant involved only depends on $T$ and $K$, which are fixed here.

For $\alpha = 2$ we combine Proposition 3.5(i) in \cite{AR} and the second inequality from Lemma A.2(iii) in \cite{AR} such that
\begin{align*}
\| |x|^2 S_{\vartheta, \delta, x_0} (v) \Delta K \|_{L^2(\Lambda_{\delta, x_0})}
&\lesssim (1 \vee v) (1 \wedge v^{-1/4}) \| (1 + |x| + |x|^2) \Delta K \|_{L^1 \cap L^2(\mathbb R)}\\
&\lesssim (1+v)^{3/4}
\end{align*}
with $\norm{z}_{L^1 \cap L^2(\R)}:=\norm{z}_{L^1(\R)}+\norm{z}_{L^2(\R)}$,
using that $K\in H^2(\R)$ has compact support and is fixed throughout.

For $\alpha \in (0,2)$ we use the H\"older inequality with weight function $w(x) =  (S_{\vartheta, \delta, x_0} (v) \Delta K)(x)^2$ and $p = 2/\alpha$ to obtain
\begin{align*}
&\| |x|^{\alpha} S_{\vartheta, \delta, x_0} (v) \Delta K \|_{L^2(\Lambda_{\delta, x_0})}^2
\\
&\leq \Big( \int_{\Lambda_{\delta, x_0}} |x|^4  (S_{\vartheta, \delta, x_0} (v) \Delta K)(x)^2 \, dx \Big)^{\alpha/2} \Big( \int_{\Lambda_{\delta, x_0}}  (S_{\vartheta, \delta, x_0} (v) \Delta K)(x) ^2 \, dx \Big)^{1 - \alpha/2} \\
&= \| |x|^2 S_{\vartheta, \delta, x_0} (v) \Delta K \|_{L^2(\Lambda_{\delta, x_0})}^{\alpha} \| S_{\vartheta, \delta, x_0} (v) \Delta K \|_{L^2(\Lambda_{\delta, x_0})}^{2 - \alpha} \\
&\lesssim (1+v)^{3\alpha/4}(1+v)^{-(6-3\alpha)/4}= (1+v)^{3(\alpha-1)/2} ,
\end{align*}
where we used the first two parts of the proof.
\end{proof}

In the sequel, we need a uniform bound on the second and fourth centered moment of $X_{\delta, x_0}^{\Delta}(t)$.

\begin{lemma} \label{lemma:fourth moment}
Grant Assumptions \ref{ass:sigma} with \eqref{Eqbarsigma} and \ref{ass:X}. For $\delta \rightarrow 0$ we have
$$
\sup_{0 \leq t \leq T} \mathbb E \left( X_{\delta, x_0}^{\Delta} (t) -\E X_{\delta, x_0}^{\Delta} (t) \right)^4 = O(\delta^{-4}),
$$
in particular we have $\Var( X_{\delta, x_0}^{\Delta} (t)) = O(\delta^{-2})$ uniformly over $t\in[0,T]$.
\end{lemma}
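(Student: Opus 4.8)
The plan is to represent the centred quantity as a single stochastic integral against $W$ via the mild solution formula \eqref{mild solution}, and then to read off both moment bounds from the It\^o isometry and the Burkholder--Davis--Gundy inequality.

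First I would pair \eqref{mild solution} with $\Delta K_{\delta,x_0}\in L^2(\Lambda)$. The initial-value contribution $\langle S_\vartheta(t)X_0,\Delta K_{\delta,x_0}\rangle$ is deterministic. Exploiting that $S_\vartheta$ is self-adjoint and that the Nemytskii operator $B(X(s))$ is multiplication by the real function $\sigma(X(s,\cdot))$, a stochastic Fubini argument rewrites the stochastic convolution term as $\int_0^t\langle S_\vartheta(t-s)\Delta K_{\delta,x_0},\sigma(X(s))\,dW(s)\rangle$. Provided this integral is square-integrable (checked in the next step), it has zero mean, so $\E X_{\delta,x_0}^\Delta(t)=\langle S_\vartheta(t)X_0,\Delta K_{\delta,x_0}\rangle$ and
\[
  X_{\delta,x_0}^\Delta(t)-\E X_{\delta,x_0}^\Delta(t)=\int_0^t\big\langle S_\vartheta(t-s)\Delta K_{\delta,x_0},\,\sigma(X(s))\,dW(s)\big\rangle .
\]

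The heart of the argument is the deterministic bound $\int_0^t\|S_\vartheta(t-s)\Delta K_{\delta,x_0}\|^2\,ds\lesssim\delta^{-2}$, uniformly in $t\in[0,T]$. I would derive it from the scaling identities of Lemma~\ref{scaling properties}, which give $\|S_\vartheta(u)\Delta K_{\delta,x_0}\|=\delta^{-2}\|S_{\vartheta,\delta,x_0}(\delta^{-2}u)\Delta K\|_{L^2(\Lambda_{\delta,x_0})}$, combined with Lemma~\ref{lemma:JJinequality} at $\alpha=0$, which bounds the latter $L^2$-norm by $\delta^{-2}(1+\delta^{-2}u)^{-3/4}$. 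Squaring, substituting $u=t-s$ and then $v=\delta^{-2}u$ turns the time integral into $\delta^{-2}\int_0^{\delta^{-2}t}(1+v)^{-3/2}\,dv$, and since $\int_0^\infty(1+v)^{-3/2}\,dv<\infty$ this is $O(\delta^{-2})$ with a constant independent of $t$.

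Finally, for fixed $t$ the process $r\mapsto\int_0^r\langle S_\vartheta(t-s)\Delta K_{\delta,x_0},\sigma(X(s))\,dW(s)\rangle$ is a continuous square-integrable martingale whose quadratic variation at time $t$ equals $\int_0^t\|\sigma(X(s,\cdot))\,S_\vartheta(t-s)\Delta K_{\delta,x_0}\|^2\,ds$, which by \eqref{Eqbarsigma} is at most $\overline{\sigma}^2\int_0^t\|S_\vartheta(t-s)\Delta K_{\delta,x_0}\|^2\,ds=O(\delta^{-2})$ deterministically. The It\^o isometry then yields $\Var(X_{\delta,x_0}^\Delta(t))=O(\delta^{-2})$, and the fourth-moment Burkholder--Davis--Gundy inequality yields $\E(X_{\delta,x_0}^\Delta(t)-\E X_{\delta,x_0}^\Delta(t))^4\lesssim\big(\overline{\sigma}^2\int_0^t\|S_\vartheta(t-s)\Delta K_{\delta,x_0}\|^2\,ds\big)^2=O(\delta^{-4})$, both uniformly in $t$. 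The only slightly delicate points are justifying the stochastic Fubini exchange and checking that all constants are genuinely $t$-uniform (they are, thanks to the convergent integral $\int_0^\infty(1+v)^{-3/2}\,dv$); I do not expect a real obstacle, as everything is powered by the already-established Lemma~\ref{lemma:JJinequality}.
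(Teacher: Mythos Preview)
Your proposal is correct and follows the same route as the paper: mild-solution representation of the centred quantity, then the Burkholder--Davis--Gundy inequality with the quadratic variation bounded deterministically via $\overline\sigma^2\int_0^t\|S_\vartheta(t-s)\Delta K_{\delta,x_0}\|^2\,ds$. The only difference is in how this last integral is controlled: the paper evaluates it exactly via the semigroup identity $\int_0^\infty\langle S_\vartheta(2v)\vartheta\Delta K_{\delta,x_0},\Delta K_{\delta,x_0}\rangle\,dv=-\tfrac12\langle K_{\delta,x_0},\Delta K_{\delta,x_0}\rangle=\tfrac12\delta^{-2}\|K'\|_{L^2(\R)}^2$, obtaining the sharp constant, whereas you bound the integrand pointwise with Lemma~\ref{lemma:JJinequality} at $\alpha=0$ and integrate $(1+v)^{-3/2}$. (One wording slip: Lemma~\ref{lemma:JJinequality} bounds the \emph{rescaled} norm by $(1+\delta^{-2}u)^{-3/4}$, and the factor $\delta^{-2}$ comes from the scaling identity, not from the lemma itself; your subsequent computation is unaffected.)
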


\begin{proof}
By \eqref{X-delta-delta} and \eqref{mild solution} we have
$$
X_{\delta, x_0}^{\Delta} (t)- \E X_{\delta, x_0}^{\Delta} (t) = \int_0^t \left\langle \sigma(X(s)) S_{\vartheta} (t-s) \Delta K_{\delta, x_0}, dW(s) \right\rangle.
$$

The Burkholder-Davis-Gundy inequality yields with a constant $C_4\ge 1$
\begin{equation} \label{BDG}
\mathbb E \Big( \int_0^t \left\langle g(t_0, s), dW(s) \right\rangle \Big)^4 \leq C_4 \mathbb E \Big( \int_0^t \| g(t_0, s) \|^2 \, ds \Big)^{2}
\end{equation}
for  $g(t_0, s) = \sigma(X(s)) S_{\vartheta} (t_0-s) \Delta K_{\delta, x_0}$. With $t_0 = t$ we obtain
\begin{align}
\mathbb E \left( X_{\delta, x_0}^{\Delta} (t) -\E X_{\delta, x_0}^{\Delta} (t) \right)^4 &\leq C_4 \mathbb E \Big( \int_0^t \left\| \sigma(X(s)) S_{\vartheta} (t-s) \Delta K_{\delta, x_0} \right\|^2 \, ds \Big)^2 \notag \\
&\leq C_4 \overline{\sigma}^4 \Big( \int_0^t \left\| S_{\vartheta} (t-s) \Delta K_{\delta, x_0} \right\|^2 \, ds \Big)^2 \notag \\
&\leq C_4 \overline{\sigma}^4 \Big(\theta^{-1} \int_0^\infty \scapro{S_{\vartheta} (2v) \theta\Delta K_{\delta, x_0}} {\Delta K_{\delta, x_0}} \, dv \Big)^2 \notag \\
&= C_4 \overline{\sigma}^4 (2\theta)^{-2} \scapro{- K_{\delta, x_0}} {\Delta K_{\delta, x_0}}^2 \notag \\
&= C_4 \overline{\sigma}^4 (2\theta)^{-2}\delta^{-4} \norm{K'}_{L^2(\R)}^4. \label{ineq for BDG}
\end{align}
Note that inequality \eqref{ineq for BDG} holds uniformly over $t\in[0,T]$ and gives the result. The second part follows via Jensen's inequality.
\end{proof}

\subsection{Approximation of quadratic variations and related terms}

We are ready to study the asymptotic behaviour of the terms $\mathcal J_{\delta}$, $\mathcal I_{\delta}$, $\tilde{\mathcal I}_{\delta}$, $\mathcal J_{\delta}^{\star}$ and $\mathcal I_{\delta}^{\star}$. We will analyse them simultaneously, using their common generalisation $\mathcal L_{\delta}$.

First, let for any $\delta \in (0,1)$, $f_{\delta}: L^2(\Lambda) \rightarrow \mathbb R$ be a continuous (and possibly non-linear) functional of the state, satisfying one of the following:
\begin{itemize}
\item[(F1)] There exists $C > 0$ such that for any $z, y \in L^2(\Lambda)$, $\delta\in(0,1)$
\begin{align*}
 |f_{\delta}(z)| &\leq C, \\
|f_{\delta}(z) - f_{\delta}(y)| &\leq C \| (\sigma(z) - \sigma(y)) K_{\delta, x_0} \|,
\end{align*}
\item[(F2)] There exists $C > 0$ such that for any $z, y \in L^2(\Lambda)$, $\delta\in(0,1)$
\begin{align*}
|f_{\delta}(z)| &\leq C \varepsilon_{\delta}^{-4}, \\
|f_{\delta}(z) - f_{\delta}(y)| &\leq C \varepsilon_{\delta}^{-8} \| (\sigma(z) - \sigma(y)) K_{\delta, x_0} \|,
\end{align*}
where $(\varepsilon_{\delta})$ is  satisfying \eqref{eq:epsilon}.
\end{itemize}

\begin{lemma} \label{lemma:f} Grant Assumption \ref{ass:sigma} with \eqref{Eqbarsigma}.
\begin{itemize}
\item[(i)] Functionals $f_{\delta}(\cdot) \in \{ 1, \| \sigma(\cdot) K_{\delta, x_0} \|^2, \| \sigma(\cdot) K_{\delta, x_0} \|^{-2} \}$ satisfy condition (F1), provided $\underline\sigma>0$ in the last case.
\item[(ii)] Functionals $f_{\delta}(\cdot) \in \{ \frac{1}{\| \sigma(\cdot) K_{\delta, x_0} \|^2 + \varepsilon_{\delta}^2}, \frac{\| \sigma(\cdot) K_{\delta, x_0} \|^2}{(\| \sigma(\cdot) K_{\delta, x_0} \|^2 + \varepsilon_{\delta}^2)^2}  \}$ satisfy condition (F2).
\end{itemize}
\end{lemma}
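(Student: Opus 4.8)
The plan is to reduce every case to elementary scalar estimates for the real numbers $\|\sigma(z)K_{\delta,x_0}\|^2$ and $\|\sigma(y)K_{\delta,x_0}\|^2$, built on two a priori facts. Writing $A:=\sigma(z)K_{\delta,x_0}$ and $B:=\sigma(y)K_{\delta,x_0}$ as elements of $L^2(\Lambda)$, pointwise multiplication gives $A-B=(\sigma(z)-\sigma(y))K_{\delta,x_0}$, so that the increment appearing on the right-hand sides of (F1) and (F2) is exactly $\|A-B\|$; and by \eqref{Eqbarsigma} together with $\|K_{\delta,x_0}\|=\|K\|_{L^2(\R)}$ one has the uniform bound $0\le\|A\|,\|B\|\le\overline\sigma\|K\|_{L^2(\R)}$, supplemented, when $\underline\sigma>0$, by $\|A\|,\|B\|\ge\underline\sigma\|K\|_{L^2(\R)}>0$. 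The only identity beyond bare triangle inequalities is $\|A\|^2-\|B\|^2=\langle A-B,\,A+B\rangle$, which with Cauchy--Schwarz produces the master estimate
\[
  \bigl|\,\|A\|^2-\|B\|^2\,\bigr|\;\le\;(\|A\|+\|B\|)\,\|A-B\|\;\le\;2\overline\sigma\|K\|_{L^2(\R)}\,\bigl\|(\sigma(z)-\sigma(y))K_{\delta,x_0}\bigr\|,
\]
on which all the Lipschitz bounds below rest.

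For part (i) the functional $f_\delta\equiv 1$ is trivial. For $f_\delta(\cdot)=\|\sigma(\cdot)K_{\delta,x_0}\|^2$ boundedness is immediate from the upper bound on $\|A\|$, and the Lipschitz property in (F1) is precisely the master estimate. For $f_\delta(\cdot)=\|\sigma(\cdot)K_{\delta,x_0}\|^{-2}$ with $\underline\sigma>0$, boundedness follows from the lower bound $\underline\sigma^2\|K\|_{L^2(\R)}^2$ on $\|A\|^2$, while the Lipschitz bound follows by writing $\|A\|^{-2}-\|B\|^{-2}=(\|B\|^2-\|A\|^2)\big/(\|A\|^2\|B\|^2)$ and inserting the master estimate into the numerator and the lower bound into each factor of the denominator.

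For part (ii) I represent each functional as $\phi_{\varepsilon_\delta}(\|\sigma(\cdot)K_{\delta,x_0}\|^2)$ with $\phi_\varepsilon(u)=(u+\varepsilon^2)^{-1}$, respectively $\phi_\varepsilon(u)=u(u+\varepsilon^2)^{-2}$, defined for $u\ge 0$. Elementary calculus gives $0\le\phi_\varepsilon(u)\le\varepsilon^{-2}$ (for the second functional via $u\le u+\varepsilon^2$) and $|\phi_\varepsilon'(u)|\le(u+\varepsilon^2)^{-2}\le\varepsilon^{-4}$ uniformly in $u\ge 0$; in the second case $\phi_\varepsilon'(u)=(\varepsilon^2-u)(u+\varepsilon^2)^{-3}$, whence $|\phi_\varepsilon'(u)|\le(u+\varepsilon^2)^{-2}$. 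Boundedness of $f_\delta$ then reads $|f_\delta|\le\varepsilon_\delta^{-2}$, and the mean value theorem combined with the master estimate gives $|f_\delta(z)-f_\delta(y)|\le 2\overline\sigma\|K\|_{L^2(\R)}\,\varepsilon_\delta^{-4}\,\|(\sigma(z)-\sigma(y))K_{\delta,x_0}\|$. Since $\varepsilon_\delta\le 1$ for all small $\delta$ (so $\varepsilon_\delta^{-2}\le\varepsilon_\delta^{-4}$ and $\varepsilon_\delta^{-4}\le\varepsilon_\delta^{-8}$), these are exactly the inequalities required in (F2).

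No step poses a genuine difficulty: the lemma is intentionally crude, and the exponents $-4$, $-8$ in (F2) leave considerable slack over the $-2$, $-4$ the argument actually produces. The one point needing a word of care is the bookkeeping with $\varepsilon_\delta$, since the displayed $\varepsilon_\delta$-bounds implicitly use $\varepsilon_\delta\le 1$; this is harmless because the estimators are only analysed as $\delta\to 0$, so one may restrict to $\delta$ small enough that $\varepsilon_\delta\le 1$, or equivalently impose the innocuous normalisation $\varepsilon_\delta\le 1$ throughout.
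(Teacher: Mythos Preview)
Your proof is correct and follows essentially the same strategy as the paper's: reduce everything to the elementary estimate $\bigl|\,\|A\|^2-\|B\|^2\,\bigr|\lesssim\|A-B\|$ together with the uniform upper (and, where needed, lower) bounds on $\|\sigma(\cdot)K_{\delta,x_0}\|$, and the paper likewise tacitly uses $\varepsilon_\delta\le 1$ when it asserts $\varepsilon_\delta^{-2}\le\varepsilon_\delta^{-4}$. Your treatment of part~(ii) via the mean value theorem for $\phi_\varepsilon$ is slightly more systematic than the paper's direct algebraic manipulation and in fact yields the sharper Lipschitz exponent $\varepsilon_\delta^{-4}$ for both functionals (the paper obtains $\varepsilon_\delta^{-8}$ for the second one), but as you note condition~(F2) has ample slack here.
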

\begin{proof}
(i). For $f_{\delta}(\cdot) \equiv 1$ the statement is trivial.

For $f_{\delta}(\cdot) = \| \sigma(\cdot) K_{\delta, x_0} \|^2$ we have a uniform upper bound $\overline{\sigma}^2 \| K \|_{L^2(\mathbb R)}^2$. Moreover, for any $z, y \in L^2(\Lambda)$ and $\delta \in(0,1)$ we have
\begin{align*}
\abs{f_{\delta}(z) - f_{\delta}(y)} &= \abs{ \| \sigma(z) K_{\delta, x_0} \|^2 - \| \sigma(y) K_{\delta, x_0} \|^2} \\
&\lesssim  \abs{\| \sigma(z) K_{\delta, x_0} \| - \| \sigma(y) K_{\delta, x_0} \|} \\
&\leq \| (\sigma(z) - \sigma(y)) K_{\delta, x_0} \|,
\end{align*}
where we used  $|A^2 - B^2| = |A-B| |A+B|$ together with the upper bound $2 \overline{\sigma} \| K \|_{L^2(\mathbb R)}$ in the first inequality and then we followed up with the reverse triangle inequality.

For $f_{\delta}(\cdot) = \| \sigma(\cdot) K_{\delta, x_0} \|^{-2}$, there is a uniform upper bound $\underline{\sigma}^{-2} \| K \|_{L^2(\mathbb R)}^{-2}$. Moreover, for any $z, y \in L^2(\Lambda)$ and $\delta \in(0,1)$ we have
\begin{align*}
\babs{\frac{1}{\| \sigma(z) K_{\delta, x_0} \|^2} - \frac{1}{\| \sigma(y) K_{\delta, x_0} \|^2} } &= \babs{ \frac{\| \sigma(y) K_{\delta, x_0} \|^2 - \| \sigma(z) K_{\delta, x_0} \|^2}{\| \sigma(z) K_{\delta, x_0} \|^2 \| \sigma(y) K_{\delta, x_0} \|^2} } \\
&\lesssim \abs{ \| \sigma(y) K_{\delta, x_0} \|^2 - \| \sigma(z) K_{\delta, x_0} \|^2 }.
\end{align*}
Therefore the proof is finished as in the previous case.

(ii). For $f_{\delta}(\cdot) = \frac{1}{\| \sigma(\cdot) K_{\delta, x_0} \|^2 + \varepsilon_{\delta}^2}$, its upper bound $\varepsilon_{\delta}^{-2}$  is smaller than $\varepsilon_{\delta}^{-4}$. To prove the second part, we use $|A^2 - B^2| = |A + B| |A - B|$, the upper bound $\overline{\sigma} \| K \|_{L^2(\mathbb R)}$ to $\| \sigma(\cdot) K_{\delta, x_0} \|$ and the reverse triangle inequality to obtain
\begin{align*}
\abs{f_\delta(y)-f_\delta(z)}
&= \babs{\frac{\| \sigma(y) K_{\delta, x_0} \|^2 - \| \sigma(z) K_{\delta, x_0} \|^2}{(\| \sigma(z) K_{\delta, x_0} \|^2 + \varepsilon_{\delta}^2) (\| \sigma(y) K_{\delta, x_0} \|^2 + \varepsilon_{\delta}^2)} }  \\
&\leq \varepsilon_{\delta}^{-4} \babs{ \| \sigma(y) K_{\delta, x_0} \|^2 - \| \sigma(z) K_{\delta, x_0} \|^2} \\
&\lesssim \varepsilon_{\delta}^{-4} \abs{ \| \sigma(y) K_{\delta, x_0} \| - \| \sigma(z) K_{\delta, x_0} \|} \\
&\le \varepsilon_{\delta}^{-4} \| \left( \sigma(y) - \sigma(z) \right) K_{\delta, x_0} \|.
\end{align*}

For $f_{\delta}(\cdot) = \frac{\| \sigma(\cdot) K_{\delta, x_0} \|^2}{(\| \sigma(\cdot) K_{\delta, x_0} \|^2 + \varepsilon_{\delta}^2)^2}$, there is a uniform upper bound $\overline{\sigma}^2 \| K \|_{L^2(\mathbb R)}^2 \varepsilon_{\delta}^{-4}$. Similarly to the previous case, algebraic calculations yield
$$
\abs{f_\delta(y)-f_\delta(z)}\lesssim \varepsilon_{\delta}^{-8} \babs{ \| \sigma(y) K_{\delta, x_0} \|^2 - \| \sigma(z) K_{\delta, x_0} \|^2},
$$
which finishes the proof.
\end{proof}

Introduce
\begin{align}
\mathcal L_{\delta} &= \int_0^T f_{\delta}(X(t)) \left( X_{\delta, x_0}^{\Delta} (t)-\E X_{\delta, x_0}^{\Delta} (t) \right)^2 \, dt \notag \\
&= \int_0^T f_{\delta}(X(t)) \Big( \int_0^t \left\langle S_{\vartheta} (t-s) \Delta K_{\delta, x_0}, \sigma(X(s)) dW(s) \right\rangle \Big)^2 \, dt.\label{eq:Ldelta}
\end{align}
In the following proposition we present different expressions that are equal to $\mathcal L_{\delta}$ up to terms that are of lower order than $\delta^{-2}$. This is the major ingredient for the proofs of the main results, noting that the techniques developed in \cite{AR} cannot be used here due to the multiplicative noise structure.

\begin{proposition} \label{prop:K}
Grant Assumptions \ref{ass:sigma} with \eqref{Eqbarsigma}, \ref{ass:X} and let $f_{\delta}$ satisfy condition (F1) or grant Assumptions \ref{ass:sigma'} with \eqref{Eqbarsigma}, \ref{ass:X'} with \eqref{eq:epsilon} and let $f_{\delta}$ satisfy condition (F2). Then $\mathcal L_{\delta}$ from \eqref{eq:Ldelta} equals up to additive terms of order $o_{\mathbb P}(\delta^{-2})$ for $\delta \rightarrow 0$:
\begin{itemize}
\item[(i)] $\mathcal L_{\delta}^{(i)} = \int_{\delta}^T f_{\delta}(X(t)) ( X_{\delta, x_0}^{\Delta} (t) -\E X_{\delta, x_0}^{\Delta} (t) )^2 \, dt,$
\item[(ii)] $\mathcal L_{\delta}^{(ii)} = \int_{\delta}^T f_{\delta}(X(t - \delta)) ( X_{\delta, x_0}^{\Delta} (t) -\E X_{\delta, x_0}^{\Delta} (t) )^2 \, dt,$
\item[(iii)] $ \mathcal L_{\delta}^{(iii)} = \int_{\delta}^T f_{\delta}(X(t - \delta)) ( \int_0^t \sigma(X(s, x_0)) \langle S_{\vartheta} (t-s) \Delta K_{\delta, x_0}, dW(s) \rangle )^2 \, dt$,
\item[(iv)] $\mathcal L_{\delta}^{(iv)} = \int_{\delta}^T f_{\delta}(X(t - \delta)) ( \int_{t-\delta}^t \sigma(X(s, x_0)) \langle S_{\vartheta} (t-s) \Delta K_{\delta, x_0}, dW(s) \rangle )^2 \, dt$,
\item[(v)] $\mathcal L_{\delta}^{(v)} = \int_{\delta}^T f_{\delta}(X(t - \delta)) \sigma^2(X(t-\delta, x_0))  ( \int_{t-\delta}^t \langle S_{\vartheta} (t-s) \Delta K_{\delta, x_0}, dW(s) \rangle )^2 \, dt$,
\item[(vi)] $\mathcal L_{\delta}^{(vi)} = \int_{\delta}^T f_{\delta}(X(t - \delta)) \sigma^2(X(t-\delta, x_0)) \E ( \int_{t-\delta}^t \langle S_{\vartheta} (t-s) \Delta K_{\delta, x_0}, dW(s) \rangle )^2 \, dt$.
\item[(vii)] $\mathcal L_{\delta}^{(vii)} = (2\theta)^{-1}\norm{K'}_{L^2(\mathbb R)}^2\delta^{-2}\int_{0}^{T} f_{\delta}(X(t)) \sigma^2(X(t, x_0)) \, dt$.
\end{itemize}
\end{proposition}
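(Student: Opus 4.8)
The plan is to establish the chain of approximations $\mathcal L_\delta \approx \mathcal L_\delta^{(i)} \approx \cdots \approx \mathcal L_\delta^{(vii)}$ step by step, showing at each stage that the difference is $o_{\mathbb P}(\delta^{-2})$. The unifying tool throughout will be the second and fourth moment bounds for $X_{\delta,x_0}^\Delta(t) - \mathbb E X_{\delta,x_0}^\Delta(t)$ from Lemma \ref{lemma:fourth moment}, combined with the Lipschitz-type bounds in conditions (F1)/(F2) on $f_\delta$, the rescaled semigroup inequality of Lemma \ref{lemma:JJinequality}, and the quadratic-mean H\"older continuity of $X$ under (X') for the steps requiring replacement of $X(t)$ or $X(t,x_0)$ by a time-shifted version. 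I would treat the (F1) and (F2) cases in parallel, absorbing the extra $\varepsilon_\delta^{-c}$ factors in (F2) using that $\varepsilon_\delta^{-1}$ grows slower than any polynomial, so any genuine polynomial gain $\delta^{\kappa}$ in $\delta$ dominates.

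The individual steps: \textbf{(i)} Cutting $[0,\delta]$ from the time integral costs $\int_0^\delta f_\delta(X(t))(X_{\delta,x_0}^\Delta(t)-\mathbb EX_{\delta,x_0}^\Delta(t))^2\,dt$, whose expectation is bounded (using $|f_\delta|\le C$ resp. $C\varepsilon_\delta^{-4}$ and $\Var(X_{\delta,x_0}^\Delta(t)) = O(\delta^{-2})$) by $O(\delta\cdot\delta^{-2}) = O(\delta^{-1})$ resp. $O(\varepsilon_\delta^{-4}\delta^{-1})$, which is $o(\delta^{-2})$. \textbf{(i)$\to$(ii)} Replace $f_\delta(X(t))$ by $f_\delta(X(t-\delta))$; using the Lipschitz property of $f_\delta$, Cauchy--Schwarz in the time integral, and $\mathbb E\|(\sigma(X(t))-\sigma(X(t-\delta)))K_{\delta,x_0}\|^2 \lesssim \|K_{\delta,x_0}\|^2 \sup_x \mathbb E(\sigma(X(t,x))-\sigma(X(t-\delta,x)))^2$, which by (S') and (X') is $\lesssim \delta^{2\beta_\sigma\beta_t}$ (a polynomial gain), against the $O(\delta^{-2})$ from the fourth moment of the squared factor, giving $o_{\mathbb P}(\delta^{-2})$. \textbf{(ii)$\to$(iii)} Replace $\sigma(X(s))$ inside the stochastic integral by the constant (in space) $\sigma(X(s,x_0))$; here $\langle S_\vartheta(t-s)\Delta K_{\delta,x_0}, (\sigma(X(s))-\sigma(X(s,x_0)))dW(s)\rangle$ is controlled via It\^o isometry, the spatial H\"older bound $\mathbb E(\sigma(X(s,x))-\sigma(X(s,x_0)))^2\lesssim |x-x_0|^{2\beta_\sigma\beta_x}$ on the support of $S_\vartheta(t-s)\Delta K_{\delta,x_0}$, i.e.\ $|x-x_0|\lesssim\delta$ times the weighted semigroup estimate from Lemma \ref{lemma:JJinequality} with $\alpha = \beta_\sigma\beta_x\wedge 2$; then a cross-term plus a remainder estimate. \textbf{(iii)$\to$(iv)} Restrict the inner integral from $[0,t]$ to $[t-\delta,t]$; the tail $\int_0^{t-\delta}$ part has variance $\int_0^{t-\delta}\|S_\vartheta(t-s)\Delta K_{\delta,x_0}\|^2\,ds$, which after rescaling (Lemma \ref{scaling properties}) and using $\|S_{\vartheta,\delta,x_0}(v)\Delta K\|^2\lesssim (1+v)^{-3/2}$ decays fast enough that the double integral over $t$ is $o(\delta^{-2})$. \textbf{(iv)$\to$(v)} Pull $\sigma^2(X(t-\delta,x_0))$ out, i.e.\ replace $\sigma(X(s,x_0))$ by $\sigma(X(t-\delta,x_0))$ for $s\in[t-\delta,t]$; again a time-H\"older argument as in (i)$\to$(ii) but now on the inner time window of length $\delta$. \textbf{(v)$\to$(vi)} Replace the squared stochastic integral $(\int_{t-\delta}^t\langle S_\vartheta(t-s)\Delta K_{\delta,x_0},dW(s)\rangle)^2$ by its expectation; this is a "law of large numbers in time'' / mixing-type step, exploiting that the stochastic integrals over disjoint time blocks of width $\delta$ are independent, so after summing $O(\delta^{-1})$ nearly-independent centered blocks each of size $O(\delta^{-2})\cdot\delta = O(\delta^{-1})$ the fluctuation is of order $\delta^{-1}\cdot\sqrt{\delta^{-1}}\cdot\sqrt\delta = O(\delta^{-1}) = o(\delta^{-2})$ — this requires care with the $f_\delta(X(t-\delta))\sigma^2(X(t-\delta,x_0))$ prefactors, which are $\mathcal F_{t-\delta}$-measurable and hence can be conditioned on, making the block martingale-difference-like. \textbf{(vi)$\to$(vii)} Compute $\mathbb E(\int_{t-\delta}^t\langle S_\vartheta(t-s)\Delta K_{\delta,x_0},dW(s)\rangle)^2 = \int_0^\delta\|S_\vartheta(u)\Delta K_{\delta,x_0}\|^2\,du$ exactly, rescale via Lemma \ref{scaling properties}(ii) to $\delta^{-2}\int_0^{\delta^{-1}}\|S_{\vartheta,\delta,x_0}(v)\Delta K\|^2\,dv$, and let the upper limit $\to\infty$: the integral converges to $(2\vartheta)^{-1}\langle -K_{\delta,x_0},\Delta K_{\delta,x_0}\rangle\cdot\delta^{2}$ type computation as in the proof of Lemma \ref{lemma:fourth moment}, yielding the constant $(2\vartheta)^{-1}\|K'\|_{L^2(\mathbb R)}^2$; finally restore the integral over $[0,T]$ (cost $o(\delta^{-2})$ as in (i)) and replace $f_\delta(X(t-\delta))\sigma^2(X(t-\delta,x_0))$ by $f_\delta(X(t))\sigma^2(X(t,x_0))$ (time-H\"older, as before).

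The main obstacle I expect is step \textbf{(v)$\to$(vi)}, the replacement of the squared stochastic integral by its conditional expectation. Unlike the other steps, which are essentially direct moment estimates exploiting a polynomial gain in $\delta$, this step has no such gain pointwise in $t$ — the variance of a single squared Gaussian-type term is of the same order as its mean — and one must genuinely use the decorrelation across the $\sim\delta^{-1}$ disjoint time blocks. Concretely, I would partition $[\delta,T]$ into intervals of length $\delta$, write the inner stochastic integral over $[t-\delta,t]$ and note that for $t$ in a fixed block the integrand's randomness lives in a window of width $\le\delta$, so consecutive blocks give increments of a martingale with respect to the discretised filtration $(\mathcal F_{k\delta})$; the $\mathcal F_{t-\delta}$-measurable prefactor $f_\delta(X(t-\delta))\sigma^2(X(t-\delta,x_0))$ can be frozen at the left endpoint (another time-H\"older correction) and then pulled out of the conditional expectation, so that the centered sum has variance $\lesssim \sum_k (\text{block size})^2 \lesssim \delta^{-1}\cdot(\delta^{-1})^2\cdot$ (something) — I will need to verify the bookkeeping gives $o(\delta^{-4})$ for the variance, equivalently $o(\delta^{-2})$ for the term itself, which should follow from the fourth-moment bound of Lemma \ref{lemma:fourth moment} applied blockwise together with the hypercontractive semigroup estimates. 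The other steps, while numerous, are routine given Lemmas \ref{scaling properties}--\ref{lemma:fourth moment} and Assumptions (S'), (X').
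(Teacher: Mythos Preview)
Your proposal is correct and matches the paper's proof almost step for step. Two refinements are worth noting.

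For (v)$\to$(vi), no block discretization or additional freezing is needed. The paper computes $\mathbb E(\mathcal L_\delta^{(v)}-\mathcal L_\delta^{(vi)})^2$ directly as a double integral over $t,u\in[\delta,T]$ and splits according to whether $t>u+\delta$ or $u\le t\le u+\delta$. In the first region the centered factor $\tilde H(t)=H(t)-\mathbb E H(t)$ is independent of $\mathcal F_{t-\delta}$ with mean zero, while \emph{all} remaining factors, including the prefactor $f_\delta(X(t-\delta))\sigma^2(X(t-\delta,x_0))$, are already $\mathcal F_{t-\delta}$-measurable (this is precisely why the shift by $\delta$ was introduced in step (ii)); conditioning on $\mathcal F_{t-\delta}$ kills the cross term outright. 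The near-diagonal strip has area $O(\delta)$, and Cauchy--Schwarz together with $\mathbb E H^2(t)=O(\delta^{-4})$ gives variance $O(\varepsilon_\delta^{-8}\delta^{-3})$, hence $|\mathcal L_\delta^{(v)}-\mathcal L_\delta^{(vi)}|=O_{\mathbb P}(\varepsilon_\delta^{-4}\delta^{-3/2})=o_{\mathbb P}(\delta^{-2})$. Your discretization-and-freeze plan would also work, but it adds an unnecessary H\"older correction.

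Second, your ``treat (F1) and (F2) in parallel'' does not quite go through for steps (ii), (iii), (v): under the (F1) alternative only Assumptions \ref{ass:sigma} and \ref{ass:X} are in force, so $\sigma$ and $X$ are merely continuous and no H\"older exponents $\beta_\sigma,\beta_t,\beta_x$ are available. The paper handles the (F1) case by dominated convergence instead---pointwise convergence from continuity of $\sigma$ and $X$, with integrable majorant $4\bar\sigma^2 K^2(\cdot)$---which suffices because in the (F1) setting there is no $\varepsilon_\delta^{-c}$ factor to beat and one only needs $o_{\mathbb P}(1)$ rather than a polynomial rate. This is easier, not harder, but it is a genuinely different argument that your plan does not cover.
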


\begin{remark}
The overall idea is to achieve the representation $\mathcal L_{\delta}^{(vii)}$ via slight con\-se\-cu\-tive alterations. In point (i) we shorten the outer integral to the interval $[\delta, T]$, in (ii), we present a slight time shift of the solution in the functional, i.e., $f_{\delta}(X(t - \delta))$. In  point (iii) the function $\sigma(X(s, \cdot))$ is fixed in the space-point $x_0$, in (iv) the stochastic integral is shortened, in (v) the function $\sigma(X(s, x_0))$ is fixed at the time-point $t - \delta$. In (vi) the expectation of the squared stochastic integral is implemented via conditional independence. Finally, in (vii) the expectation is approximated and the integral extended again.
\end{remark}

\begin{proof}
We present the proof with Assumptions \ref{ass:sigma'} with \eqref{Eqbarsigma}, \ref{ass:X'} with \eqref{eq:epsilon} and $f_{\delta}$ satisfying condition (F2). The other case is analogous and much simpler, mostly because it does not use the function $\varepsilon_{\delta}$ at all. The proof of (ii) is shown for both cases.
The order $O( \varepsilon_{\delta}^{-4})$ for $|f_{\delta}|$ is used frequently as the first step of the proof.

(i). Compute
\begin{align}
&\mathbb E \left| \mathcal L_{\delta} - \mathcal L_{\delta}^{(i)} \right| = \mathbb E \Big| \int_0^{\delta} f_{\delta}(X(t)) \left( X_{\delta, x_0}^{\Delta} (t)-\E X_{\delta, x_0}^{\Delta} (t) \right)^2 \, dt \Big| \label{Ki} \\
&\lesssim \varepsilon_{\delta}^{-4} \int_0^{\delta} \Var\left( X_{\delta, x_0}^{\Delta} (t) \right) \, dt \lesssim \eps_\delta^{-4}\delta\delta^{-2},\notag
\end{align}
using Lemma \ref{lemma:fourth moment}. Therefore, the remainder term is of order $o_{\mathbb P}(\delta^{-2})$ due to $\varepsilon_{\delta}^{-4} \delta\to 0$  by \eqref{eq:epsilon}.

(ii). By the Cauchy-Schwarz inequality, we have
\begin{align}
&\left| \mathcal L_{\delta}^{(i)} - \mathcal L_{\delta}^{(ii)} \right| \notag \\
& \leq  \int_{\delta}^T \left| f_{\delta}(X(t)) - f_{\delta}(X(t - \delta)) \right| \left( X_{\delta, x_0}^{\Delta} (t)-\E X_{\delta, x_0}^{\Delta} (t) \right)^2 \, dt \notag \\
&\leq \Big(\int_{\delta}^T  \left( f_{\delta}(X(t)) - f_{\delta}(X(t - \delta)) \right)^2 dt \Big)^{1/2} \Big( \int_\delta^T  \left(X_{\delta, x_0}^{\Delta} (t)-\E X_{\delta, x_0}^{\Delta} (t) \right)^4 dt\Big)^{1/2}. \label{Kiii}
\end{align}

Lemma \ref{lemma:fourth moment} gives $ \mathbb E \int_\delta^T( X_{\delta, x_0}^{\Delta} (t)-\E X_{\delta, x_0}^{\Delta} (t) )^4 \, dt = O(\delta^{-4})$ so that the second factor is of order $O_{\PP}(\delta^{-2})$. Hence, it suffices to establish  $\int_{\delta}^T  ( f_{\delta}(X(t)) - f_{\delta}(X(t - \delta)) )^2 dt=o_{\PP}(1)$.

When we consider Assumptions \ref{ass:sigma'}, \ref{ass:X'}, condition \eqref{Eqbarsigma} and $f_{\delta}$ satisfying condition (F2), we obtain
\begin{align*}
 &\E\int_\delta^T \left( f_{\delta}(X(t)) - f_{\delta}(X(t - \delta)) \right)^2dt\\ &\lesssim \varepsilon_{\delta}^{-16}  \int_\delta^T \E \| \left( \sigma(X(t)) - \sigma(X(t - \delta)) \right) K_{\delta, x_0} \|^2 dt
\lesssim \varepsilon_{\delta}^{-16} \delta^{2 \beta_t \beta_{\sigma}} T\| K \|_{L^2(\mathbb R)}^2
\end{align*}
by the H\"older continuity of $\sigma$ and $X(\cdot,x)$ and by $\norm{K_{\delta, x_0}}=\norm{K}_{L^2(\R)}$. This upper bound converges to zero by \eqref{eq:epsilon}.

When we consider Assumptions \ref{ass:sigma}, \ref{ass:X}, condition \eqref{Eqbarsigma} and $f_{\delta}$ satisfying condition (F1), we have
\begin{align}
& \left( f_{\delta}(X(t)) - f_{\delta}(X(t - \delta)) \right)^2 \lesssim  \| \left( \sigma(X(t)) - \sigma(X(t - \delta)) \right) K_{\delta, x_0} \|^2 \notag \\
&= \int_{\Lambda} \left( \sigma(X(t, x)) - \sigma(X(t - \delta, x)) \right)^2 K_{\delta, x_0}^2 (x) \, dx \notag \\
&=  \int_{\mathbb R} \left( \sigma(X(t, \delta y + x_0)) - \sigma(X(t - \delta, \delta y + x_0)) \right)^2 K^2(y) \, dy. \label{Kiiib}
\end{align}

The integrand in \eqref{Kiiib} converges to zero by the continuity of $X$ and $\sigma$ (i.e., Assumptions \ref{ass:X} and \ref{ass:sigma}). The integrable majorant $4 \overline{\sigma}^2 K^2(\cdot)$ serves for the  $dy$--integral in \eqref{Kiiib} as well as for the $dt$--integral in \eqref{Kiii}.  The proof that the second factor converges almost surely to zero is accomplished by the dominated convergence theorem.

(iii). By the upper bound on $|f_{\delta}|$, we have
\begin{align}
&\mathbb E \left| \mathcal L_{\delta}^{(ii)} - \mathcal L_{\delta}^{(iii)} \right| \lesssim  \varepsilon_{\delta}^{-4} \mathbb E \int_{\delta}^T \Big| \Big( \int_0^t \left\langle S_{\vartheta} (t-s) \Delta K_{\delta, x_0}, \sigma(X(s)) dW(s) \right\rangle \Big)^2 -  \notag \\
& - \Big( \int_0^t \left\langle S_{\vartheta} (t-s) \Delta K_{\delta, x_0}, \sigma(X(s, x_0)) dW(s) \right\rangle \Big)^2 \Big| \, dt. \label{Kiv}
\end{align}
Denoting
\begin{equation} \label{eq:Fpm}
F_{\pm} (t) := \Big| \int_0^t \left\langle S_{\vartheta}(t-s) \Delta K_{\delta, x_0}, \left( \sigma(X(s)) \pm \sigma(X(s, x_0)) \right) \, dW(s) \right\rangle \Big|,
\end{equation}
using $|A^2 - B^2| = |A+B| |A-B|$ and  the Cauchy-Schwarz inequality, we may follow up \eqref{Kiv} with
\begin{align}
\mathbb E \left| \mathcal L_{\delta}^{(ii)} - \mathcal L_{\delta}^{(iii)} \right| &\lesssim \varepsilon_{\delta}^{-4} \mathbb E \int_{\delta}^T F_+(t) F_-(t) \, dt \notag \\
&\leq \varepsilon_{\delta}^{-4} \Big( \mathbb E \int_{\delta}^T F_+^2(t) \, dt \Big)^{1/2} \Big( \mathbb E \int_{\delta}^T F_-^2(t) \, dt \Big)^{1/2}. \label{Kiv 2}
\end{align}

We start with the first factor after $\eps_\delta^{-4}$. It\^o's isometry yields
\begin{align*}
\mathbb E \int_{\delta}^T F_+^2(t) \, dt &= \mathbb E \int_{\delta}^T \int_0^t \left\| \left( \sigma(X(s)) + \sigma(X(s, x_0)) \right) S_{\vartheta}(t-s) \Delta K_{\delta, x_0} \right\|^2 \, ds \, dt \\
&\lesssim \int_{\delta}^T \int_0^t \left\| S_{\vartheta}(t-s) \Delta K_{\delta, x_0} \right\|^2 \, ds \, dt,
\end{align*}
and this is of order $O(\delta^{-2})$, compare  \eqref{ineq for BDG}. The second factor is given by
\begin{align}
&\mathbb E \int_{\delta}^T F_-^2(t) \, dt \label{eq:Fm} \\
&= \mathbb E \int_{\delta}^T \int_0^t \left\| \left( \sigma(X(s)) - \sigma(X(s, x_0)) \right) S_{\vartheta}(t-s) \Delta K_{\delta, x_0} \right\|^2 \, ds \, dt \notag \\
&= \delta^{-2} \int_{\delta}^T \int_0^{\delta^{-2} t} \mathbb E \left\| \left( \sigma(X(t - \delta^2 v)) - \sigma(X(t - \delta^2 v, x_0)) \right) \left( S_{\vartheta, \delta, x_0}(v) \Delta K \right)_{\delta, x_0} \right\|^2 \, dv \, dt \notag \\
&= \delta^{-2} \int_{\delta}^T \int_0^{\delta^{-2} t} \mathbb E \left\| \left( \sigma(X(t - \delta^2 v, \delta \cdot + x_0)) - \sigma(X(t - \delta^2 v, x_0)) \right) S_{\vartheta, \delta, x_0}(v) \Delta K \right\|_{L^2(\Lambda_{\delta, x_0})}^2 \, dv \, dt. \notag
\end{align}
Using Assumptions \ref{ass:sigma'} and \ref{ass:X'}, we have
\begin{align*}
& \E\left( \sigma(X(t - \delta^2 v, \delta x + x_0)) - \sigma(X(t - \delta^2 v, x_0)) \right)^2  \lesssim \delta^{2 \beta_x \beta_{\sigma}} |x|^{2 \beta_x \beta_{\sigma}}
\end{align*}
and consequently for the integrand in \eqref{eq:Fm}
\begin{align}
&\mathbb E \left\| \left( \sigma(X(t - \delta^2 v, \delta \cdot + x_0)) - \sigma(X(t - \delta^2 v, x_0)) \right) S_{\vartheta, \delta, x_0}(v) \Delta K \right\|_{L^2(\Lambda_{\delta, x_0})}^2 \notag \\
&\lesssim \delta^{2 \beta_x \beta_{\sigma}} \left\| |x|^{\beta_x \beta_{\sigma}} S_{\vartheta, \delta, x_0}(v) \Delta K \right\|_{L^2(\Lambda_{\delta, x_0})}^2. \label{eq:integrand Fm}
\end{align}

Without loss of generality assume $\beta_x \beta_{\sigma} \in(0,1)$ and $\beta_x \beta_{\sigma} \neq 1/3$ (otherwise decrease $\beta_x$ or $\beta_\sigma$ slightly). Using Lemma \ref{lemma:JJinequality}, we  obtain from \eqref{eq:integrand Fm}
\begin{align*}
&\int_0^{\delta^{-2} t} \mathbb E \left\| \left( \sigma(X(t - \delta^2 v, \delta \cdot + x_0)) - \sigma(X(t - \delta^2 v, x_0)) \right) S_{\vartheta, \delta, x_0}(v) \Delta K \right\|_{L^2(\Lambda_{\delta, x_0})}^2 \, dv \\
&\lesssim \delta^{2 \beta_x \beta_{\sigma}} \int_0^{\delta^{-2} T} (1+v)^{3(\beta_x \beta_{\sigma}-1)/2} \, dv
\lesssim \delta^{2\beta_x\beta_\sigma}\left(1+\delta^{-2(3\beta_x\beta_\sigma/2-1/2)}\right).
\end{align*}
Since the bound applies uniformly to all $t\in[0,T]$, we deduce from \eqref{eq:Fm}
$$
\varepsilon_{\delta}^{-4}\mathbb E \int_{\delta}^T F_-^2(t) \, dt \lesssim \varepsilon_{\delta}^{-4}\delta^{-2} (\delta^{2\beta_x\beta_\sigma}
+\delta^{1-\beta_x\beta_\sigma})=o(\delta^{-2})
$$
in view of \eqref{eq:epsilon}, which remained to be proved.

(iv).  We obtain, using the Cauchy-Schwarz inequality,
\begin{align}
&\mathbb E \left| \mathcal L_{\delta}^{(iii)} - \mathcal L_{\delta}^{(iv)} \right| \lesssim \varepsilon_{\delta}^{-4} \mathbb E \int_{\delta}^T \Big| \Big( \int_0^t \sigma(X(s, x_0)) \left\langle S_{\vartheta} (t-s) \Delta K_{\delta, x_0}, dW(s) \right\rangle \Big)^2 - \notag \\
&- \Big( \int_{t - \delta}^t \sigma(X(s, x_0)) \left\langle S_{\vartheta} (t-s) \Delta K_{\delta, x_0}, dW(s) \right\rangle \Big)^2 \Big| \, dt \notag \\
&\leq \varepsilon_{\delta}^{-4} \mathbb E \int_{\delta}^T \Big( \int_0^{t - \delta} \sigma(X(s, x_0)) \left\langle S_{\vartheta} (t-s) \Delta K_{\delta, x_0}, dW(s) \right\rangle \Big)^2 \, dt \, + \notag \\
&+2 \varepsilon_{\delta}^{-4} \Big(\mathbb E \int_{\delta}^T \Big( \int_0^{t - \delta} \sigma(X(s, x_0)) \left\langle S_{\vartheta} (t-s) \Delta K_{\delta, x_0}, dW(s) \right\rangle \Big)^2 \, dt\Big)^{1/2} \times  \notag \\
&\times  \Big(\mathbb E \int_{\delta}^T \Big( \int_{t - \delta}^t \sigma(X(s, x_0)) \left\langle S_{\vartheta} (t-s) \Delta K_{\delta, x_0}, dW(s) \right\rangle\Big)^2 \, dt\Big)^{1/2}\notag\\
& =: (I) + (II). \label{Liv}
\end{align}

In the analysis of $(I)$, we use It\^o's isometry, the scaling properties from Lemma \ref{scaling properties} and Lemma \ref{lemma:JJinequality} with $\alpha=0$. We obtain
\begin{align*}
(I) &= \varepsilon_{\delta}^{-4} \int_{\delta}^T \int_{\delta}^{t} \E \left( \sigma^2(X(t - v, x_0)) \right) \| S_{\vartheta} (v) \Delta K_{\delta,x_0} \|_{L^2(\Lambda)}^2 \, dv \, dt \\
&\leq \varepsilon_{\delta}^{-4} \delta^{-2} \overline{\sigma}^2 T \int_{\delta^{-1}}^\infty  \scapro{S_{\vartheta,\delta,x_0} (2v) \Delta K}{\Delta K}_{L^2(\Lambda_{\delta,x_0})} \, dv \\
&= \varepsilon_{\delta}^{-4} \delta^{-2} \overline{\sigma}^2 T (2\theta)^{-1} \scapro{-S_{\vartheta, \delta, x_0} (2\delta^{-1})  K}{\Delta K}_{L^2(\Lambda_{\delta,x_0})}  \\
&\lesssim \varepsilon_{\delta}^{-4} \delta^{-2} (1+2\delta^{-1})^{-3/4}=o(\delta^{-2})
\end{align*}
 due to  \eqref{eq:epsilon}. Given the result for $(I)$, the term $(II)$ is also of order $o(\delta^{-2})$ because the squared second factor is of order $O(\delta^{-2})$:
\begin{align*}
& \mathbb E \int_{\delta}^T \Big( \int_{t - \delta}^t \sigma(X(s, x_0)) \left\langle S_{\vartheta} (t-s) \Delta K_{\delta, x_0}, dW(s) \right\rangle\Big)^2 \, dt\\
&\leq T \bar\sigma^2\int_0^\infty \norm{S_\theta(v)\Delta K_{\delta,x_0}}^2dv \lesssim \delta^{-2},
\end{align*}
using It\^o isometry and arguing as in \eqref{ineq for BDG}.

(v). Denoting
\[
G_{\pm} (t) :=  \int_{t - \delta}^t \left( \sigma(X(s, x_0)) \pm \sigma(X(t - \delta, x_0)) \right) \left\langle S_{\vartheta}(t-s) \Delta K_{\delta, x_0}, dW(s) \right\rangle ,
\]
we compute
\begin{align}
&\mathbb E
| \mathcal L_{\delta}^{(iv)} - \mathcal L_{\delta}^{(v)} | \notag \\
&\lesssim \varepsilon_{\delta}^{-4} \mathbb E \int_{\delta}^T \Big| \Big( \int_{t - \delta}^t \sigma(X(s, x_0)) \left\langle S_{\vartheta} (t-s) \Delta K_{\delta, x_0}, dW(s) \right\rangle \Big)^2 -  \notag \\
&\quad  - \Big( \int_{t - \delta}^t \sigma(X(t - \delta, x_0)) \left\langle S_{\vartheta} (t-s) \Delta K_{\delta, x_0}, dW(s) \right\rangle \Big)^2 \Big| \, dt \notag\\
&\lesssim \varepsilon_{\delta}^{-4} \mathbb E \int_{\delta}^T \abs{G_+(t) G_-(t)} \, dt
\leq \varepsilon_{\delta}^{-4} \Big( \mathbb E \int_{\delta}^T G_+^2(t) \, dt \Big)^{1/2} \Big( \mathbb E \int_{\delta}^T G_-^2(t) \, dt \Big)^{1/2}. \label{Kvi 2}
\end{align}
It\^o's isometry  yields
\begin{align*}
&\mathbb E  G_{\pm}^2(t) =  \mathbb E  \int_0^{\delta} (\sigma(X(t - v, x_0)) \pm \sigma(X(t - \delta, x_0)))^2 \left\| S_{\vartheta} (v) \Delta K_{\delta,x_0} \right\|^2 \, dv.
\end{align*}
Hence, $\mathbb E \int_{\delta}^T G_+^2(t) \, dt = O(\delta^{-2})$ follows from the boundedness of $\sigma$ (i.e., condition \eqref{Eqbarsigma}) and the argument in \eqref{ineq for BDG}.

For the term with $G_-$ we have
from Assumptions \ref{ass:sigma'} and \ref{ass:X'} that
$\mathbb E( \sigma(X(t -  v, x_0)) - \sigma(X(t - \delta, x_0)) )^2\lesssim \delta ^{2 \beta_t \beta_{\sigma}}$ uniformly over $t$ and $v$. The same argument thus gives here $\mathbb E \int_{\delta}^T G_-^2(t) \, dt = O(\delta^{2 \beta_t \beta_{\sigma}-2})$. Insertion into \eqref{Kvi 2} and noting condition \eqref{eq:epsilon} yields the overall rate $o(\delta^{-2})$.

(vi). Introduce for $t\in[\delta,T]$
\begin{align*}
H(t) &= \Big( \int_{t - \delta}^t \left\langle S_{\vartheta} (t-s) \Delta K_{\delta, x_0}, dW(s) \right\rangle \Big)^2, \quad
\tilde{H}(t) = H(t) - \mathbb E H(t)
\end{align*}
and compute
\begin{align}
&\mathbb E \left( \mathcal L_{\delta}^{(v)} - \mathcal L_{\delta}^{(vi)} \right)^2 = \mathbb E \Big( \int_{\delta}^T f_{\delta}(X(t - \delta)) \sigma^2(X(t - \delta, x_0)) \tilde{H}(t) \, dt \Big)^2 \notag \\
&= 2\mathbb E \int_{\delta}^T \int_{u}^T f_{\delta}(X(t - \delta)) f_{\delta}(X(u - \delta)) \times \notag \\
&\times \sigma^2(X(t - \delta, x_0)) \sigma^2(X(u - \delta, x_0)) \tilde{H}(t) \tilde{H}(u) \, dt \, du. \label{Kvii}
\end{align}
We  analyse the integral on two sets: (a) $t > u + \delta$ and (b) $ u + \delta > t > u$.

(a) $t > u + \delta$. We condition on $\mathcal F_{t - \delta}$ and obtain
\begin{align*}
&\int_{\delta}^{T - \delta} \int_{u + \delta}^T \mathbb E \Big( \mathbb E \Big[ f_{\delta}(X(t - \delta)) f_{\delta}(X(u - \delta)) \times  \\
&\times \sigma^2(X(t - \delta, x_0)) \sigma^2(X(u - \delta, x_0)) \tilde{H}(t) \tilde{H}(u) \Big| \mathcal F_{t - \delta} \Big] \Big) \, dt \, du\\
&=\int_{\delta}^{T - \delta} \int_{u + \delta}^T \E \tilde{H}(t) \E \Big( f_{\delta}(X(t - \delta)) f_{\delta}(X(u - \delta)) \times \\
& \times \sigma^2(X(t - \delta, x_0)) \sigma^2(X(u - \delta, x_0)) \tilde{H}(u) \Big) \, dt \, du\\
&=0,
\end{align*}
using that $\tilde{H}(t)$ is independent of $\mathcal F_{t - \delta}$ with $\E \tilde{H}(t) =0$ and that the other factors are $\mathcal F_{t - \delta}$-measurable.

(b) $ u + \delta > t > u$. In this case, we use the upper bounds for $\sigma$ and $f_{\delta}$ and bound \eqref{Kvii} (up to a positive constant) by
\begin{equation} \label{Kvii 2}
\varepsilon_{\delta}^{-8} \int_{\delta}^T \int_u^{\left(u + \delta \right) \wedge T} \mathbb E | \tilde{H}(t) \tilde{H}(u) | \, dt \, du\le \varepsilon_{\delta}^{-8} T \delta \sup_{t\in[\delta,T]} \E \tilde{H}^2(t),
\end{equation}
where the last step involves the Cauchy-Schwarz inequality. The analogous calculations to \eqref{ineq for BDG} yield
\[ \E \tilde{H}^2(t) \leq \E H^2(t) =O(\delta^{-4})\]
uniformly over $t\in[\delta,T]$.
We conclude $(\mathbb E ( \mathcal L_{\delta}^{(v)} - \mathcal L_{\delta}^{(vi)} )^2)^{1/2}\lesssim \eps_\delta^{-4}\delta^{-3/2}$, implying $\abs{\mathcal L_{\delta}^{(v)} - \mathcal L_{\delta}^{(vi)}}=o_{\PP}(\delta^{-2})$ due to \eqref{eq:epsilon}.

(vii). By translating the integrand in $\mathcal L_{\delta}^{(vii)}$ and by It\^o's isometry we have
\begin{align*}
  \mathcal L_{\delta}^{(vi)} - \mathcal L_{\delta}^{(vii)}&=\int_\delta^T f_\delta(X(t-\delta))\sigma^2(X(t-\delta,x_0))\,dt \times\\
  &\quad\times  \Big( \int_0^\delta \norm{S_\theta(v)\Delta K_{\delta, x_0}}^2\,dv-(2\theta)^{-1}\norm{K'}_{L^2(\R)}^2\delta^{-2}\Big)\\
  &\quad -(2\theta)^{-1}\norm{K'}_{L^2(\R)}^2\delta^{-2}\int_{T-\delta}^T f_\delta(X(s))\sigma^2(X(s,x_0))\,ds
\end{align*}
By the uniforms bounds on $f_\delta$ and $\sigma$ the last term is of order $O(\delta^{-2}\delta\eps_\delta^{-4})$.

Next, we compute by the scaling properties in Lemma \ref{scaling properties} and by partial integration
\begin{align*}
\int_0^\delta \norm{S_\theta(v)\Delta K_{\delta, x_0}}^2dv &= \delta^{-2}\int_0^{\delta^{-1}} \scapro{S_{\theta,\delta,x_0}(2v)\Delta K}{\Delta K}_{L^2(\Lambda_{\delta,x_0})}dv\\
&= \delta^{-2}(2\theta)^{-1} \scapro{(S_{\theta,\delta,x_0}(2\delta^{-1})-I)\Delta K}{K}_{L^2(\Lambda_{\delta,x_0})}\\
&=\delta^{-2}(2\theta)^{-1}\left(\norm{K'}_{L^2(\R)}^2+\scapro{S_{\theta,\delta,x_0}(2\delta^{-1})\Delta K}{K}_{L^2(\Lambda_{\delta,x_0})}\right).
\end{align*}
Now, bounding the scalar product by the Cauchy-Schwarz inequality and Lemma \ref{lemma:JJinequality} with $\alpha=0$, we see that it is of order $O(\delta^{3/4})$. Using the upper bounds for $f_\delta$ and $\sigma$ again, we thus obtain
\[\mathcal L_{\delta}^{(vi)} - \mathcal L_{\delta}^{(vii)}=O_{\PP}(\eps_\delta^{-4}\delta^{-2}\delta^{3/4}+\eps_\delta^{-4}\delta^{-2}\delta)=o_{\PP}(\delta^{-2})\]
by \eqref{eq:epsilon}.
\end{proof}

\subsection{Asymptotics of quadratic variations and related terms}

While Proposition \ref{prop:K} has treated the centered process $X_{\delta, x_0}^{\Delta} (t) - \E X_{\delta, x_0}^{\Delta} (t)$, we shall now consider the terms
\begin{align}
\tilde{\mathcal L}_{\delta} &= \int_0^T f_{\delta}(X(t)) \left( X_{\delta, x_0}^{\Delta} (t) \right)^2 \, dt. \label{eq:tildeL}
\end{align}
Having achieved the representation $\mathcal L_{\delta}^{(vii)}$, we are now ready to determine the limit of $\delta^2 \tilde{\mathcal L}_{\delta}$ as $\delta \rightarrow 0$.

\begin{corollary} \label{convergence of K_delta}
Grant Assumptions \ref{ass:sigma} with \eqref{Eqbarsigma}, \ref{ass:X} and let $f_{\delta}$ satisfy condition (F1) or grant Assumptions \ref{ass:sigma'} with \eqref{Eqbarsigma}, \ref{ass:X'} with \eqref{eq:epsilon} and let $f_{\delta}$ satisfy condition (F2). Suppose as $\delta\to 0$
\[ \int_0^T f_\delta(X(t))\sigma^2(X(t,x_0))\,dt\xrightarrow{\PP} \Psi\]
for some random variable $\Psi$.
 Then $\tilde{\mathcal L}_{\delta}$ from \eqref{eq:tildeL} satisfies for $\delta \rightarrow 0$
$$
\delta^2 \tilde{\mathcal L}_{\delta} \stackrel{\mathbb P}{\rightarrow} \frac{\| K' \|_{L^2(\mathbb R)}^2}{2 \vartheta} \Psi.
$$
\end{corollary}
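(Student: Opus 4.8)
The plan is to deduce this from Proposition~\ref{prop:K}(vii), for which I first have to pass from the uncentered integral $\tilde{\mathcal L}_\delta$ to the centered one $\mathcal L_\delta$ of \eqref{eq:Ldelta}. Recall from \eqref{mild solution} and the vanishing mean of the stochastic convolution (as used already in the proof of Lemma~\ref{lemma:fourth moment}) that $\E X_{\delta,x_0}^\Delta(t)=\scapro{S_\vartheta(t)X_0}{\Delta K_{\delta,x_0}}$ is deterministic. Expanding the square then gives
\[
\tilde{\mathcal L}_\delta-\mathcal L_\delta=\int_0^T f_\delta(X(t))\Big(2\big(X_{\delta,x_0}^\Delta(t)-\E X_{\delta,x_0}^\Delta(t)\big)\E X_{\delta,x_0}^\Delta(t)+\big(\E X_{\delta,x_0}^\Delta(t)\big)^2\Big)\,dt ,
\]
and the first step is to show that this difference is of order $o_\PP(\delta^{-2})$.

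For the purely deterministic term I would use the uniform bound $|f_\delta|\lesssim\eps_\delta^{-4}$ from condition (F2) (or $|f_\delta|\lesssim 1$ under (F1)) and Lemma~\ref{lemma:ini}, which bounds $\int_0^T(\E X_{\delta,x_0}^\Delta(t))^2\,dt=O(\delta^{-11/6})$, yielding a term of order $\eps_\delta^{-4}\delta^{-11/6}=o(\delta^{-2})$ by \eqref{eq:epsilon}. For the cross term I would apply the Cauchy--Schwarz inequality in $t$, together with the same bound on $f_\delta$, Lemma~\ref{lemma:ini}, and the variance estimate $\int_0^T\Var(X_{\delta,x_0}^\Delta(t))\,dt=O(\delta^{-2})$ from Lemma~\ref{lemma:fourth moment} (which makes $\int_0^T(X_{\delta,x_0}^\Delta(t)-\E X_{\delta,x_0}^\Delta(t))^2\,dt=O_\PP(\delta^{-2})$), obtaining a bound of order $\eps_\delta^{-4}\delta^{-1}\delta^{-11/12}=\eps_\delta^{-4}\delta^{-23/12}=o_\PP(\delta^{-2})$, again via \eqref{eq:epsilon}. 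This gives $\tilde{\mathcal L}_\delta=\mathcal L_\delta+o_\PP(\delta^{-2})$.

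It then remains to combine this with Proposition~\ref{prop:K}(vii), which identifies $\mathcal L_\delta$ with $(2\vartheta)^{-1}\norm{K'}_{L^2(\R)}^2\delta^{-2}\int_0^T f_\delta(X(t))\sigma^2(X(t,x_0))\,dt$ up to $o_\PP(\delta^{-2})$; multiplying by $\delta^2$ and using the assumed convergence $\int_0^T f_\delta(X(t))\sigma^2(X(t,x_0))\,dt\xrightarrow{\PP}\Psi$ together with Slutsky's lemma delivers $\delta^2\tilde{\mathcal L}_\delta\xrightarrow{\PP}(2\vartheta)^{-1}\norm{K'}_{L^2(\R)}^2\Psi$, as claimed. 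The only point needing care, and the main (mild) obstacle, is that the factors $\eps_\delta^{-4}$ introduced by condition (F2) must be dominated by the polynomial gains $\delta^{1/6}$ and $\delta^{1/12}$ coming from Lemmas~\ref{lemma:ini} and~\ref{lemma:fourth moment}; this is precisely what the slow-decay condition \eqref{eq:epsilon} guarantees, whereas under (F1) no such balancing is needed and the estimates are immediate.
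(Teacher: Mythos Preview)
Your proposal is correct and follows essentially the same route as the paper: you split $\tilde{\mathcal L}_\delta-\mathcal L_\delta$ into the squared-mean term and the cross term, control the former via Lemma~\ref{lemma:ini} and the bound on $|f_\delta|$, handle the latter by Cauchy--Schwarz, and then invoke Proposition~\ref{prop:K}(vii). The only cosmetic difference is that for the cross term the paper applies Cauchy--Schwarz with $f_\delta(X(t))$ kept as a weight inside both integrals, so that one factor becomes exactly $(\delta^2\mathcal L_\delta)^{1/2}=O_\PP(1)$ and the other is the square root of the already-treated first term; your version bounds $|f_\delta|$ first and uses the variance estimate from Lemma~\ref{lemma:fourth moment} instead, which is equally valid.
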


\begin{proof}
The corresponding convergence for $\delta^2{\mathcal L}_{\delta}$, based on the centered process $X_{\delta, x_0}^{\Delta} (t) - \E X_{\delta, x_0}^{\Delta} (t)$, follows directly from part (vii) of Proposition \ref{prop:K}.

For $\tilde{\mathcal L}_{\delta}$ note that
\begin{align}
\delta^2 (\tilde{\mathcal L}_{\delta}-\mathcal L_{\delta})
&= \delta^2 \int_0^T f_{\delta}(X(t)) \left\langle S_{\vartheta} (t) X_0, \Delta K_{\delta, x_0} \right\rangle^2 \, dt \notag \\
&+ 2 \delta^2 \int_0^T f_{\delta}(X(t)) \left(X_{\delta, x_0}^{\Delta} (t)-\E X_{\delta, x_0}^{\Delta} (t) \right) \left\langle S_{\vartheta} (t) X_0, \Delta K_{\delta, x_0} \right\rangle \, dt. \label{delta 2 K}
\end{align}
We apply the upper bound to $|f_{\delta}|$ from condition (F1) (respectively (F2)) to the first term and obtain its convergence to zero by Lemma \ref{lemma:ini} using $\eps_\delta^{-4}\delta^{-11/6}=o(\delta^{-2})$ due to \eqref{eq:epsilon} in the second case.
The second term converges to zero in probability using the Cauchy-Schwarz inequality and $\delta^2 {\mathcal L}_{\delta}=O_P(1)$ from above. This gives the result for $\delta^2\tilde{\mathcal L}_{\delta}$.
\end{proof}

\begin{proposition} \label{convergence of IJ_delta}
The following holds as $\delta \rightarrow 0$:
\begin{itemize}
\item[(i)] under Assumptions \ref{ass:sigma} with \eqref{Eqbarsigma} and \ref{ass:X}:
$$
\delta^2 \mathcal J_{\delta} \stackrel{\mathbb P}{\rightarrow} \frac{\| K' \|_{L^2(\mathbb R)}^2}{2 \vartheta} \int_0^T \sigma^2 (X(t, x_0)) \, dt,
$$
\item[(ii)] under Assumptions \ref{ass:sigma} with \eqref{Eqbarsigma} and \ref{ass:X}:
$$
\delta^2 \mathcal I_{\delta} \stackrel{\mathbb P}{\rightarrow} \frac{\| K' \|_{L^2(\mathbb R)}^2 \| K \|_{L^2(\mathbb R)}^2}{2 \vartheta} \int_0^T \sigma^4 (X(t, x_0)) \, dt,
$$
\item[(iii)] under Assumptions \ref{ass:sigma} with \eqref{Eqbarsigma} and \ref{ass:X} and assuming $\underline{\sigma} > 0$:
$$
\delta^2 \tilde{\mathcal I}_{\delta} \stackrel{\mathbb P}{\rightarrow} \frac{T \| K' \|_{L^2(\mathbb R)}^2}{2 \vartheta \| K \|_{L^2(\mathbb R)}^2},
$$
\item[(iv)] under Assumptions \ref{ass:sigma'}  with \eqref{Eqbarsigma} and \ref{ass:X'}  with \eqref{eq:epsilon}:
$$
\delta^2 \mathcal J_{\delta}^{\star} \stackrel{\mathbb P}{\rightarrow} \frac{T^{\star} \| K' \|_{L^2(\mathbb R)}^2}{2 \vartheta \| K \|_{L^2(\mathbb R)}^2},
$$
\item[(v)] under Assumptions \ref{ass:sigma'} with \eqref{Eqbarsigma} and \ref{ass:X'} with \eqref{eq:epsilon}:
$$
\delta^2 \mathcal I_{\delta}^{\star} \stackrel{\mathbb P}{\rightarrow} \frac{T^{\star} \| K' \|_{L^2(\mathbb R)}^2}{2 \vartheta \| K \|_{L^2(\mathbb R)}^2}.
$$
\end{itemize}
\end{proposition}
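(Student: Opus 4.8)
The plan is to recognise all five statements as instances of Corollary~\ref{convergence of K_delta}. Recalling the definitions of $\mathcal J_\delta$, $\mathcal I_\delta$, $\tilde{\mathcal I}_\delta$, $\mathcal J_\delta^\star$ and $\mathcal I_\delta^\star$, each of them is of the form $\tilde{\mathcal L}_\delta=\int_0^T f_\delta(X(t))\,(X_{\delta,x_0}^\Delta(t))^2\,dt$ for the functionals $f_\delta(\cdot)$ equal to $1$, $\|\sigma(\cdot)K_{\delta,x_0}\|^2$, $\|\sigma(\cdot)K_{\delta,x_0}\|^{-2}$, $(\|\sigma(\cdot)K_{\delta,x_0}\|^2+\varepsilon_\delta^2)^{-1}$ and $\|\sigma(\cdot)K_{\delta,x_0}\|^2(\|\sigma(\cdot)K_{\delta,x_0}\|^2+\varepsilon_\delta^2)^{-2}$, respectively. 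Lemma~\ref{lemma:f} shows that the first three satisfy condition (F1) (the third one under the extra hypothesis $\underline{\sigma}>0$) and the last two satisfy (F2). Hence in each case it only remains to verify the hypothesis $\int_0^T f_\delta(X(t))\sigma^2(X(t,x_0))\,dt\stackrel{\mathbb P}{\rightarrow}\Psi$ of Corollary~\ref{convergence of K_delta} and to identify $\Psi$; substituting into its conclusion $\delta^2\tilde{\mathcal L}_\delta\stackrel{\mathbb P}{\rightarrow}(2\vartheta)^{-1}\|K'\|_{L^2(\mathbb R)}^2\Psi$ then produces the asserted formulas.

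\textbf{Cases (i)--(iii).} For (i), $\int_0^T\sigma^2(X(t,x_0))\,dt$ does not depend on $\delta$, so $\Psi=\int_0^T\sigma^2(X(t,x_0))\,dt$. For (ii), $f_\delta(X(t))\sigma^2(X(t,x_0))=\|\sigma(X(t))K_{\delta,x_0}\|^2\,\sigma^2(X(t,x_0))\to\sigma^4(X(t,x_0))\|K\|_{L^2(\mathbb R)}^2$ for every $t$ by Lemma~\ref{lemma: limit sigma K}, and since the left-hand side is dominated by the constant $\overline{\sigma}^4\|K\|_{L^2(\mathbb R)}^2$, dominated convergence gives $\Psi=\|K\|_{L^2(\mathbb R)}^2\int_0^T\sigma^4(X(t,x_0))\,dt$. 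For (iii), $\underline{\sigma}>0$ forces $\|\sigma(X(t))K_{\delta,x_0}\|^2\ge\underline{\sigma}^2\|K\|_{L^2(\mathbb R)}^2>0$, so $f_\delta(X(t))\sigma^2(X(t,x_0))=\sigma^2(X(t,x_0))/\|\sigma(X(t))K_{\delta,x_0}\|^2$ is bounded by $\overline{\sigma}^2\underline{\sigma}^{-2}\|K\|_{L^2(\mathbb R)}^{-2}$ and converges pointwise to $\|K\|_{L^2(\mathbb R)}^{-2}$ by Lemma~\ref{lemma: limit sigma K}, whence $\Psi=T\|K\|_{L^2(\mathbb R)}^{-2}$. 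In all three cases, inserting $\Psi$ into Corollary~\ref{convergence of K_delta} gives exactly the formulas in (i)--(iii).

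\textbf{Cases (iv)--(v).} These are the substantial ones, because $f_\delta$ now involves the stabiliser $\varepsilon_\delta^2$ and $\sigma(X(t,x_0))$ may vanish, so no fixed integrable majorant for $f_\delta(X(t))\sigma^2(X(t,x_0))$ is available; the only crude bound $\overline{\sigma}^2\varepsilon_\delta^{-2}$ blows up. Split $[0,T]=N^\star\cup P$ with $N^\star=\{t:\sigma(X(t,x_0))=0\}$ and $P=\{t:\sigma(X(t,x_0))>0\}$, so $|P|=T^\star$. Since $\sigma^2(X(t,x_0))=0$ on $N^\star$, the integrand vanishes there and it suffices to study $\int_P f_\delta(X(t))\sigma^2(X(t,x_0))\,dt$. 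On $P$ I would replace $\sigma^2(X(t,x_0))$ by $\|K\|_{L^2(\mathbb R)}^{-2}\|\sigma(X(t))K_{\delta,x_0}\|^2$: arguing exactly as in the proof of Proposition~\ref{prop:K}, the H\"older continuity of $\sigma$ (Assumption~\ref{ass:sigma'}) together with the quadratic-mean H\"older continuity of $X$ (Assumption~\ref{ass:X'}) gives $\mathbb E\int_0^T\bigl|\sigma^2(X(t,x_0))\|K\|_{L^2(\mathbb R)}^2-\|\sigma(X(t))K_{\delta,x_0}\|^2\bigr|\,dt\lesssim\delta^{\beta_x\beta_\sigma}$, and multiplying by the bound $|f_\delta|\le\varepsilon_\delta^{-4}$ from (F2) yields a term of order $\varepsilon_\delta^{-4}\delta^{\beta_x\beta_\sigma}=o(1)$ by \eqref{eq:epsilon}, so the replacement costs only $o_{\mathbb P}(1)$. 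After the replacement the integrand $\|K\|_{L^2(\mathbb R)}^{-2}f_\delta(X(t))\|\sigma(X(t))K_{\delta,x_0}\|^2$ equals $\|K\|_{L^2(\mathbb R)}^{-2}A/(A+\varepsilon_\delta^2)$ with $A=\|\sigma(X(t))K_{\delta,x_0}\|^2$ in case (iv) and $\|K\|_{L^2(\mathbb R)}^{-2}A^2/(A+\varepsilon_\delta^2)^2$ in case (v); both lie in $[0,\|K\|_{L^2(\mathbb R)}^{-2}]$ and, by Lemma~\ref{lemma: limit sigma K} and $\varepsilon_\delta\to0$, converge for every $t\in P$ to $\|K\|_{L^2(\mathbb R)}^{-2}$ (the limit $A\to\sigma^2(X(t,x_0))\|K\|_{L^2(\mathbb R)}^2$ being strictly positive on $P$). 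Dominated convergence on the finite-measure set $P$ then gives $\int_P\to T^\star\|K\|_{L^2(\mathbb R)}^{-2}$, hence $\Psi=T^\star\|K\|_{L^2(\mathbb R)}^{-2}$, and Corollary~\ref{convergence of K_delta} delivers (iv) and (v).

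\textbf{Main obstacle.} The only non-routine point is the passage to the limit in (iv) and (v): because of the vanishing stabiliser one cannot invoke dominated convergence directly, and it is precisely the interplay between the quadratic-mean H\"older estimates of Assumptions~\ref{ass:sigma'}--\ref{ass:X'} (already available from Proposition~\ref{prop:K}) and the slow decay \eqref{eq:epsilon} of $\varepsilon_\delta$ that lets one reduce, after restricting to $P$, the stabilised denominators to the genuine noise level $\sigma^2(X(t,x_0))\|K\|_{L^2(\mathbb R)}^2$; everything else is a bookkeeping application of Corollary~\ref{convergence of K_delta} and Lemmas~\ref{lemma:f} and \ref{lemma: limit sigma K}.
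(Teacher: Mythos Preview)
Your proposal is correct and follows essentially the same approach as the paper: reduce each case to Corollary~\ref{convergence of K_delta} via Lemma~\ref{lemma:f}, and for the stabilised cases (iv)--(v) control the replacement of $\sigma^2(X(t,x_0))\|K\|_{L^2(\mathbb R)}^2$ by $\|\sigma(X(t))K_{\delta,x_0}\|^2$ through the H\"older estimate $\mathbb E\bigl|\|\sigma(X(t))K_{\delta,x_0}\|^2-\sigma^2(X(t,x_0))\|K\|_{L^2(\mathbb R)}^2\bigr|\lesssim\delta^{\beta_x\beta_\sigma}$ combined with~\eqref{eq:epsilon}. The paper derives this estimate within the proof of the present proposition (equation~\eqref{eq:sigmanorm}) rather than in Proposition~\ref{prop:K}, but the computation is the same. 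The only organisational difference is that the paper works throughout in $L^1(\mathbb P)$---establishing the uniform majorant $\mathbb E[f_\delta(X(t))\sigma^2(X(t,x_0))]\le 2\|K\|_{L^2(\mathbb R)}^{-2}$ and pointwise $L^1(\mathbb P)$-convergence---whereas you first make the replacement in $L^1(\mathbb P)$ and then apply pathwise dominated convergence on the (random) set $P$; both routes are valid and rely on the same estimates.
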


\begin{proof}
We apply Proposition \ref{convergence of K_delta} to the functionals proposed in Lemma \ref{lemma:f}.

(i). We use $f_{\delta}(X(t)) = 1$ so that $\Psi=\int_0^T\sigma^2(X(t,x_0))\,dt$ and the result follows.

(ii). We use $f_{\delta}(X(t)) = \| \sigma (X(t)) K_{\delta, x_0} \|^2$.
Since $f_{\delta}(X(t))  \rightarrow \sigma^2(X(t, x_0)) \| K \|_{L^2(\mathbb R)}^2$ by Lemma \ref{lemma: limit sigma K}, the limit $\Psi = \int_0^T\sigma^4(X(t, x_0)) \| K \|_{L^2(\mathbb R)}^2 \, dt$ follows by  dominated convergence  because $\overline{\sigma}^4 \| K \|_{L^2(\mathbb R)}^2$ is an integrable majorant.

(iii). We use $f_{\delta}(X(t)) = \| \sigma (X(t)) K_{\delta, x_0} \|^{-2}$.
Since $f_{\delta}(X(t))\sigma^2(X(t,x_0)) \rightarrow \| K \|_{L^2(\mathbb R)}^{-2}$, we obtain the  limit $\Psi = T\| K \|_{L^2(\mathbb R)}^{-2}$. The integrable majorant for the $dt$--integral over $f_{\delta}(X(t)) \sigma^2(X(t, x_0))$ can be taken as $\frac{\overline{\sigma}^2}{\underline{\sigma}^2 \| K \|_{L^2(\mathbb R)}^2}$  (up to a multiplicative constant).

(iv). We use $f_{\delta}(X(t)) = \frac{1}{\| \sigma (X(t)) K_{\delta, x_0} \|^2 + \varepsilon_{\delta}^2}$.
Since we have only an upper bound to $\sigma(\cdot)$ in this case, obtaining an integrable majorant to
$
\frac{\sigma^2(X(t, x_0))}{\| \sigma (X(t)) K_{\delta, x_0} \|^2 + \varepsilon_{\delta}^2}
$
is not so straightforward. We use $|A^2 - B^2| = |A + B| |A - B|$, the upper bound $\overline{\sigma}$ and Assumptions \ref{ass:sigma'} and \ref{ass:X'}  to obtain
\begin{align}
&\E\abs{\| \sigma (X(t)) K_{\delta, x_0} \|^2 -\sigma^2(X(t, x_0))\norm{K}_{L^2(\R)}^2}\notag\\
 &\le \int_\Lambda \E\abs{\sigma^2(X(t,x))-\sigma^2(X(t,x_0))}K_{\delta,x_0}^2(x) \, dx\notag\\
&\lesssim \int_\Lambda \abs{x-x_0}^{\beta_x\beta_\sigma}K_{\delta,x_0}^2(x) \, dx\notag\\
&= \int_{\R} \abs{\delta y}^{\beta_x\beta_\sigma}K^2(y) \, dy \lesssim \delta^{\beta_x\beta_\sigma}.\label{eq:sigmanorm}
\end{align}
This gives the uniform majorant in $\delta$ and $t$
\begin{align*}
\E\frac{\sigma^2(X(t, x_0))}{\| \sigma (X(t)) K_{\delta, x_0} \|^2 + \varepsilon_{\delta}^2}&\lesssim \E\frac{\| \sigma (X(t)) K_{\delta, x_0} \|^2} {\norm{K}_{L^2(\R)}^2(\| \sigma (X(t)) K_{\delta, x_0} \|^2 + \varepsilon_{\delta}^2)}+\frac{\delta^{\beta_x\beta_\sigma}}{\norm{K}_{L^2(\R)}^2\eps_\delta^2}\\
&\le \frac2{\norm{K}_{L^2(\R)}^2}
\end{align*}
for sufficiently small $\delta$ due to \eqref{eq:epsilon}. Next, we  determine the pointwise limit of the $L^1(\PP)$-distance. In view of \eqref{eq:sigmanorm} and $\delta^{\beta_x\beta_\sigma}\eps_\delta^{-2}\to 0$ due to \eqref{eq:epsilon} it suffices to note for all $t$
\[
\E\babs{\frac{\sigma^2(X(t, x_0))}{\sigma^2(X(t, x_0))\norm{K}_{L^2(\R)}^2 + \varepsilon_{\delta}^2}-\frac1{\norm{K}_{L^2(\R)}^2}{\bf 1}(\sigma^2(X(t,x_0))\not=0)}\underset{\delta \rightarrow 0}{\longrightarrow} 0.
\]
We conclude that the integral converges in $L^1(\PP)$ and thus in probability to
\[\Psi=\int_0^T \frac1{\norm{K}_{L^2(\R)}^2}{\bf 1}(\sigma^2(X(t,x_0))\not=0)\,dt=\norm{K}_{L^2(\R)}^{-2} T^\star.\]

(v). We use $f_{\delta}(X(t)) = \frac{\| \sigma (X(t)) K_{\delta, x_0} \|^2}{(\| \sigma (X(t)) K_{\delta, x_0} \|^2 + \varepsilon_{\delta}^2)^2}$.
An integrable majorant in $t$
is given by  $2 \| K \|_{L^2(\mathbb R)}^{-2}$ and the limit $\Psi=\norm{K}_{L^2(\R)}^{-2} T^\star$ is determined as in the previous case.
\end{proof}

\subsection{Proof of the main theorems} \label{app:stableconv}

\begin{proof}[Proof of Theorem \ref{thm:hatvartheta}]
Assume first in addition the boundedness \eqref{Eqbarsigma} of $\sigma(\cdot)$. Consider the error decomposition \eqref{eq:errorforhat}. The asymptotic properties of $\delta^2 \mathcal I_{\delta}$ and $\delta^2 \mathcal J_{\delta}$ are established in Proposition \ref{convergence of IJ_delta}(i),(ii), therefore the second factor converges in probability to the random variable
$$
\frac{(2 \vartheta)^{1/2} \| K \|_{L^2(\mathbb R)}}{\| K' \|_{L^2(\mathbb R)}} \cdot \frac{(\int_0^T \sigma^4 (X(t, x_0)) \, dt )^{1/2}}{\int_0^T \sigma^2 (X(t, x_0)) \, dt}.
$$

For the first factor, let $Y_{\delta}(t, x) := \delta X_{\delta, x_0}^{\Delta}(t) \sigma(X(t, x)) K_{\delta, x_0}(x)$ and check the two conditions of Proposition \ref{prop:StableCLT}.
Since
$$
\int_0^T \| Y_{\delta}(t) \|^2 \, dt = \delta^2 \mathcal I_{\delta} \stackrel{\mathbb P}{\rightarrow} \frac{\| K' \|_{L^2(\mathbb R)}^2 \| K \|_{L^2(\mathbb R)}^2}{2 \vartheta} \int_0^T \sigma^4(X(t, x_0)) \, dt
$$
by Proposition \ref{convergence of IJ_delta}(ii), condition (C1) is satisfied with $s(t) = \frac{\| K' \|_{L^2(\mathbb R)} \| K \|_{L^2(\mathbb R)}}{(2 \vartheta)^{1/2}} \sigma^2(X(t, x_0))$.
The support condition (C2') follows directly from the definition of $Y_\delta(t)$ as a multiple of $K_{\delta,x_0}$.
Proposition \ref{prop:StableCLT} thus shows $\delta \mathcal M_{\delta} \xrightarrow{stably} \int_0^T s(t) \, dB(t)$ as $\delta \rightarrow 0$ with an independent scalar Brownian motion $B$. On the event $\{ \int_0^T\sigma^2(X(t,x_0))\,dt>0\}$ we infer $\frac{\delta \mathcal M_{\delta}}{\delta \mathcal I_{\delta}^{1/2}} \xrightarrow{stably} Z$ where $Z \sim N(0,1)$ is independent of the $\sigma$--algebra $\scr F_T$. We conclude by applying Slutsky's lemma.

If $\sigma(\cdot)$ is continuous, but unbounded, then consider the events $\Omega_R=\{\max_{t\in[0,T],x\in\bar\Lambda}\abs{X(t,x)}\le R\}$ and set $\sigma_R(y):=\sigma(y)\wedge R$ for $R>0$. By Assumption \ref{ass:X} $X$ is continuous in time and space such that $\PP(\Omega_R)\uparrow 1$ follows for $R\to\infty$. Moreover, by uniqueness $X$ equals on $\Omega_R$ the solution $X_R$ $\PP$--a.s. of the SPDE \eqref{equation SPDE} in terms of $\sigma_R$ and also the estimators $\hat\theta_\delta$, based on observations from $X$ and $X_R$, respectively, agree on $\Omega_R$. The stable limit theorem for $X_R$ with bounded $\sigma_R(\cdot)$ thus yields
\[
\delta^{-1} ( \hat{\vartheta}_{\delta} - \vartheta ){\bf 1}_{\Omega_R} \xrightarrow{stably} \frac{(2 \vartheta)^{1/2} \| K \|_{L^2(\mathbb R)}}{\| K' \|_{L^2(\mathbb R)}} \cdot \frac{(\int_0^T \sigma^4 (X(t, x_0)) \, dt )^{1/2}}{\int_0^T \sigma^2 (X(t, x_0)) \, dt}{\bf 1}_{\Omega_R} \cdot Z.
\]
Since this holds for all $R\in\N$, it is also true for the limit $\lim_{R\to\infty}{\bf 1}_{\Omega_R}={\bf 1}$ $\PP$--a.s.
This gives the proof in the case of unbounded continuous $\sigma$.
\end{proof}

\begin{proof}[Proof of Theorem \ref{thm:tildevartheta}]
Assume first in addition the boundedness \eqref{Eqbarsigma} of $\sigma(\cdot)$. The decomposition of the error follows from \eqref{eq:errorfortilde}:
$$
\delta^{-1} ( \tilde{\vartheta}_{\delta} - \vartheta ) = \frac{\tilde{\mathcal M}_{\delta}}{\tilde{\mathcal I}_{\delta}^{1/2}} \cdot \frac{1}{( \delta^2 \tilde{\mathcal I}_{\delta} )^{1/2}}.
$$

A standard continuous martingale central limit theorem (e.g., \cite{Ku}, Theorem 1.19) provides the convergence of the first factor to $N(0,1)$ in distribution.

Proposition \ref{convergence of IJ_delta}(iii) yields the convergence
$$
\delta^2 \tilde{\mathcal I}_{\delta} \stackrel{\mathbb P}{\rightarrow} \frac{T \| K' \|_{L^2(\mathbb R)}^2}{2 \vartheta \| K \|_{L^2(\mathbb R)}^2}
$$
and the result  follows by applying Slutsky's lemma.

If $\sigma(\cdot)$ is continuous, but unbounded, we use $\Omega_R$, $\sigma_R$ and $X_R$ as in the proof of Theorem \ref{thm:hatvartheta} and write $Z_\delta=\delta^{-1}(\tilde\theta_\delta-\theta)$ and $Z_{\delta,R}=\delta^{-1}(\tilde\theta_{\delta,R}-\theta)$ with the MNE $\tilde \theta_{\delta,R}$  based on observations from $X_R$. We check weak convergence to $Z\sim N(0,2\theta\norm{K}_{L^2(\R)}^2/(T\norm{K'}_{L^2(\R)}^2))$ for test functions $f\in C_b(\R)$ via
\begin{align*}
&\limsup_{\delta\to 0} \abs{\E[f(Z_\delta)]-\E[f(Z)]}\\
 &\le \limsup_{\delta\to 0} \Big(\abs{\E[f(Z_{\delta,R})]-\E[f(Z)]}+\E[\abs{f(Z_{\delta,R})-f(Z_\delta)}{\bf 1}_{\Omega_R^C}]\Big)\\
&\le 0+2\norm{f}_\infty\PP(\Omega_R^C)\text{ for any } R>0.
\end{align*}
Letting $R\to\infty$, the limit is zero and the general result  follows.
\end{proof}

\begin{proof}[Proof of Theorem \ref{thm:starvartheta}]
Exactly as for Theorem \ref{thm:hatvartheta} we argue first for bounded $\sigma(\cdot)$ and then extend to unbounded continuous $\sigma(\cdot)$.
Consider the error decomposition \eqref{eq:errorforstar}. The asymptotic properties of $\delta^2 \mathcal I_{\delta}^{\star}$ and $\delta^2 \mathcal J_{\delta}^{\star}$ are established in Proposition \ref{convergence of IJ_delta}(iv),(v). Therefore the second factor converges in probability to the random variable
$$
\frac{(2 \vartheta)^{1/2} \| K \|_{L^2(\mathbb R)}}{(T^{\star})^{1/2} \| K' \|_{L^2(\mathbb R)}} \text{ on $\{T^\star>0\}$}.
$$

For the first factor, let $Y_{\delta}(t,x) := \frac{\delta X_{\delta, x_0}^{\Delta}(t) \sigma(X(t,x)) K_{\delta, x_0}(x)}{\| \sigma (X(t)) K_{\delta, x_0} \|^2 + \varepsilon_{\delta}^2}$ and check the two conditions of Proposition \ref{prop:StableCLT}.
Since
$$
\int_0^T \| Y_{\delta}(t) \|^2 \, dt = \delta^2 \mathcal I_{\delta}^{\star} \stackrel{\mathbb P}{\rightarrow} \frac{\| K' \|_{L^2(\mathbb R)}^2}{2 \vartheta \| K \|_{L^2(\mathbb R)}^2} \int_0^T \mathbf{1}( \sigma(X(t, x_0)) \neq 0) \, dt
$$
by Proposition \ref{convergence of IJ_delta}(v), condition (C1) is satisfied with $s(t) = \frac{\| K' \|_{L^2(\mathbb R)}}{(2 \vartheta)^{1/2} \| K \|_{L^2(\mathbb R)}} \mathbf{1}(\sigma(X(t, x_0)) \neq 0)$.
The support condition (C2') follows again directly by definition of $Y_\delta(t)$.

Proposition \ref{prop:StableCLT} thus yields $\delta \mathcal M_{\delta}^{\star} \xrightarrow{stably} \int_0^T s(t) \, dB(t)$  with an independent scalar Brownian motion $B$. Hence, $\frac{\delta \mathcal M_{\delta}^{\star}}{\delta (\mathcal I_{\delta}^{\star})^{1/2}} \xrightarrow{stably} Z$ holds on $\{T^\star>0\}$ with $Z \sim N(0,1)$ independent of the $\sigma$--algebra $\scr F_T$. The proof is concluded by applying Slutsky's lemma and approximating possibly unbounded $\sigma$ by the truncated versions $\sigma_R$.
\end{proof}

\section{A stable limit theorem for cylindrical Brownian martingales} \label{sec:stableCLT}

Let $H$ be a separable Hilbert space and $(e_k)_{k \geq 1}$ a complete orthonormal system in $H$. Let $(W_k(t), t \geq 0)_{k \geq 1}$ be a sequence of independent real-valued standard Brownian motions. Then $W(t) = \sum_{k \geq 1} W_k(t) e_k$ is an $H$-valued cylindrical Brownian motion (e.g., Proposition 4.11 in \cite{DPZ}). Consider the filtered pro\-ba\-bi\-li\-ty space $(\Omega, {\scr F}, ({\scr F}_t)_{t \geq 0}, \mathbb P)$, on which $(W_k(t), t \geq 0)_{k \geq 1}$ are defined and where the \textit{Brownian filtration} $({\scr F}_t)_{t \geq 0}$ is the filtration generated by $(W_k(t), t \geq 0)_{k \geq 1}$ and augmented by $\mathbb P$--null sets.

We start with a Hilbert space-valued Brownian martingale representation theorem, which follows by approximation from the finite-dimensional version, but does not seem readily available in the literature.

\begin{proposition} \label{prop:CBL}
Let $(M(t), t \geq 0)$ be a square-integrable real-valued martingale with respect to $({\scr F}_t)_{t \geq 0}$ and with c\`adl\`ag paths, $M(0) = 0$. Then there exist progressively measurable processes $(F_k(t), t \geq 0)_{k \geq 1}$ satisfying $\sum_{k=1}^{\infty} \int_0^T \mathbb E F_k^2 (t) \, dt < \infty$ for all $T>0$ and $\mathbb P$--a.s. (with $L^2(\PP)$-convergence)
$$
M(t) = \sum_{k=1}^\infty \int_0^t F_k(s) \, dW_k(s) = \int_0^t \left\langle F(s), dW(s) \right\rangle, \quad t \geq 0,
$$
where $F(s):=\sum_{k=1}^{\infty} F_k(s) e_k$.
\end{proposition}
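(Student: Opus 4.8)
The plan is to reduce the claim to the finite-dimensional Brownian martingale representation theorem by a projection-and-limit argument. First I would fix a horizon $T>0$ and, for each $n\ge 1$, introduce the sub-filtration $({\scr F}_t^n)_{0\le t\le T}$ generated by the first $n$ Brownian motions $(W_1,\dots,W_n)$ (augmented by null sets), together with the conditional expectations $M^n(t):=\E[M(t)\mid {\scr F}_t^n]$. Each $M^n$ is a square-integrable càdlàg $({\scr F}_t^n)$-martingale, and since $({\scr F}_t^n)$ is the (augmented) filtration of the $\R^n$-valued Brownian motion $(W_1,\dots,W_n)$, the classical representation theorem (e.g.\ via the martingale representation for Brownian filtrations, Revuz--Yor or Karatzas--Shreve) yields progressively measurable processes $F_1^n,\dots,F_n^n$ with $\sum_{k=1}^n\int_0^T\E (F_k^n)^2\,dt<\infty$ and $M^n(t)=\sum_{k=1}^n\int_0^t F_k^n(s)\,dW_k(s)$, $\PP$-a.s. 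Here one uses that the Brownian filtration has the martingale representation property, so in particular $M^n$ has no jump part and is automatically continuous.

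The next step is to show the processes are consistent and converge. By the tower property $\E[M^{n+1}(t)\mid{\scr F}_t^n]=M^n(t)$, and projecting the stochastic-integral representation of $M^{n+1}$ onto ${\scr F}_t^n$ (the integrals against $W_{n+1},\dots$ vanish under the conditional expectation, and $\E[F_k^{n+1}(s)\mid{\scr F}_s^n]$ is the natural candidate for $F_k^n$) gives $F_k^n(s)=\E[F_k^{n+1}(s)\mid{\scr F}_s^n]$ for $k\le n$, so by Itô isometry the sequence $(F_k^n)_n$ is Cauchy in $L^2([0,T]\times\Omega)$; denote its limit $F_k$. Since $({\scr F}_t)$ is generated by all the $W_k$, the martingale convergence theorem gives $M^n(t)\to M(t)$ in $L^2(\PP)$ for each $t$ (and, along a subsequence, a.s.); Doob's maximal inequality upgrades this to $\sup_{t\le T}|M^n(t)-M(t)|\to 0$ in $L^2$. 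Passing to the limit in $M^n(t)=\sum_{k=1}^n\int_0^t F_k^n\,dW_k$, using Itô isometry to control $\sum_{k=1}^n\int_0^t(F_k^n-F_k)\,dW_k$ and $\sum_{k>n}\int_0^t F_k\,dW_k$, yields $M(t)=\sum_{k\ge1}\int_0^t F_k(s)\,dW_k(s)$ in $L^2(\PP)$, with $\sum_k\int_0^T\E F_k^2\,dt=\lim_n\|M^n(T)\|_{L^2}^2\le\|M(T)\|_{L^2}^2<\infty$ by Itô isometry and Fatou. Finally I would note that the identification $\int_0^t\langle F(s),dW(s)\rangle$ with $F(s)=\sum_k F_k(s)e_k$ is just the definition of the stochastic integral against a cylindrical Brownian motion, and extend from a fixed $T$ to all $t\ge0$ by a diagonal/consistency argument over $T\in\N$ (the representing processes on $[0,T]$ and $[0,T']$ agree on $[0,T\wedge T']$ by uniqueness of the integrand, again via Itô isometry).

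The main obstacle I anticipate is handling the \emph{càdlàg but a priori non-continuous} hypothesis on $M$: one must argue that the representation forces continuity. This is clean once one invokes that $({\scr F}_t^n)$ is a Brownian filtration and hence every square-integrable martingale with respect to it is continuous and representable; the only subtlety is making sure $M^n=\E[M(\cdot)\mid{\scr F}_\cdot^n]$ is genuinely a martingale with a càdlàg modification with respect to the smaller filtration (which follows from the usual conditions on $({\scr F}_t^n)$, i.e.\ right-continuity and completeness of the augmented filtration), and that the a.s.\ limit $M=\lim_n M^n$ inherits continuity. A second, more technical point is the careful bookkeeping of the $L^2$-convergence of the infinite series of stochastic integrals, but this is entirely routine given Itô isometry; no genuinely hard estimate is needed.
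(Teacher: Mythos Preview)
Your proposal follows essentially the same route as the paper: project $M$ onto the finite-dimensional Brownian subfiltrations ${\scr F}^n_t$, apply the classical finite-dimensional Brownian representation theorem, and pass to the limit via $L^2$-martingale convergence as ${\scr F}_t=\sigma(\bigcup_n{\scr F}^n_t)$. The only notable difference is in the consistency step. You argue that $(F_k^n)_{n\ge k}$ is an $L^2$-martingale in $n$ via the projection identity $F_k^n=\E[F_k^{n+1}\mid{\scr F}_s^n]$, whereas the paper argues more directly that the $F_k$ can be chosen not to depend on the level $K$ at all, using uniqueness of the finite-dimensional representation together with $\E[\int_0^t F_K\,dW_K\mid{\scr F}_t^{(K-1)}]=0$ (from independence of $W_K$ and ${\scr F}^{(K-1)}$). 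The paper's version sidesteps the need to justify that conditional expectation commutes with the stochastic integral in the way your formula asserts; your version works too, but that projection identity deserves a line of justification. The remaining differences (you invoke Doob's maximal inequality for uniform-in-$t$ convergence, the paper uses c\`adl\`ag versions and pointwise $L^2$-limits) are cosmetic.
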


\begin{proof}
Define the subfiltrations $({\scr F}_t^{(K)})_{t \geq 0}$ generated by $(W_k(t), t \geq 0)_{1 \leq k \leq K}$ and consider
$$
M^{(K)}(t) := \mathbb E [M(t) \, | \, {\scr F}_t^{(K)}].
$$

By the tower property for $s<t$, we have
$$
\mathbb E [M(t) \, | \, {\scr F}_s^{(K)}] = \mathbb E [ \mathbb E [M(t) \, | \, {\scr F}_s] \, | \, {\scr F}_s^{(K)}] = M^{(K)}(s)
$$
and another application of the tower property yields
$$
\mathbb E [M^{(K)}(t) \, | \, {\scr F}_s^{(K)}] = \mathbb E [M(t) \, | \, {\scr F}_s^{(K)}] = M^{(K)}(s).
$$
We conclude that $(M^{(K)}(t), t \geq 0)$ forms an $L^2(\PP)$-martingale with re\-spect to the $K$-dimensional Brownian filtration $({\scr F}_t^{(K)})_{t \geq 0}$. By standard martingale theory (e.g., Theorem 1.3.13 in \cite{KS}) we may choose a c\`adl\`ag version of $(M^{(K)}(t), t \geq 0)$, which we shall do henceforth.

Theorem 3.4.15 in \cite{KS} therefore shows that there are $(F_k(t), t \geq 0)_{1 \leq k \leq K}$ satisfying $\sum_{k=1}^K \int_0^T \mathbb E F_k^2(t) \, dt < \infty$ for all $T > 0$ and $\mathbb P$--a.s.
$$
M^{(K)}(t)=\sum_{k=1}^K \int_0^t F_k(s) \, dW_k(s), \quad t \geq 0.
$$

The uniqueness result of that theorem also shows that for each $K$ the $F_k$, $k=1,\ldots,K$, can be chosen to not depend on $K$ because by independence of $W_K$ from $(W_k, 1 \leq k \leq K-1)$, $K \geq 2$, we have
$$
\mathbb E \Big[ \int_0^t F_K(s)\, dW_K(s) \, \Big| \, {\scr F}_t^{(K-1)} \Big] = 0.
$$

Since ${\scr F}_t$ is generated by $\bigcup_{K \geq 1}{\scr F}_t^{(K)}$, the $L^2$-martingale convergence theorem gives $\lim_{K \rightarrow \infty} M^{(K)}(t) = M(t)$ in $L^2(\PP)$-convergence for every $t \geq 0$. Hence, also $\sum_{k=1}^K \int_0^t F_k(s) \, dW_k(s)$ converges in $L^2(\PP)$ for $K\rightarrow \infty$. By It\^o's isometry this shows that the $L^2(\PP)$-norms converge: $\sum_{k=1}^\infty \int_0^t \mathbb E F_k^2(s) \,ds < \infty$. The limit $\sum_{k=1}^\infty \int_0^t F_k(s) \, dW_k(s)= \int_0^t \left\langle F(s), dW(s) \right\rangle$ is then well defined as an element of $L^2(\PP)$. Moreover, it equals the limit of $M^{(K)}(t)$ whence
$$
M(t) = \sum_{k=1}^\infty \int_0^t F_k(s) \, dW_k(s) = \int_0^t \left\langle F(s), dW(s) \right\rangle
$$
holds $\mathbb P$--a.s. for each fixed $t \geq 0$. Using the c\`adl\`ag path versions on each side, this entails equality for all $t \geq 0$ with probability one.
\end{proof}

\begin{theorem} \label{thm:stclt}
Let $(Y_\delta(t), t \geq 0)$ for $\delta>0$ be progressively measurable $H$-valued processes on $(\Omega,{\scr F},({\scr F}_t)_{t \geq 0}, \mathbb P)$ with $\int_0^T \| Y_\delta(t) \|^2 \, dt < \infty$ and all $T \geq 0$ (or $T \in [0, T_{max}]$). Assume for all $T$:
\begin{itemize}
\item[(C1)] $\int_0^T \| Y_\delta(t) \|^2 \, dt \stackrel{\mathbb P}{\rightarrow} \int_0^T s^2(t) \, dt$ as $\delta \rightarrow 0$ for some progressively measurable real-valued process $(s(t), t \geq 0)$ with $\int_0^Ts^2(t)\,dt<\infty$,
\item[(C2)] $\int_0^T \left\langle Y_\delta(t), F(t) \right\rangle \, dt \stackrel{\mathbb P}{\rightarrow} 0$ as $\delta \rightarrow 0$ for all  progressively measurable $H$-valued processes $(F(t), t \geq 0)$.
\end{itemize}
Then the following stable limit theorem for stochastic integrals holds:
$$
\Big( \int_0^T \left\langle Y_\delta(t), dW(t) \right\rangle, \, T \geq 0 \Big) \xrightarrow{stably} \Big( \int_0^T s(t) \, dB(t), \, T \geq 0 \Big)
$$
with an independent scalar Brownian motion $(B(t), t \geq 0)$ (on an extension of the original filtered probability space).
\end{theorem}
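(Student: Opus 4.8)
The natural approach is the exponential-martingale (characteristic-function) method for stable limit theorems, using the martingale representation theorem (Proposition~\ref{prop:CBL}) to supply the asymptotic orthogonality that decouples the limit from the driving noise. Write $M_\delta(T):=\int_0^T\langle Y_\delta(t),dW(t)\rangle$; this is a continuous local martingale with $M_\delta(0)=0$ and quadratic variation $\langle M_\delta\rangle_T=\int_0^T\|Y_\delta(t)\|^2\,dt$, so that (C1) is precisely the statement $\langle M_\delta\rangle_T\stackrel{\PP}{\rightarrow}\int_0^T s^2(t)\,dt$. The first step is to reduce to a bounded setting: stopping at $\sigma_R^\delta:=\inf\{T:\langle M_\delta\rangle_T\ge R\}$ together with a companion truncation $s_R$ of $s$, one may assume $\langle M_\delta\rangle\le R$ and $\int_0^\cdot s^2\,dt\le R$, a regime in which $M_\delta$ is a genuine $L^2$-martingale and the exponential martingales below are bounded; the tightness furnished by (C1) ensures that sending $R\to\infty$ afterwards recovers the full claim.

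The representation theorem enters as follows. By Proposition~\ref{prop:CBL}, every bounded martingale $N$ on the Brownian filtration equals $N(0)+\int_0^\cdot\langle F(s),dW(s)\rangle$ for some progressively measurable $F$; hence $\langle M_\delta,N\rangle_T=\int_0^T\langle Y_\delta(t),F(t)\rangle\,dt\stackrel{\PP}{\rightarrow}0$ by hypothesis (C2), and, using the $L^2$-bound from the localisation, this convergence also holds in $L^1$. This ``asymptotic orthogonality of $M_\delta$ to all bounded martingales'' is what forces the limit to be conditionally independent of $\mathcal F_\infty$.

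Now fix $u\in\R$ and a bounded martingale $N$, and set $\mathcal E_\delta(t):=\exp(iuM_\delta(t)+\tfrac{u^2}{2}\langle M_\delta\rangle_t)$, which satisfies $d\mathcal E_\delta=iu\,\mathcal E_\delta\,dM_\delta$ by Itô's formula. Integration by parts for $N\mathcal E_\delta$ and taking expectations (the stochastic-integral parts being true martingales in the bounded regime) give
\[
\E\big[N(T)\mathcal E_\delta(T)\big]=N(0)+iu\,\E\!\int_0^T\mathcal E_\delta(t)\,d\langle M_\delta,N\rangle_t,
\]
and a further integration by parts rewrites the last term through $\mathcal E_\delta(T)\langle M_\delta,N\rangle_T$ plus a true-martingale remainder, so it vanishes as $\delta\to0$ by the $L^1$-convergence of $\langle M_\delta,N\rangle_T$. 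Thus $\E[N(T)\mathcal E_\delta(T)]\to N(0)$ for every bounded martingale $N$. Applying this identity to the (bounded, in the localised regime) random variable $N(T)\,e^{-\frac{u^2}{2}\int_0^T s^2(t)\,dt}$ in place of $N(T)$, and using (C1) to replace $\langle M_\delta\rangle_T$ by $\int_0^T s^2\,dt$ inside the exponential (dominated convergence, as $x\mapsto e^{u^2x/2}$ is bounded Lipschitz on $[0,R]$), yields
\[
\E\big[N(T)\,e^{iuM_\delta(T)}\big]\;\longrightarrow\;\E\big[N(T)\,e^{-\frac{u^2}{2}\int_0^T s^2(t)\,dt}\big].
\]

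Finally I would identify the limit and pass to the functional statement. On an extension carrying a scalar Brownian motion $B$ independent of $\mathcal F_\infty$, put $M(T):=\int_0^T s(t)\,dB(t)$ (well defined since $s$ is progressively measurable); conditioning on $\mathcal F_\infty$ shows $\bar\E[N(T)e^{iuM(T)}]=\E[N(T)e^{-\frac{u^2}{2}\int_0^T s^2\,dt}]$, matching the limit above. Since bounded martingales $N$ are a determining class for stable convergence (equivalently, by Proposition~\ref{prop:CBL}, so are bounded $\mathcal F_\infty$-measurable random variables), this gives one-dimensional stable convergence; applying the same computation to increments over a finite time partition gives all finite-dimensional distributions, and tightness of $(M_\delta)$ in $C([0,\infty))$ — via Lenglart's inequality and the control of $\langle M_\delta\rangle$ — upgrades this to the asserted functional stable convergence, after which the localisation is removed. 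I expect the principal technical burden to be exactly this localisation/truncation step: verifying that the stopped-and-truncated processes still satisfy (C1)--(C2), that all the ``true martingale'' assertions genuinely hold after stopping, and that stability of the limit is preserved in the passage $R\to\infty$; by comparison the Itô-calculus core and the use of Proposition~\ref{prop:CBL} are routine.
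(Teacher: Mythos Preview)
Your argument is essentially correct, but it takes a different route from the paper. The paper's proof is a two-line reduction to Jacod--Shiryaev, Theorem IX.7.3(b): one observes that $M_\delta(T)=\int_0^T\langle Y_\delta(t),dW(t)\rangle$ is a continuous local martingale with quadratic variation $C_\delta(T)=\int_0^T\|Y_\delta(t)\|^2\,dt$, so that the cited theorem applies once one checks (i) $C_\delta(T)\stackrel{\PP}{\rightarrow}\int_0^T s^2(t)\,dt$, which is (C1), and (ii) $\langle M_\delta,N\rangle_T\stackrel{\PP}{\rightarrow}0$ for all bounded c\`adl\`ag martingales $N$ with $N(0)=0$. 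Condition (ii) is obtained exactly as you do, by writing $N(T)=\int_0^T\langle F(t),dW(t)\rangle$ via Proposition~\ref{prop:CBL} and invoking (C2). So you and the paper agree completely on the role of the representation theorem; the difference is that you then re-prove the stable CLT for continuous martingales by hand (exponential martingales, Fourier test functions, localisation, tightness), whereas the paper simply quotes it as a black box.

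Your approach is self-contained and makes transparent \emph{why} asymptotic orthogonality to all bounded martingales yields an \emph{independent} limiting Brownian motion. The price is precisely the technical burden you already identify: the localisation at level $R$ (to make $\mathcal E_\delta$ bounded and the stochastic integrals genuine martingales), the uniform-integrability step needed to upgrade $\langle M_\delta,N\rangle_T\stackrel{\PP}{\rightarrow}0$ to $L^1$ (Kunita--Watanabe plus $\langle M_\delta\rangle_T\le R$ does give this, but it should be said), and the passage from one-dimensional to functional stable convergence via tightness. Two phrasing points to tighten: first, your ``further integration by parts'' for $\int_0^T\mathcal E_\delta\,d\langle M_\delta,N\rangle$ should be written as $\mathcal E_\delta(T)\langle M_\delta,N\rangle_T-\int_0^T\langle M_\delta,N\rangle_t\,d\mathcal E_\delta(t)$, and one must check that the second integral is a true martingale (it is, since $|\langle M_\delta,N\rangle_t|\le R^{1/2}\langle N\rangle_T^{1/2}\in L^2$ and $|\mathcal E_\delta|\le e^{u^2R/2}$); second, when you ``apply the identity to $N(T)e^{-\frac{u^2}{2}\int_0^T s^2\,dt}$'', what you are really using is that bounded $\mathcal F_T$-measurable random variables $Z$ satisfy $\E[Z\,\mathcal E_\delta(T)]\to\E[Z]$, which follows because such $Z$ arise as terminal values of bounded martingales. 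None of these are gaps, but a final write-up should make them explicit.
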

\begin{proof}
Since $M_\delta(T) := \int_0^T \left\langle Y_\delta(t), dW(t) \right\rangle$ is a continuous martingale with quadratic variation $C_\delta(T)=\int_0^T \| Y_\delta(t) \|^2 \, dt$, we can apply Theorem IX.7.3(b) in \cite{JS} with the trivial processes $Z_t = 0$, $B_t = 0$ (in that Theorem), so that it remains to check for all $T$:
\begin{enumerate}
\item[(i)] $C_\delta(T) \stackrel{\mathbb P}{\rightarrow} \int_0^T s^2(t) \, dt$,
\item[(ii)] $\left\langle M_\delta, N \right\rangle_T \stackrel{\mathbb P}{\rightarrow} 0$ for all bounded c\`adl\`ag-martingales $N$ on $(\Omega,{\scr F},({\scr F}_t)_{t \geq 0}, \mathbb P)$ with $N(0) = 0$.
\end{enumerate}

Condition (i) is satisfied by assumption (C1). For condition (ii) we use the Brownian martingale representation from Proposition \ref{prop:CBL}  to represent $N(T) = \int_0^T \left\langle F(t), dW(t) \right\rangle$ with progressively measurable coordinates $(F_k(t), t \geq 0)_{k \geq 1}$ and $\sum_{k=1}^{\infty} \int_0^T \mathbb E F_k^2(t) \, dt < \infty$ for all $T \geq 0$. Then $\left\langle M_{\delta}, N \right\rangle_T = \int_0^T \left\langle Y_\delta(t), F(t) \right\rangle \, dt$ holds and condition (ii) follows from Assumption (C2).
\end{proof}

\begin{corollary} \label{coroll:supp}
Theorem \ref{thm:stclt} holds for $H=L^2(\Lambda)$ if condition (C2) is replaced by the following support condition:
\begin{itemize}
\item[(C2')] There exist deterministic Borel sets $A(\delta)\subset [0,T] \times \Lambda$ with $\supp(Y_{\delta'}) \subset A(\delta)$ Lebesgue-almost everywhere for all $0<\delta' \leq \delta$ and $\lambda (A(\delta)) \rightarrow 0$ as $\delta \rightarrow 0$, where $\lambda$ denotes Lebesgue measure on $[0,T] \times \Lambda$.
\end{itemize}
\end{corollary}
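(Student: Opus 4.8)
The plan is to show that the support condition (C2') implies the original condition (C2), after which Theorem \ref{thm:stclt} applies verbatim. Inspecting the proof of Theorem \ref{thm:stclt}, condition (C2) enters only in step (ii), and there only through the process $F$ produced by the martingale representation Proposition \ref{prop:CBL}, which satisfies $\int_0^T\|F(t)\|^2\,dt=\sum_{k\ge1}\int_0^T F_k^2(t)\,dt<\infty$ $\PP$-a.s. It therefore suffices to check, assuming (C1) and (C2'), that $\int_0^T\langle Y_\delta(t),F(t)\rangle\,dt\xrightarrow{\PP}0$ for every progressively measurable $H$-valued $F$ with $\int_0^T\|F(t)\|^2\,dt<\infty$ $\PP$-a.s.

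First I would localise the inner product to the shrinking set. For the Borel set $A(\delta)\subset[0,T]\times\Lambda$, write $A(\delta)_t:=\{x\in\Lambda:(t,x)\in A(\delta)\}$ for its time-$t$ section. Since $\supp(Y_\delta)\subset A(\delta)$ up to a $\lambda$-null set, Fubini's theorem shows that for Lebesgue-a.e.\ $t$ the spatial support of $Y_\delta(t,\cdot)$ lies in $A(\delta)_t$ up to a $\Lambda$-null set, so that $\langle Y_\delta(t),F(t)\rangle=\langle Y_\delta(t),\mathbf{1}_{A(\delta)_t}F(t)\rangle$ for a.e.\ $t$. Applying the Cauchy--Schwarz inequality first on $\Lambda$ and then on $[0,T]$, and using Fubini once more, gives
\[
\Big|\int_0^T\langle Y_\delta(t),F(t)\rangle\,dt\Big|\le\Big(\int_0^T\|Y_\delta(t)\|^2\,dt\Big)^{1/2}\Big(\int_{A(\delta)}F(t,x)^2\,d\lambda(t,x)\Big)^{1/2}.
\]

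Then I would control the two factors separately. By (C1) the first factor converges in probability to $(\int_0^T s^2(t)\,dt)^{1/2}$ and is in particular $O_{\PP}(1)$. For the second factor, fix an $\omega$ for which $\int_{[0,T]\times\Lambda}F(\omega)^2\,d\lambda<\infty$; the finite measure $B\mapsto\int_B F(\omega)^2\,d\lambda$ is absolutely continuous with respect to $\lambda$, and since $\lambda(A(\delta))\to0$ as $\delta\to0$ we obtain $\int_{A(\delta)}F(\omega)^2\,d\lambda\to0$. Hence the second factor tends to $0$ $\PP$-a.s., a fortiori in probability, so the product tends to $0$ in probability; this is precisely (C2). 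Invoking Theorem \ref{thm:stclt} with (C1) and (C2) now in force completes the proof.

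I do not expect a genuine obstacle: the argument is essentially soft. The only points needing a little care are the Fubini reduction, which lets the support hypothesis be used section by section in $t$, and the observation that one only ever needs (C2) for square-integrable $F$, so that absolute continuity of the Lebesgue integral can be invoked for $\PP$-a.e.\ fixed $\omega$. Note also that only $\supp(Y_\delta)\subset A(\delta)$ and $\lambda(A(\delta))\to0$ are used; the nestedness built into (C2') is not needed here but is convenient in applications such as Proposition \ref{prop:StableCLT}, where one takes $A(\delta)=[0,T]\times\supp(K_{\delta,x_0})$.
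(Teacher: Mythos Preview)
Your proof is correct and follows the same core idea as the paper: verify (C2) via Cauchy--Schwarz and the fact that the $L^2$-mass of $F$ on the shrinking support $A(\delta)$ vanishes. The paper's execution differs slightly: it introduces $F_\delta = F\mathbf{1}_{A(\delta)^C}$ and runs a two-parameter argument $\liminf_{\delta\to0}\lim_{\delta'\to0}$, exploiting the nestedness $\supp(Y_{\delta'})\subset A(\delta)$ for $\delta'\le\delta$ so that $\langle Y_{\delta'},F_\delta\rangle\equiv 0$, and then lets $F_\delta\to F$ in $L^2$. Your single-parameter version is more direct, makes explicit the absolute-continuity reason why $\int_{A(\delta)}F^2\,d\lambda\to0$, and correctly observes that the nestedness clause in (C2') is not actually needed for the argument. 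You also make explicit the restriction to square-integrable $F$, which is implicit in the paper's proof (its dominated-convergence step requires it) and is indeed all that is needed in the proof of Theorem~\ref{thm:stclt}.
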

\begin{proof}
Set $F_{\delta}(t) = F(t) \mathbf 1_{A(\delta)^C}$ for $F(\cdot)$ in condition (C2) of Theorem \ref{thm:stclt}. Then by $\lambda(A(\delta)) \rightarrow 0$ and  dominated convergence, $F_{\delta} \rightarrow F$ holds in $L^2([0,T] \times \Lambda)$ for each $\omega \in \Omega$. Clearly, for each $\delta>0$ the support property gives
$$
\int_0^T \left\langle Y_{\delta'}(t), F_{\delta}(t) \right\rangle \, dt \stackrel{\mathbb P}{\rightarrow} 0 \text{ as } \delta' \rightarrow 0.
$$
By the triangle and the Cauchy-Schwarz inequality and using condition (C1), we obtain
\begin{align*}
&\lim_{\delta' \rightarrow 0} \Big| \int_0^T \left\langle Y_{\delta'}(t), F(t) \right\rangle, dt \Big|  \\
&\leq \liminf_{\delta \rightarrow 0} \lim_{\delta' \rightarrow 0} \Big( \Big| \int_0^T \left\langle Y_{\delta'}(t), F(t)-F_{\delta}(t) \right\rangle \, dt \Big| + \Big| \int_0^T \left\langle Y_{\delta'}(t), F_{\delta}(t) \right\rangle \, dt \Big| \Big) \\
&\leq \lim_{\delta' \rightarrow 0} \Big( \int_0^T \| Y_{\delta'}(t) \|^2 \, dt \Big)^{1/2} \liminf_{\delta \rightarrow 0} \Big( \int_0^T \| F(t) - F_{\delta}(t) \|^2 \, dt \Big)^{1/2} + 0 \\
&= \Big( \int_0^T s^2(t) \, dt \Big)^{1/2} \times 0 = 0
\end{align*}
with convergence in probability.
\end{proof}

\section*{Acknowledgment}
We are grateful to Randolf Altmeyer for very helpful discussions, in particular bringing up the ideas for Proposition \ref{prop:K}. Insightful comments and questions by Gregor Pasemann, Eric Ziebell, Pavel K\v{r}\'i\v{z} and two anonymous referees have lead to several improvements.
This research has been funded by Deutsche Forschungsgemeinschaft (DFG) - SFB1294/2 - 318763901.

%

\end{document}